\newcommand{\diag}{\mathrm{diag}}
\newcommand{\symcofree}{S^c}
\newcommand{\cofree}{T^c}
\newcommand{\coTHH}{\mathrm{coTHH}}
\newcommand{\THH}{\mathrm{THH}}
\newcommand{\sma}{\wedge}
\newcommand{\coker}{\mathrm{coker}\:}
\newcommand{\eq}{\mathrm{eq}}
\newcommand{\res}{\mathrm{res}}
\newcommand{\coHH}{\mathrm{coHH}}
\newcommand{\HH}{\mathrm{HH}}
\newcommand{\cotor}{\mathrm{Cotor}}
\newcommand{\tor}{\mathrm{Tor}}
\newcommand{\holim}{\mathrm{holim}}
\newcommand{\colim}{\operatorname*{colim}}
\DeclareMathOperator{\Tot}{Tot}
\providecommand{\abs}[1]{\lvert#1\rvert}
\newcommand{\Spec}{\mathrm{Spec}}
\newcommand{\dgm}{\mathrm{dgmod}}
\newcommand{\D}{\mathcal{D}}
\newcommand{\coalgD}{\mathrm{Coalg}_{\D}}
\newcommand{\comcoalgD}{\mathrm{comCoalg}_{\D}}
\newcommand{\coalgDnil}{\mathrm{Coalg}_{\D}^{\mathrm{conil}}}
\newcommand{\comcoalgDnil}{\mathrm{comCoalg}_{\D}^{\mathrm{conil}}}
\newcommand{\comcoalg}{\mathrm{comCoalg}}
\newcommand{\Set}{\mathrm{Set}}
\newcommand{\sSet}{\mathrm{sSet}}
\newcommand{\ckmod}{\mathrm{ckmod}}
\newcommand{\skmod}{\mathrm{skmod}}
\newcommand{\cha}{\mathrm{char}}
\newcommand{\op}{\mathrm{op}}
\newcommand{\Hom}{\mathrm{Hom}}
\newcommand{\one}{1}
\newcommand{\ra}{\rightarrow}
\def\ie{\emph{i.e.\,\,}}
\def\eg{\emph{e.g.\,\,}}
\newcommand{\F}{\mathbb{F}}
\newcommand{\inv}{{-1}}
\newcommand{\xto}[1]{\xrightarrow{#1}}
\newcommand{\fr}{\mathrm{fr}}
\newcommand{\sh}{\mathrm{sh}}
\newcommand{\sgn}{\mathrm{sgn}}
\newcommand{\sphere}{\mathbb{S}}
\newcommand{\comult}{\triangle}
\newcommand{\sk}{\mathrm{sk}}
\newcommand{\Map}{\mathrm{Map}}
\newcommand{\id}{\mathrm{id}}
\theoremstyle{plain}   
\newtheorem{thm}{Theorem}[section] 
\newtheorem{cor}[thm]{Corollary}
\newtheorem{lemma}[thm]{Lemma}
\newtheorem{prop}[thm]{Proposition}
\newtheorem{thrm}[thm]{Theorem}
\theoremstyle{definition}
\newtheorem{defn}[thm]{Definition}
\theoremstyle{remark}
\newtheorem{rem}[thm]{Remark}
\newtheorem{ex}[thm]{Example}
\let\oldmarginpar\marginpar
\renewcommand\marginpar[1]{\-\oldmarginpar[\raggedleft\footnotesize #1]%
{\raggedright\footnotesize #1}}
\newcommand*{\relrelbarsep}{.386ex}
\newcommand*{\relrelbar}{%
  \mathrel{%
    \mathpalette\@relrelbar\relrelbarsep
  }%
}
\newcommand*{\@relrelbar}[2]{%
  \raise#2\hbox to 0pt{$\m@th#1\relbar$\hss}%
  \lower#2\hbox{$\m@th#1\relbar$}%
}
\providecommand*{\rightrightarrowsfill@}{%
  \arrowfill@\relrelbar\relrelbar\rightrightarrows
}
\providecommand*{\leftleftarrowsfill@}{%
  \arrowfill@\leftleftarrows\relrelbar\relrelbar
}
\providecommand*{\xrightrightarrows}[2][]{%
  \ext@arrow 0359\rightrightarrowsfill@{#1}{#2}%
}
\providecommand*{\xleftleftarrows}[2][]{%
  \ext@arrow 3095\leftleftarrowsfill@{#1}{#2}%
}
\title{Computational Tools for  Topological coHochschild Homology}
\author[Bohmann]{Anna Marie Bohmann}
\author[Gerhardt]{Teena Gerhardt}
\author[{H\o genhaven}]{Amalie {H\o genhaven}}
\author[Shipley]{Brooke Shipley}
\author[Ziegenhagen]{Stephanie Ziegenhagen}
\address{Department of Mathematics, Vanderbilt University, 1326 Stevenson Center, Nashville, TN, 37240, USA}
\email{am.bohmann@vanderbilt.edu}
\address{Department of Mathematics, Michigan State University, 619 Red Cedar Road, East Lansing, MI, 48824, USA}
\email{teena@math.msu.edu}
\address{Department of Mathematics, Copenhagen University, Universitetsparken 5, Copenhagen, Denmark}
\email{hogenhaven@math.ku.dk}
\address{Department of Mathematics, Statistics, and Computer Science, University of Illinois at
Chicago, 508 SEO m/c 249,
851 S. Morgan Street,
Chicago, IL, 60607-7045, USA}
    \email{shipleyb@uic.edu}
\address{KTH Royal Institute of Technology, Department of Mathematics, SE-100 44 Stockholm, Sweden}
\email{szie@kth.se}
\date{\today}
\keywords{Topological Hochschild homology, coalgebra, Hochschild--Kostant--Rosenberg}
\subjclass[2010]{Primary:{ 19D55, 16T15; Secondary: 16E40, 55U35, 55P43 }}
\begin{document}

\begin{abstract}
{In recent work, Hess and Shipley \cite{hs.coTHH} defined a theory of topological coHochschild homology (coTHH) for coalgebras. In this paper we develop computational tools to study this new theory. In particular, we prove a Hochschild-Kostant-Rosenberg type theorem in the cofree case for differential graded coalgebras. We also develop a coB\"okstedt spectral sequence to compute the homology of coTHH for coalgebra spectra. We use a coalgebra structure on this spectral sequence to produce several computations.}
\end{abstract}
\maketitle

 \section{Introduction}
The theory of Hochschild homology for algebras has a topological analogue, called topological Hochschild homology (THH). Topological Hochschild homology for ring spectra is defined by changing the ground ring in the ordinary Hochschild complex for rings from the integers to the sphere spectrum. For coalgebras, there is a theory dual to Hochschild homology called coHochschild homology. Variations of coHochschild homology for classical coalgebras, or corings, appear  in \cite[Section 30]{brzeznski-wisbauer}, and \cite{doi}, for instance, and the coHochschild complex for differential graded coalgebras appears in \cite{hps}. In recent work \cite{hs.coTHH}, Hess and Shipley define a topological version of coHochschild homology (coTHH), which is dual to topological Hochschild homology.

 In this paper we develop computational tools for $\coTHH$.  
We begin by giving a very general definition for $\coTHH$ in a  model category endowed with a symmetric monoidal structure in terms of the homotopy limit of a cosimplicial object.  This level of generality makes concrete calculations difficult;  nonetheless, we give more accessible descriptions of $\coTHH$ in an assortment of algebraic contexts. 

One of the starting points in understanding Hochschild homology is the Hoch\-schild\---Kostant--Rosenberg theorem, which, in its most basic form, identifies the Hochschild homology of free commutative algebras. See  \cite{hochschild-kostant-rosenberg} for the classical result and  \cite{mccarthy-minasian} for an analogue in the category of spectra. The set up for coalgebras is not as straightforward as in the algebra case and we return to the general situation to analyze the ingredients necessary for defining an appropriate notion of cofree coalgebras.  In Theorem \ref{thrm:coTHHcofreeinsymmonoidalmodelcat} we conclude that the Hochschild--Kostant--Rosenberg theorem for cofree coalgebras in an arbitrary model category boils down to an analysis of the interplay between the notion of ``cofree'' and homotopy limits.  
We then prove our first computational result,  a Hochschild--Kostant--Rosenberg theorem for  cofree differential graded coalgebras. 
A similar result has been obtained by Farinati and Solotar in \cite{farinati-solotar-ext} and \cite{farinati-solotar} in the ungraded setting.

 \begin{thrm} \label{thm:HKR} Let $X$ be a nonnegatively graded cochain complex over a field $k$.  Let $\symcofree(X)$ be the cofree coaugmented cocommutative coassociative conilpotent coalgebra over $k$ cogenerated by $X$.
Then there is a quasi-isomorphism of differential graded $k$-modules
\[
\coTHH(\symcofree(X)) \simeq \Omega^{\symcofree(X)\vert k}.\]
where the right hand side is an explicit differential graded $k$-module defined in Section \ref{sect:categoricalhochschildhomology}.
\end{thrm}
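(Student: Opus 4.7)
The plan is to apply Theorem~\ref{thrm:coTHHcofreeinsymmonoidalmodelcat}, which reduces the HKR statement to an explicit comparison between $\coTHH(\symcofree(X))$---modeled as the totalization of the standard coHochschild cosimplicial dg module
\[
[n] \longmapsto \symcofree(X)^{\otimes n+1},
\]
with cofaces built from the comultiplication and counit on $\symcofree(X)$---and the dg module $\Omega^{\symcofree(X)\vert k}$ constructed in Section~\ref{sect:categoricalhochschildhomology}. My first step is to write both sides down concretely using the standard description of cofree cocommutative conilpotent coalgebras on $X$ over a field as the symmetric invariants $\bigoplus_{w\ge 0}(X^{\otimes w})^{\Sigma_w}$ equipped with the deshuffle comultiplication.

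The crucial structural observation is the \emph{weight} grading. Since the comultiplication on $\symcofree(X)$ preserves total tensor power in $X$, the cosimplicial dg module $\symcofree(X)^{\otimes \bullet+1}$ splits as a direct sum over $w\ge 0$ of sub-cosimplicial objects of fixed weight $w$. After passing to the normalized complex, each weight-$w$ piece has all ``interior'' factors lying in the coaugmentation ideal $\bigoplus_{v\ge 1}(X^{\otimes v})^{\Sigma_v}$, so it is concentrated in cosimplicial degrees $\le w$. Its totalization is therefore a finite double complex with no convergence issues, and $\coTHH(\symcofree(X))$ decomposes as a direct sum of finite computations indexed by $w$. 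In parallel, $\Omega^{\symcofree(X)\vert k}$ carries its own weight grading, and the quasi-isomorphism will be established weight by weight.

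The technical heart of the proof is this weight-by-weight comparison. I would construct a natural comparison map between the normalized totalization and $\Omega^{\symcofree(X)\vert k}$ using the universal property of co-K\"ahler differentials, and then identify, in each weight $w$, the normalized coHochschild complex with a Koszul-type complex whose cohomology concentrates in the expected single cosimplicial degree. This is precisely dual to the classical HKR picture, in which $\HH_\ast(\mathrm{Sym}\,V)$ is computed via the Koszul resolution of $k$ over $\mathrm{Sym}\,V$ and collapses onto $\Lambda^\ast V$; here the reduced part of $\symcofree(X)$ plays the role of $V$, and the cohomology in weight $w$ should pick out the expected exterior-type piece of $\Omega^{\symcofree(X)\vert k}$.

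The main obstacle is matching conventions---signs, shifts, and the explicit form of the comparison map---between the totalization and the construction of $\Omega^{\symcofree(X)\vert k}$, and verifying the Koszul-type acyclicity in each weight. These are essentially combinatorial, but care is required because $X$ is an arbitrary nonnegatively graded cochain complex rather than a complex concentrated in degree zero, so the internal differential on $X$ must be tracked alongside the cosimplicial coface maps; one needs to confirm that the comparison respects both differentials and that the resulting weight-graded spectral sequence degenerates as predicted.
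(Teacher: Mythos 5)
Your weight-grading observation is correct and constitutes a genuinely different organizing principle from the paper's proof: since the deconcatenation comultiplication on $\symcofree(X)=\bigoplus_{w\ge 0}(X^{\otimes w})^{\Sigma_w}$ preserves total tensor weight, the cosimplicial object $\coTHH^\bullet_\otimes(\symcofree(X))$ does split by weight, the weight-$w$ piece of the normalized complex is concentrated in cosimplicial degrees $\le w$ (all interior factors lie in the coaugmentation coideal), and because the total complex is a finite product in each total degree the totalization splits accordingly. The paper takes a different route: it reduces to a single cogenerator via $\symcofree(X_1\times\dotsb\times X_n)\cong\symcofree(X_1)\otimes\dotsb\otimes\symcofree(X_n)$ and the dual Eilenberg--Zilber map, handles infinite-dimensional $X$ by the filtered-colimit Lemma~\ref{lem:infDimCoalg}, and passes from trivial to arbitrary internal differential by a row-filtration of the double complex. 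Your route, if completed, would be the coalgebraic dual of the classical Koszul-resolution proof of HKR, which is a reasonable alternative.

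The gap is that the technical heart --- the weight-by-weight identification --- is asserted rather than proved, and the one concrete claim you make about it is false as stated: in weight $w$ the cohomology of the normalized coHochschild complex does \emph{not} concentrate in a single cosimplicial degree. The target $\Omega^{\symcofree(X)\vert k}$ in total weight $w$ spreads over cosimplicial degrees $0,1,\dotsc,w$ (the factor $U(\symcofree(\Sigma^{-1}X))$, respectively $\Lambda(\Sigma^{-1}X)$, contributes cosimplicial degree equal to its own weight), so the acyclicity statement you need must be formulated for a finer bigrading, and it is exactly there that real work is required. Moreover, because $\symcofree$ is built from symmetric \emph{invariants} (divided powers) rather than coinvariants, the ``dual Koszul'' acyclicity is not a formality and genuinely depends on the characteristic --- witness the fact that the answer is $U(\symcofree(X))\otimes\Lambda(\Sigma^{-1}X)$ in characteristic $2$ but $U(\symcofree(X))\otimes U(\symcofree(\Sigma^{-1}X))$ otherwise; the paper handles this by an explicit Cotor computation \`a la Doi with the small injective resolution $S^c(X)\to S^c(X)\otimes S^c(X)\to S^c(X)\otimes X\otimes S^c(X)\to 0$ in the divided-power case and a direct computation of the normalized complex in the exterior case. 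Finally, the universal property of co-K\"ahler differentials that you invoke to build the comparison map is only identified with $\Omega^{\symcofree(X)\vert k}$ in the ungraded, characteristic $\ne 2$ setting (Remark~\ref{rem:KaehlerFS}), so it cannot be used off the shelf to define the map for a general nonnegatively graded $X$. Until the weight-$w$ computation is actually carried out in both characteristic cases, the outline does not establish the theorem.
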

The precise definition of $\Omega^{\symcofree(X)\vert k}$ depends on the characteristic of $k$. Let $U(\symcofree(X))$ denote the underlying differential graded $k$-modules of the cofree coalgebra $\symcofree(X)$. If $\cha(k) \neq 2$, we can compute $\coTHH(\symcofree(X))$ as $U(\symcofree(X)) \otimes U(\symcofree(\Sigma^{-1} X))$, while in characteristic $2$, we have $\coTHH(\symcofree(X)) \cong U(\symcofree(X)) \otimes \Lambda (\Sigma^{-1}X)$, where $\Lambda$ denotes exterior powers. 
Definitions of all of the terms here appear in Sections \ref{sect:definitions} and \ref{sect:categoricalhochschildhomology} and this theorem is proved as Theorem \ref{thrm:HKRcofreedg}.

We then develop calculational tools to study $\coTHH(C)$ for a coalgebra spectrum $C$ in a symmetric monoidal model category of spectra; see Section~\ref{sect:thespectralsequence}. Recall that for topological Hochschild homology, the B\"okstedt spectral sequence is an essential computational tool. For a field $k$ and a ring spectrum $R$, the B\"okstedt spectral sequence is of the form
\[
E^2_{*,*} = \HH_*(H_*(R; k)) \Rightarrow H_*(\THH(R); k).
\]
This spectral sequence arises from the skeletal filtration of the simplicial spectrum $\THH(R)_{\bullet}$. Analogously, for a coalgebra spectrum $C$, we consider the Bousfield--Kan spectral sequence arising from the cosimplicial spectrum $\coTHH^\bullet(C)$. We call this the coB\"okstedt spectral sequence. We identify the $E_2$-term in this spectral sequence and see that, as in the $\THH$ case, the $E_2$-term is given by a classical algebraic invariant:

\begin{thm}\label{thm:coBoekstedt} Let $k$ be a field. Let $C$ be a coalgebra spectrum that is cofibrant as an underlying spectrum. The Bousfield--Kan spectral sequence for $\coTHH(C)$ gives a  {\em coB\"okstedt spectral sequence}  with  $E_2$-page
\[ E_2^{s, t}=\coHH_{s,t}^{k}(H_*(C;k)),\]
that abuts to
\[H_{t-s}(\coTHH(C);k).\]
\end{thm}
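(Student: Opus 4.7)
The plan is to apply the standard Bousfield--Kan spectral sequence for the totalization of a cosimplicial spectrum to the cosimplicial object $\coTHH^\bullet(C)$ defining $\coTHH(C)$. Recall that $\coTHH^n(C) = C^{\sma(n+1)}$, with cofaces induced by the comultiplication $\Delta\colon C \to C \sma C$ and codegeneracies induced by the counit $\epsilon\colon C \to \sphere$, and that $\coTHH(C) = \holim_{\Delta} \coTHH^\bullet(C)$ after an appropriate Reedy--fibrant replacement. Smashing levelwise with the Eilenberg--MacLane spectrum $Hk$ produces a cosimplicial spectrum whose totalization computes $\coTHH(C) \sma Hk$, and the Bousfield--Kan spectral sequence for its totalization takes the general form
\[
E_2^{s,t} = \pi^s\bigl(\pi_t(\coTHH^\bullet(C) \sma Hk)\bigr) \Rightarrow \pi_{t-s}\bigl(\coTHH(C) \sma Hk\bigr) = H_{t-s}(\coTHH(C); k),
\]
where $\pi^s$ denotes the $s$-th cohomotopy of the cosimplicial abelian group obtained by applying $\pi_t$ termwise.

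To identify the $E_2$-page with coHochschild homology, I would invoke the Künneth theorem over the field $k$. Since $C$ is cofibrant as a spectrum, smashing with $Hk$ and iterating gives a natural isomorphism
\[
H_*(C^{\sma(n+1)}; k) \cong H_*(C; k)^{\otimes (n+1)}.
\]
The maps $\Delta$ and $\epsilon$ endow $H_*(C;k)$ with the structure of a graded coassociative counital coalgebra over $k$, and a diagram chase shows that under the Künneth isomorphism the cofaces and codegeneracies on $H_t(\coTHH^\bullet(C); k)$ agree with the standard cosimplicial structure on $H_*(C;k)^{\otimes(\bullet+1)}$ computing coHochschild homology. Taking $\pi^s$ of this cosimplicial graded $k$-module then yields
\[
\pi^s H_t(\coTHH^\bullet(C); k) = \coHH_{s,t}^{k}\bigl(H_*(C;k)\bigr),
\]
as required.

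The main obstacle will be convergence: Bousfield--Kan spectral sequences of cosimplicial spectra do not converge strongly without additional hypotheses, and one must verify that the spectral sequence genuinely abuts to $H_*(\coTHH(C); k)$ and not to the homology of some $k$-completion. I would address this by using the cofibrancy hypothesis on $C$ and a Reedy--fibrant replacement of $\coTHH^\bullet(C) \sma Hk$ to bring the standard convergence criteria for the cosimplicial Bousfield--Kan spectral sequence to bear; in practice a connectivity bound on the cosimplicial degrees suffices. A smaller but necessary bookkeeping step is verifying that the Künneth identification is cosimplicial on the nose, which reduces to checking compatibility with $\Delta$ and $\epsilon$ using coassociativity and counit axioms. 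Together these yield the stated coB\"okstedt spectral sequence with the identified $E_2$-page and abutment.
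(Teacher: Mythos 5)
Your proposal follows essentially the same route as the paper: the Bousfield--Kan spectral sequence of the $\Tot$ tower of a Reedy fibrant replacement of $\coTHH^{\bullet}(C)\wedge Hk$, with the K\"unneth isomorphism over the field $k$ identifying the $E_1$-page with the conormalized coHochschild complex of $H_*(C;k)$ and hence $E_2^{s,t}$ with $\coHH^{k}_{s,t}(H_*(C;k))$. One caution: your early assertion that the levelwise smash with $Hk$ totalizes to $\coTHH(C)\wedge Hk$ is not automatic---smashing does not commute with the homotopy limit in general---and the paper isolates the comparison map $\Tot(R\coTHH^{\bullet}(C))\wedge Hk \to \Tot R(\coTHH^{\bullet}(C)\wedge Hk)$ as a separate hypothesis in its convergence corollaries rather than as part of this theorem; since you flag exactly this issue in your final paragraph, it is a matter of ordering rather than a gap.
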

As one would expect with a Bousfield--Kan spectral sequence, the coB\"okstedt spectral sequence does not always converge. However, we identify conditions under which it converges {completely}. In particular, if the coalgebra spectrum $C$ is a suspension spectrum of a simply connected space $X$, denoted $\Sigma_+^{\infty}X$, the coB\"okstedt spectral sequence converges {completely} to $H_*(\coTHH(\Sigma_+^{\infty}X); k)$ {if a Mittag-Leffler condition is satisfied.}

Further, we prove that for $C$ connected and cocommutative this is a spectral sequence of coalgebras. Using this additional algebraic structure we prove several computational results, including the following. The divided power coalgebra $\Gamma_k[-]$ and the exterior coalgebra $\Lambda_k(-)$ are introduced in detail in Section~\ref{sect:computations}. Note that if $X$ is a graded $k$-vector space concentrated in even degrees and with basis $ ( x_i )_{i\in \mathbb{N}}$, there are identifications 
$\symcofree(X) \cong \Gamma_k[x_1,x_2,\dots]$
and 
$\Omega^{S^c(X) \vert k} \cong U(\Gamma_k[x_1,x_2,\dots]) \otimes U(\Lambda_k[z_1,z_2,\dots])$. We improve Theorem \ref{thm:HKR} in Proposition \ref{cohhcomp} by showing that under the above conditions on $X$ there is indeed an isomorphism of coalgebras
$$\coHH(S^c(X)) \cong \Gamma_k[x_1,x_2,\dots]) \otimes \Lambda_k[z_1,z_2,\dots].$$ Hence Theorem \ref{thm:coBoekstedt} and the coalgebra structure on the spectral sequence yield the following result.

\begin{thrm}\label{comp1} Let $C$ be a cocommutative coassociative coalgebra spectrum that is cofibrant as an underlying spectrum, and whose homology coalgebra is  
\[H_*(C;k) = \Gamma_{k}[x_1,x_2,\dots],\] where the $x_i$ are cogenerators in nonnegative even degrees,
 and there are only finitely many cogenerators in each degree. Then the coB\"okstedt spectral sequence for $C$ collapses at $E_2$, and 
\[ E_2\cong E_{\infty} \cong \Gamma_{k}[x_1, x_2, \dots] \otimes \Lambda_{k}(z_1, z_2, \dots) ,\]
with $x_i$ in degree $(0, \deg(x_i))$ and $z_i$ in degree $(1, \deg(x_i))$. 
\end{thrm}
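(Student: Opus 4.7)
The plan is to identify the $E_2$-page as a cofree coalgebra and then exploit the coalgebra structure on the spectral sequence, together with a bidegree count, to force all higher differentials to vanish.

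First I would apply Theorem~\ref{thm:coBoekstedt} to $C$. Writing $H_*(C;k) = \Gamma_k[x_1, x_2, \dots] = \symcofree(X)$, where $X$ is the graded $k$-vector space spanned by the cogenerators in nonnegative even degrees, and combining with the coalgebra identification of $\coHH(\symcofree(X))$ supplied by Proposition~\ref{cohhcomp}, one obtains
\[
E_2 \cong \Gamma_k[x_1, x_2, \dots] \otimes \Lambda_k(z_1, z_2, \dots),
\]
with $x_i$ in bidegree $(0, \deg x_i)$ and $z_i$ in bidegree $(1, \deg x_i)$, matching the claimed $E_\infty$ abstractly.

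Since $C$ is cocommutative coassociative and connected (the latter following from the hypothesis on $H_*(C;k)$), the earlier results in Section~\ref{sect:thespectralsequence} equip the coB\"okstedt spectral sequence with the structure of a spectral sequence of coalgebras, so every differential $d_r$ is a coderivation of bidegree $(r, r-1)$. The decisive structural observation I would then use is that $E_2$ is the cofree coaugmented cocommutative conilpotent graded $k$-coalgebra $\symcofree(V)$ on the bigraded vector space $V = \bigoplus_i k\langle x_i \rangle \oplus \bigoplus_i k\langle z_i \rangle$, with the $x_i$'s in even total degree and the $z_i$'s in odd total degree. By the universal property of $\symcofree$ in the coaugmented conilpotent setting, any coaugmented coderivation $d$ on $\symcofree(V)$ is uniquely determined by the composition $\pi \circ d\colon \symcofree(V) \to V$ with the projection onto cogenerators.

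The collapse is then immediate: $V$ is concentrated in $s$-degrees $0$ and $1$, whereas $d_r$ shifts the $s$-grading by $r \geq 2$, so $\pi \circ d_r$ must vanish purely for bidegree reasons, forcing $d_r = 0$. Iterating, each subsequent $E_r$ coincides with $E_2$ as a cofree coalgebra, so the same obstruction kills all higher differentials and we obtain $E_2 = E_\infty$ with the claimed coalgebra structure. The main technical obstacle I anticipate is pinning down the cofree-ness identification of $E_2$ as a bigraded coalgebra and making the universal property for coaugmented coderivations precise in this bigraded setting; once those are nailed down, the bidegree count is essentially formal.
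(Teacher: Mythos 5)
Your overall strategy is exactly the paper's: identify $E_2$ via Proposition~\ref{cohhcomp}, use the coalgebra structure from Theorem~\ref{sscoalgebra} so that each $d_r$ is a coderivation, and kill the differentials by showing that their corestriction to the cogenerators vanishes for bidegree reasons. However, there is a genuine gap in the step where you assert that $E_2$ \emph{is} the cofree cocommutative conilpotent coalgebra $\symcofree(V)$ on the span of the $x_i$ and $z_i$. This is true when $\cha(k)\neq 2$ (an exterior coalgebra on odd cogenerators is cofree cocommutative in that case), but it fails in characteristic $2$: there $\symcofree$ of any graded vector space is a divided power coalgebra, so the exterior factor $\Lambda_k(z_1,z_2,\dots)$ is \emph{not} cofree on the $z_i$, and your appeal to the universal property of $\symcofree$ has nothing to apply to. The paper's proof gets around this by introducing the strictly weaker notion of a conilpotent coalgebra being \emph{cogenerated} by $Y$, meaning the composite $C\xto{(\comult^n)_n}\cofree(C)\xto{\cofree(\pi)}\cofree(Y)$ is injective; it then checks that $\Gamma_k[x_1,\dots]\otimes\Lambda_k(z_1,\dots)$ is cogenerated by $\lbrace x_i, z_i\rbrace$ in every characteristic, which is all the degree argument needs.

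A second, related point: the "universal property for coaugmented coderivations" that you defer as a technical obstacle is really the crux, and it is not literally the universal property of the right adjoint $\symcofree$ (which classifies coalgebra morphisms into $\symcofree(V)$, not coderivations of it). The paper proves the needed statement directly: for a coalgebra cogenerated by $\pi\colon C\to Y$, the identity
\[
\cofree(\pi)\,(\comult^{n})_n\, d \;=\; \Bigl(\sum_{i=1}^{n+1}\pi^{\otimes i-1}\otimes \pi d\otimes \pi^{\otimes n+1-i}\circ\comult^{n}\Bigr)_n
\]
together with the injectivity of $\cofree(\pi)(\comult^n)_n$ shows that $\pi d$ determines $d$, so $\pi d=0$ forces $d=0$. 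Once you replace "cofree" by "cogenerated" and supply this lemma, your bidegree count (cogenerators live in cosimplicial degrees $0$ and $1$, while $d_r$ raises that degree by $r\geq 2$) completes the proof exactly as in the paper.
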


This theorem should thus be thought of as a computational analogue of the Hochschild--Kostant--Rosenberg Theorem at the level of homology.

These computational results apply to determine the homology of $\coTHH$ of many suspension spectra, which have a coalgebra structure induced by the diagonal map on spaces.  For simply connected $X$,  topological coHochschild homology of  $\Sigma_+^\infty X$ coincides with topological Hochschild homology of  $\Sigma^{\infty}_+\Omega X$, and thus is closely related to algebraic $K$-theory, free loop spaces, and string topology.  Our results give a new way of approaching these invariants, as we briefly recall in Section \ref{sect:computations}.

The paper is organized as follows. In Section \ref{sect:definitions}, we define $\coTHH$ for a coalgebra in any model category endowed with a symmetric monoidal structure as the homotopy limit of the cosimplicial object given by the cyclic cobar construction $\coTHH^{\bullet}$.  We then give conditions for when this homotopy limit can be calculated efficiently and explain a variety of examples.

In Section \ref{sect:categoricalhochschildhomology}, we develop cofree coalgebra functors. These are not simply dual to free algebra functors, and we are explicit about the conditions on a symmetric monoidal category that are necessary to make these functors well behaved. In this context an identification similar to the Hochschild--Kostant--Rosenberg Theorem holds on the cosimplicial level.  We also prove a dg-version of the Hochschild--Kostant--Rosenberg theorem.

In Section 4 we define and study the coB\"okstedt spectral sequence for a coalgebra spectrum. In particular, we analyze the convergence of the spectral sequence, and prove that it is a spectral sequence of coalgebras. 

In Section 5 we use the coB\"okstedt spectral sequence to make computations of the homology of $\coTHH(C)$ for certain coalgebra spectra $C$. We exploit the coalgebra structure in the coB\"okstedt spectral sequence to prove that the spectral sequence collapses at $E_2$ in particular situations and give several specific examples.

\subsection{Acknowledgements}  The authors express their gratitude to the organizers of the Women in Topology II Workshop and the Banff International Research Station, where much of the research presented in this article was carried out.  We also thank Kathryn Hess, Ben Antieau, and Paul Goerss for several helpful conversations. The second author was supported by the National Science Foundation CAREER Grant, DMS-1149408. The third author was supported by the DNRF Niels Bohr Professorship of Lars Hesselholt. The fourth author was supported by the National Science Foundation, DMS-140648. The participation of the first author in the workshop was partially supported by the AWM's ADVANCE grant.

\section{coHochschild homology: Definitions and examples}\label{sect:definitions}

Let $(\D, \otimes, \one)$ be a symmetric monoidal category. We will suppress the associativity and unitality isomorphisms that are part of this structure in our notation. The coHochschild homology of a coalgebra in $\D$ is defined as the homotopy limit of a cosimplicial object in $\D$.  In this section, we define coHochschild homology and then discuss a variety of algebraic examples.
\begin{defn} 
A {\em coalgebra} $(C, \comult)$ in $\D$ consists of an object $C$ in $\D$ together with a morphism
$\comult \colon C \ra C \otimes C,$
called comultiplication, which is coassociative, \ie satisfies
\[(C \otimes \comult) \comult = (\comult \otimes C)\comult.\]
Here we use $C$ to denote the identity morphism on the object $C\in\D$.
A coalgebra is called \emph{counital} if it admits a  {\em counit} morphism $\epsilon \colon C \ra \one$ such that
\[(\epsilon \otimes C) \comult = C = (C \otimes \epsilon) \comult.\]
Finally, a counital coalgebra $C$ is called {\em coaugmented} if there furthermore is a coaugmentation morphism $\eta \colon \one \ra C$
satisfying the identities
\[ \comult \eta = \eta \otimes \eta \quad \text{ and } \quad \epsilon \eta = \one.\]
We denote the category of coaugmented coalgebras by $\coalgD$.

The coalgebra $C$ is called {\em cocommutative} if
\[\tau_{C,C} \comult = \comult,\]
with $\tau_{C,D} \colon C \otimes D \ra D \otimes C$ the symmetry isomorphism. The category of cocommutative coaugmented coalgebras in $\D$ is denoted by $\comcoalgD$.
\end{defn}

We often do not distinguish between a coalgebra and its underlying object. 

Given a counital coalgebra, we can form the associated cosimplicial coHochschild object:

\begin{defn}\label{defn:coTHH}
Let $(C, \comult, \epsilon)$ be a counital coalgebra in $\D$. 
We define a cosimplicial object $\coTHH^{\bullet}(C)$ in $\D$ by setting
\[
\coTHH^{n}(C) = C^{\otimes n+1}
\]
with cofaces
\[\delta_i \colon C^{\otimes n+1} \ra C^{\otimes n+2}, \quad \delta_i=\begin{cases}
C^{\otimes i} \otimes \comult \otimes C^{n-i}, & 0\leq i \leq n,\\
{\tau_{C, C^{\otimes n+1}}}(\comult \otimes  C^{\otimes n}), & i=n+1,
\end{cases}\]
and codegeneracies
\[\sigma_i \colon C^{\otimes n+2} \ra C^{\otimes n+1}, \quad
\sigma_i = C^{\otimes i+1} \otimes \epsilon \otimes C^{\otimes n-i} \quad \text{for } 0 \leq i \leq n.\]
\end{defn}

Note that if $C$ is cocommutative, $\comult$ is a coalgebra morphism and hence $\coTHH^{\bullet}(C)$ is a cosimplicial counital coalgebra in $\D$.

Recall that if $\D$ is a model category, by \cite[16.7.11]{hirschhorn} the category $\D^{\Delta}$ of cosimplicial objects in $\D$ is a Reedy framed diagram category, since $\Delta$ is a Reedy category. In particular, simplicial frames exist in $\D$. Hence we can form the homotopy limit $\holim_{\Delta} X^{\bullet}$ for $X \in \D^{\Delta}$.  More precisely, we use the model given by Hirschhorn \cite[Chapters  16 and 19]{hirschhorn} and in particular apply \cite[19.8.7]{hirschhorn}, so that $\holim_\Delta(X_\bullet)$ can be computed via totalization as
\[\holim_{\Delta} X_{\bullet} = \Tot Y_{\bullet}\]
where $Y_\bullet$ is a Reedy fibrant replacement of $X_\bullet$. Note that in order to apply \cite[19.8.7]{hirschhorn}, it suffices to have a Reedy framed diagram category structure.

\begin{defn}\label{defn:coTHHholim}
Let $\D$ be a model category and a symmetric monoidal category. Let $(C, \comult, 1)$ be a counital coalgebra in $\D$. Then the \emph{coHochschild homology} of $C$ is defined by
\[\coTHH(C) = \holim_{\Delta} \coTHH^{\bullet}(C).\]
\end{defn}

We chose to denote coHochschild homology in a category $\D$ as above by $\coTHH$ instead of $\coHH$ to emphasize that the construction relies on the existence of a model structure on $\D$. However, in several of the examples that follow, the category $\D$ does not come from topological spaces or spectra.

For example we will see in \ref{ex:cochBounded} and \ref{ex:ch} that $\coTHH$ coincides with the original definition of coHochschild homology for $k$-modules and more generally cochain coalgebras (see \cite{doi}, \cite{hps}). We will use the standard notation $\coHH(C)$ for the coHochschild homology of a coalgebra $C$ in the category of graded $k$-modules in later sections.

\begin{rem}\label{rem:fibrancyassumptionsforholim}
Let $\D$ be a symmetric monoidal model category. Let $f \colon C \ra D$ be a morphism of counital coalgebras and assume that $f$ is a weak equivalence and $C$ and $D$ are cofibrant in $\D$. Then $f$ induces a degreewise weak equivalence of cosimplicial objects $\coTHH^{\bullet}(C) \ra \coTHH^{\bullet}(D)$. Hence the corresponding Reedy fibrant replacements are also weakly equivalent and $f$ induces a weak equivalence $\coTHH(C) \ra \coTHH(D)$ according to \cite[19.4.2(2)]{hirschhorn}.
\end{rem}

A central question raised by the definition of coHochschild homology  is that of understanding the homotopy limit over $\Delta$. In the rest of this section, we discuss conditions under which one can efficiently compute it, including an assortment of algebraic examples which we discuss further in Section \ref{sect:categoricalhochschildhomology}.

For a cosimpicial object $X^{\bullet}$ in $\D$ let $M_n(X^{\bullet})$ be its $n$th matching object given by
\[ 
M_n(X^{\bullet}) = \lim_{\substack{\alpha \in \Delta([n],[a]), \\\alpha \text{ surjective}, \\a \neq n}} X^a.
\]
The matching object $M_n(X^{\bullet})$ is traditionally also denoted by $M^{n+1}(X^{\bullet})$, but we stick to the conventions in \cite[15.2]{hirschhorn}.
If $\D$ is a model category, $X^{\bullet}$ is called \emph{Reedy fibrant} if for every $n$, the morphism
\[\sigma \colon X^n \ra M_n(X^{\bullet})\]
is a fibration, where $\sigma$ is induced by the collection of maps $\alpha_* \colon X^n \ra X^a$ for $\alpha\in \Delta([n],[a])$.

If $\D$ is a simplicial model category and $X^{\bullet}$ is Reedy fibrant, $\holim_{\Delta} X^{\bullet}$  is weakly equivalent to the totalization of $X^{\bullet}$ defined via the simplicial structure; see \cite[Theorem 18.7.4]{hirschhorn}. In this case the totalization is given by the equalizer
\[\Tot (X^{\bullet}) = \text{eq}\Bigg(
\prod_{n \geq 0} (X^n)^{\Delta^n} \rightrightarrows \prod_{\alpha \in \Delta([a],[b])} (X^b)^{\Delta^a}
\Bigg),\]
where $D^K$ denotes the cotensor of an object $D$ of $\D$ and a simplicial set $K$ and where the two maps are induced by the cosimplicial structures of $X^{\bullet}$ and  $\Delta^{\bullet}$.  
If the category $\D$ is not simplicial, one has to work with simplicial frames as in \cite[Chapter 19]{hirschhorn}.

In the dual case, if $A$ is a monoid in a symmetric monoidal model category, one considers the cyclic bar construction associated to $A$, \ie the simplicial object whose definition is entirely dual to the definition of $\coTHH^{\bullet}(C)$.  This simplicial object is Reedy cofibrant under mild conditions; see for example \cite[Example 23.8]{shulman}. But the arguments proving Reedy cofibrancy of the simplicial Hochschild object do not dualize to $\coTHH^{\bullet}(C)$: It is neither reasonable to expect the monoidal product to commute with pullbacks, nor to expect that pullbacks and the monoidal product satisfy an analogue of the pushout-product axiom. 

Nonetheless, in certain cases $\coTHH^{\bullet}(C)$ can be seen to be Reedy fibrant. In particular Reedy fibrancy is simple when the symmetric monoidal structure is given by the cartesian product.

\begin{lemma} \label{lem:ReedyFibTimes}
Let $\D$ be any category and let $C$ be a coalgebra in $(\D, \times, *)$. Then
\[\sigma \colon \coTHH^n(C) \ra M_n(\coTHH^{\bullet}(C))\]
is an isomorphism for $n\geq 2$.  Hence if $\D$ is a model category, then $\coTHH^\bullet(C)$ is Reedy fibrant if $C$ itself is fibrant.
\end{lemma}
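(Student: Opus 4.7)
The plan is to exploit the rigidity of coalgebras in a cartesian monoidal category to compute the matching objects by hand. In $(\D,\times,*)$ the counit $\epsilon\colon C\to *$ is the unique map to the terminal object, and counitality combined with the universal property of the product forces $\comult$ to be the diagonal, so every codegeneracy $\sigma_i$ is the projection that deletes the $(i{+}1)^{\mathrm{st}}$ factor. I would then observe, more generally, that for any order-preserving surjection $\alpha\colon[n]\twoheadrightarrow[a]$, the structure map $X^\alpha\colon C^{n+1}\to C^{a+1}$ is the projection onto the coordinates indexed by $\alpha^{\flat}([a])\subset[n]$, where $\alpha^{\flat}(j):=\min\alpha^{-1}(j)$. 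This is verified by checking the generating $s_i$ directly and extending via the functoriality identity $(\gamma\alpha)^{\flat}=\alpha^{\flat}\circ\gamma^{\flat}$, a short check using order preservation.

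Under the bijection $\alpha\mapsto\alpha^{\flat}([a])$, the matching category at $[n]$ identifies with the poset $\mathcal{P}$ of proper subsets of $[n]$ containing $0$ (morphisms given by reverse inclusion), and the matching diagram sends $S\in\mathcal{P}$ to $C^{S}$ with inclusions inducing the evident coordinate projections. For $n\geq 2$ the family $\{[n]\setminus\{j\}:1\leq j\leq n\}$ lies in $\mathcal{P}$, covers $[n]$, and is closed under pairwise intersection, so a standard sheaf-style gluing argument identifies $\lim_{\mathcal{P}}C^{S}$ with $C^{[n]}=X^n$ and identifies $\sigma$ with the identity. This proves the first claim.

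For the Reedy fibrancy assertion, when $C$ is fibrant, $\sigma$ is an isomorphism (hence a fibration) for $n\geq 2$; the matching object $M_0$ is terminal, so $\sigma\colon C\to *$ is a fibration iff $C$ is fibrant; and $M_1=X^0=C$ with $\sigma$ the projection $C\times C\to C$, which is a pullback of $C\to *$ and is therefore a fibration whenever $C$ is fibrant. The main technical hurdle is the identification of $X^\alpha$ for a general surjection $\alpha$ with the specific coordinate projection indexed by $\alpha^{\flat}$; once that bookkeeping is in place, the limit computation and the fibrancy verification are both routine.
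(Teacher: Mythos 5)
Your proposal is correct, and it reaches the isomorphism by a genuinely different route than the paper. The paper's proof works only with the codimension-one surjections $[n]\twoheadrightarrow[n-1]$: it assembles the canonical projections $p_\alpha\colon M_n(\coTHH^\bullet(C))\to C^{\times n}$ into an explicit composite $M_n(\coTHH^\bullet(C))\to C^{\times n+1}$ (the one sending $(c^1,\dotsc,c^n)$ to $(c^1_1,\dotsc,c^n_n,c^1_n)$) and asserts that this is an inverse to $\sigma$; the verification that it is a two-sided inverse, which rests on the compatibility relations $\sigma_i(x_j)=\sigma_{j-1}(x_i)$ cutting out the matching object, is left implicit. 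You instead identify every structure map in the matching diagram as a coordinate projection (via the minimal-section indexing $\alpha\mapsto\alpha^{\flat}([a])$), reindex the whole matching category as the poset of proper subsets of $[n]$ containing $0$ under reverse inclusion, and compute the limit by gluing over the cover $\{[n]\setminus\{j\}\}_{1\leq j\leq n}$. What your version buys is that no inverse ever has to be written down or checked, and the role of the hypothesis $n\geq 2$ becomes transparent: it is exactly what is needed for every coordinate $k\in[n]$ to lie in some $[n]\setminus\{j\}$ with $j\geq 1$, with well-definedness of the glued coordinate guaranteed by compatibility on overlaps. Two small cosmetic points: your family $\{[n]\setminus\{j\}\}$ is not literally closed under pairwise intersection (the intersections $[n]\setminus\{j,j'\}$ merely remain in the poset), which is all your gluing argument actually needs; and the observation that $\comult$ is forced to be the diagonal, while true, is not used, since the matching object involves only the codegeneracies and hence only the counit, which in a cartesian category is the unique map to $*$. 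Your treatment of $n=0,1$ agrees with the paper's.
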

\begin{proof}
For surjective $\alpha \in \Delta([n],[n-1])$, let 
\[p_{\alpha} \colon M_n(\coTHH^{\bullet}(C)) \ra \coTHH^{n-1}(C)=C^{\times n}\]
 be the canonical projection to the copy of $\coTHH^{n-1}(C)$ corresponding to $\alpha$. Then an inverse to $\sigma$ is given by
\begin{multline*}
M_n(\coTHH^{\bullet}(C)) \xto{(p_{\alpha})_{\alpha}}  (C^{\times n})^{\times n} \xto{((C^{\times n})^{\times n},C^{\times n} \times (*)^{\times n-1})} (C^{\times n})^{n+1}\\
  \xto{(C\times *^{n-1}) \times \dotsb \times (*^{n-1} \times C) \times (*^{n-1} \times C)} C^{\times n+1}. 
\end{multline*}
Intuitively, this map sends $(c^1,\dotsc,c^n) \in M_n(\coTHH^{\bullet}(C))$ with $c^i=(c^i_1,\dotsc,c^i_n) \in C^{\times n}$ to
$(c^1_1,\dotsc, c^n_n, c^1_n).$
The Reedy fibration conditions for $n = 1, 0$ follow from the fibrancy of $C$.
\end{proof}

In several algebraic examples, degreewise surjections are fibrations and the following lemma will imply Reedy fibrancy.
\begin{lemma} \label{lem:ReedyFibSurj}
Let $C$ be a counital coalgebra in the category of graded $k$-modules for a commutative ring $k$.  If $C$ is coaugmented, or if there is a $k$-module map $\eta\colon k\to C$ such that $\epsilon\eta=C$, then the matching maps
\[ \sigma\colon \coTHH^n(C) \to M_n(\coTHH^\bullet(C)) \]
are surjective.
\end{lemma}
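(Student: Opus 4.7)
The plan is to construct an explicit $k$-module section of the matching map $\sigma$ by exploiting the splitting $\eta$ of $\epsilon$. First I would decompose $C \cong k \oplus \bar C$ as graded $k$-modules, with $\bar C = \ker \epsilon$; this is immediate from either the coaugmentation or the bare splitting hypothesis. It yields a direct sum decomposition
\[
C^{\otimes n+1} = \bigoplus_{S \subseteq \{0, \ldots, n\}} A_S,
\]
where $A_S \cong \bar C^{\otimes |S|}$ is the summand placing $\bar C$ at the tensor positions indexed by $S$ and $\eta(k)$ at the remaining positions.

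Next I would analyze how each matching codegeneracy $\sigma_i : C^{\otimes n+1} \to C^{\otimes n}$ (applying $\epsilon$ to the $(i+1)$-st factor) interacts with this decomposition. Since $\epsilon$ vanishes on $\bar C$ and is the identity on $\eta(k)$, the map $\sigma_i$ kills $A_S$ whenever $i+1 \in S$, and otherwise restricts to an isomorphism $A_S \xrightarrow{\sim} A_{\phi_i(S)} \subseteq C^{\otimes n}$, where $\phi_i$ is the order-preserving bijection $\{0, \ldots, n\} \setminus \{i+1\} \to \{0, \ldots, n-1\}$.

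Given a compatible tuple $(y_0, \ldots, y_{n-1}) \in M_n(\coTHH^\bullet(C))$, decomposed as $y_j = \sum_{S'} y_j^{S'}$, I would define a lift $x = \sum_S x^S \in C^{\otimes n+1}$ by
\[
x^S = \begin{cases} 0 & \text{if } \{1, \ldots, n\} \subseteq S, \\ y_i^{\phi_i(S)} & \text{otherwise, for any } i \text{ with } i+1 \notin S. \end{cases}
\]
The main step is verifying that this prescription is independent of the choice of $i$: when $i < j$ both satisfy $i+1, j+1 \notin S$, expanding the matching identity $\sigma_i(y_j) = \sigma_{j-1}(y_i)$ in components and tracking the corresponding gap insertions shows that $y_i^{\phi_i(S)} = y_j^{\phi_j(S)}$. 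Once well-definedness is in hand, the componentwise description of the $\sigma_i$ from the previous step immediately yields $\sigma_i(x) = y_i$ for every $i$, proving surjectivity.

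The principal obstacle is the combinatorial bookkeeping linking the cosimplicial identities $\sigma_i \sigma_j = \sigma_{j-1} \sigma_i$ (for $i < j$) to identities among $S$-components under the decomposition. The conceptual content, however, is simply that the $k$-module splitting of the counit turns each codegeneracy into a projection onto a direct summand, so that compatible matching data can be reassembled by coherently choosing representatives.
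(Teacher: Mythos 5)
Your proof is correct, and it takes a genuinely different route from the one in the paper's Appendix A. The paper never splits $C$; instead it introduces insertion operators $\eta_i=\id^{\otimes i+1}\otimes\eta\otimes\id^{\otimes n-i-1}$, records their commutation relations with the codegeneracies, and writes down the preimage as an explicit inclusion--exclusion sum
\[
y=\sum_{i}\eta_i\Bigl(x_i+\sum_{j\geq 1}(-1)^j\sum_{0\leq l_1<\dotsb<l_j\leq i-1}\eta_{l_1}\sigma_{l_1}\dotsm\eta_{l_j}\sigma_{l_j}(x_i)\Bigr),
\]
then verifies $\sigma_s(y)=x_s$ by a sign-cancellation argument. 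You instead use the idempotent $\eta\epsilon$ to split $C\cong\eta(k)\oplus\ker\epsilon$ and hence $C^{\otimes n+1}\cong\bigoplus_S A_S$, observe that each $\sigma_i$ is either zero or an isomorphism on each summand, and assemble the lift summand by summand, with the matching relations $\sigma_i(y_j)=\sigma_{j-1}(y_i)$ supplying exactly the coherence needed to choose components consistently. The two constructions produce essentially the same section (the alternating sum in the paper is the inclusion--exclusion required to isolate your summands), but your packaging avoids all signs and makes the mechanism transparent: each codegeneracy is a projection onto a direct summand, so compatible matching data can be reassembled by choosing representatives. It also works verbatim over any commutative ring once the splitting $\eta$ exists, just as the paper's argument does. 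A few points to make explicit in a written version: $x^S$ should be defined as $(\sigma_i|_{A_S})^{-1}\bigl(y_i^{\phi_i(S)}\bigr)$ under the canonical identifications $A_S\cong\bar C^{\otimes|S|}\cong A_{\phi_i(S)}$; and in the well-definedness step one should note that for $i<j$ with $i+1,j+1\notin S$ one has $i+1\notin\phi_j(S)$ and $j\notin\phi_i(S)$, so neither side of the matching identity annihilates the relevant component, and that $\phi_i$ is injective on subsets omitting $i+1$, so that components on the two sides can be matched one-to-one.
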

In the rather technical proof we construct an explicit preimage for any element in $M_n(\coTHH^\bullet(C))$. We defer the proof to Appendix \ref{appendix:yuckyproof}. Note that if $k$ is a field, then by choosing a basis for $C$ over $k$ compatibly with the counit, we can always construct the necessary map $\eta$.  Thus $\coTHH$ of any counital coalgebra over a field is Reedy fibrant.

We now collect a couple of examples of what $\coTHH$ is in various categories. 

\begin{ex}\label{ex:coskmod}
Let $\D=(\ckmod, \otimes, k)$ be the symmetric monoidal category of cosimplicial $k$-modules for a field $k$. There is a simplicial model structure on $\D$; see \cite[II.5]{fresse} for an exposition. The simplicial cotensor of a cosimplicial $k$-module $M^{\bullet}$ and a simplicial set $K_{\bullet}$ is given by
\[(M^K)^n= \Set(K_n,M^n)\]
with $\alpha \in \Delta([n],[m])$ mapping $f\colon K_n \ra M^n$ to
\[K_m \xto{\,\alpha\,}  K_n \xto{\,f\,}  M_n \xto{\,\alpha_*\,}  M_m,\]
see \cite[II.2.8.4]{goerss-jardine}.
As in the simplicial case (see \cite[III.2.11]{goerss-jardine}), one can check that the fibrations are precisely the degreewise surjective maps.
Hence by Lemma \ref{lem:ReedyFibSurj} $\coTHH(C)$ can be computed as the totalization, that is, as the diagonal of the bicosimplicial $k$-module $\coTHH^{\bullet}(C)$. 
Even though $\D$ is not a monoidal model category, the tensor product of weak equivalences is again a weak equivalence, and hence a weak equivalence of counital coalgebras  induces  a weak equivalence on $\coTHH$.
\end{ex}

Recall that the dual Dold--Kan correspondence is an equivalence of categories
\[
N^* \colon \ckmod \leftrightarrows\dgm^{\geq0}(k) \ \colon\! \Gamma^{\bullet}
\]
between cosimplicial $k$-modules and nonnegatively graded cochain complexes over $k$. Given $M^{\bullet} \in \ckmod$, the normalized cochain complex $N^*(M^{\bullet})$ is defined by
\[N^a(M^{\bullet})=\bigcap_{i=0}^{a-1} \ker (\sigma_i \colon M^a \ra M^{a-1})\]
for $a>0$ and $N^0(M^{\bullet}) = M^0$, with differential $d \colon N^a(M^{\bullet}) \ra N^{a+1}(M^{\bullet})$ given by
\[\sum_{i=0}^{a+1} (-1)^i \delta_i.\]

\begin{ex}\label{ex:cochBounded}

Let $k$ be a field. Let $\D$ be the  symmetric monoidal category $\D=(\dgm^{\geq0}(k), \otimes, k)$ of nonnegatively graded cochain complexes over $k$. The dual Dold--Kan correspondence
yields that $\D$ is a simplicial model category, with weak equivalences the quasi-isomorphisms and with fibrations the degreewise surjections.
Hence by Lemma \ref{lem:ReedyFibSurj} we have that $\coTHH^{\bullet}(C)$ is Reedy fibrant for any counital coalgebra $C$ in $\D$.
The totalization of a cosimplicial cochain complex $M^{\bullet}$ is given by
\[\Tot(M^{\bullet}) \cong N^*(\Tot(\Gamma(M^{\bullet}))) = N^*(\diag(\Gamma(M^{\bullet}))).\]

Recall that given a double cochain complex $B_{*,*}$ with differentials $d^h \colon B_{*,*} \ra B_{*+1,*}$ and $d^v\colon B_{*,*} \ra B_{*,*+1}$, we can form the total cochain complex  $\Tot_{\dgm}(B)$ given by
\[\Tot_{\dgm}(B)_n = \prod_{p+q =n} B_{p,q}.\]
The projection to the component $B_{s, n+1-s}$ of the differential applied to $(b_{p} \in B_{p,n-p})_p$  is given by
$d^h(b_{s-1,n-s+1}) +(-1)^pd^v(b_{s,n-s}).$ A similar construction can be carried out for double chain complexes.

As in the bisimplicial case, we can turn a bicosimpicial $k$-module $X^{\bullet,\bullet}$ into a double complex by applying the normalized cochain functor to both cosimplicial directions. The homotopy groups of $\diag X^{\bullet,\bullet}$ are the homology of the associated total complex, hence 
\[\coTHH(C) = \Tot_{\dgm} N^*(\coTHH^{\bullet}(C))\]
for any counital coalgebra in $\D$. Hence our notion of coHochschild homology coincides with the classical one as in \cite{doi} and \cite{hps}. Again weak equivalences of counital coalgebras induce weak equivalences on $\coTHH$.
\end{ex}

\begin{ex}\label{ex:simpset}
The category $\D = (\sSet, \times, *)$ is a simplicial symmetric monoidal model category with its classical model structure. Every object is cofibrant, and
by Lemma \ref{lem:ReedyFibTimes}, $\coTHH(C)$ can be computed as the totalization of the cosimplicial simplicial set $\coTHH^{\bullet}(C)$ when $C$ is fibrant.
\end{ex}

\begin{ex}\label{ex:skmod}
Let $\D = (\skmod, \otimes, k)$ be the category of simplicial $k$-modules over a field $k$. Every object is cofibrant in $\D$. 
 Let $C$ be a counital coalgebra. Since every morphism that is degreewise surjective is a fibration (see  \cite[III.2.10]{goerss-jardine}), Lemma \ref{lem:ReedyFibSurj} yields that $\coTHH^{\bullet}(C)$ is Reedy fibrant, and $\coTHH(C)$ is given by the usual totalization of cosimplicial simplicial $k$-modules.
\end{ex}

\begin{ex} \label{ex:chBounded} 
The category $\D=(\dgm_{\geq0}(k), \otimes, k)$ of nonnegatively graded chain complexes over a ring $k$ with the projective model structure is both a monoidal model category as well as a simplicial model category via the Dold--Kan equivalence $(N, \Gamma)$ with $(\skmod, \otimes, k)$.
If we work over a field, every object is cofibrant, and for any counital coalgebra $C$ in $\D$ Lemma \ref{lem:ReedyFibSurj} yields that $\coTHH^{\bullet}(C)$ is Reedy fibrant.

However, a word of warning is in order. The definition of coHochschild homology given in Definition \ref{defn:coTHHholim} does not coincide with the usual notion of coHochschild homology of a differential graded coalgebra as for example found in \cite{doi}:  Usually, coHochschild homology of a coalgebra $C$ in $k$-modules or differential graded $k$-modules is given by applying the normalized cochain complex functor $N^*$ to $\coTHH^{\bullet}(C)$ and forming the total complex $\Tot_{\dgm}(N^*(\coTHH^{\bullet}(C)))$ of the double chain complex resulting from interpreting $N^a(\coTHH^{\bullet}(C))$ as living in chain complex degree $-a$. 

But for any cosimplicial nonnegatively graded chain complex $M^{\bullet}$, 
\[\Tot(M^{\bullet}) \cong N_*(\Tot(\Gamma(M^{\bullet}))),\]
where the second $\Tot$ is the totalization of the simplicial cosimplicial $k$-module $\Gamma(M^{\bullet})$.
As explained in \cite[III.1.1.13]{fresse}, we have an identity
\[N_*(\Tot(\Gamma(M^{\bullet}))) = \tau_*(\Tot_{\dgm}(N^*(M^{\bullet}))),\]
where $\tau_*(C)$ of an unbounded chain complex $C$ is given by
\[\tau_*(C)_n = \begin{cases}
C_n, & n>0,\\
\ker(C_0 \ra C_{-1}), & n=0,\\
0, n<0.
\end{cases}\]

Hence if $C$ is concentrated in degree zero, so is $\coTHH(C)$, in contrast to the usual definition as for example found in \cite{doi} and \cite{hps}.

Nonetheless, if we assume that $C_0 =0$, the above identification of the totalization yields that the two notions of coHochschild homology coincide.
\end{ex}

\begin{ex}\label{ex:ch}
The category $\D=(\dgm, \otimes, k)$ of unbounded chain complexes over a commutative ring $k$ is a symmetric monoidal model category with weak equivalences  the quasi-isomorphisms and fibrations the degreewise surjective morphisms; see \cite[4.2.13]{hovey}. 

Let $C$ be a cofibrant chain complex. A simplicial frame of $C$ is given by the simplicial chain complex $\fr(C)$ which is given in simplicial degree $m$ by the internal hom object $\underline{\dgm}(N_*(\Delta^m), C)$ (see proof of \cite[5.6.10]{hovey}). 
Again, $\coTHH^{\bullet}(C)$ is Reedy fibrant. This yields that the homotopy limit $\holim_{\Delta} \coTHH^{\bullet}(C)$ coincides
with the total complex of the double complex $N^*(\coTHH^{\bullet}(C))$. Again this coincides with the definitions of coHochschild homology in \cite{doi} and \cite{hps}.
\end{ex}

\begin{ex}
Both the category of symmetric spectra as well as the category of $\mathbb{S}$-modules are simplicial symmetric monoidal model categories. Sections \ref{sect:thespectralsequence} and \ref{sect:computations} discuss $\coTHH$ for coalgebra spectra.
\end{ex}

\section{coHochschild homology of cofree coalgebras in symmetric monoidal categories}\label{sect:categoricalhochschildhomology}

We next turn to calculations of coHochschild homology of cofree coalgebras in symmetric monoidal categories.  The first step is to specify what we mean by a ``cofree'' coalgebra in our symmetric monoidal category  $\D$.  Intuitively, such a coalgebra should be given by a right adjoint to the forgetful functor from the category of coalgebras in $\D$ to $\D$ itself. The constructions are the  usual ones and can be found for example in \cite[II.3.7]{markl-shnider-stasheff} in the algebraic case, but we specify conditions under which we can guarantee the existence of this adjoint. For an operadic background to these conditions we refer the reader to \cite{ching}.

Let $\D$ be complete and cocomplete and admit a zero object $0$. Given a coalgebra $(C, \comult)$ in a symmetric monoidal category $\D$ and $n\geq 0$, we denote by $\comult^n \colon C \ra C^{\otimes n+1}$ the iterated comultiplication defined inductively by
\[\comult^{0} = C, \qquad \comult^{n+1} = (\comult \otimes C^{\otimes n}) \comult^n.\]

Note that if $(C, \comult, \epsilon, \eta)$ is a coaugmented coalgebra, the cokernel of the coaugmentation, $\coker \eta$, is a coalgebra.

\begin{defn}
 We call a (non-coaugmented) coalgebra $C$ {\em conilpotent} if the morphism
\[  (\comult^n)_{n\geq 0} \colon C \ra \prod_{n\geq {0}} C^{\otimes n+1},\]
that is, the morphism given by $\comult^n$ after projection to $C^{\otimes n+1}$, factors through
$\bigoplus_{n \geq {0}} C^{\otimes n+1}$
via the canonical morphism 
\[\bigoplus_{n \geq {0}} C^{\otimes n+1} \ra \prod_{n \geq {0}} C^{\otimes n+1}.\]
We say that a  coaugmented coalgebra $C$ is conilpotent if $\coker \eta$ is a conilpotent coalgebra.

We denote the full subcategory of $\coalgD$ consisting of coaugmented conilpotent coalgebras by $\coalgDnil$. Similarly, the category of cocommutative conilpotent coaugmented coalgebras is denoted by $\comcoalgDnil$.
\end{defn}

\begin{defn}\label{defn:cofreefriendly}
The symmetric monoidal category $(\D, \otimes, \one)$ is called \emph{cofree-friendly} 
if it is complete, cocomplete, admits a zero object, and the following additional conditions hold:
\begin{enumerate}
\item For all objects $D$ in $\D$, the functor $D \otimes -$ preserves colimits.
\item Finite sums and finite products are naturally isomorphic, that is for every finite set $J$ and all objects $D_j$, $j \in J$, the morphism 
\[\bigoplus_{j \in J} D_j \ra \prod_{j \in J} D_j\]
induced by the identity morphism on each $D_j$ is an isomorphism.
\end{enumerate}
\end{defn} 

\begin{prop}
If $\D$ is cofree-friendly, the functor 
\[coker \colon \coalgDnil \ra \D, \quad (C,\comult, \epsilon, \eta) \mapsto \coker \eta\]
admits a right adjoint $\cofree$,  which we call the \emph{cofree coaugmented conilpotent coalgebra functor.}
The underlying object of $\cofree (X)$ is given by $\bigoplus_{n \geq 0} X^{\otimes n}$. The comultiplication $\comult$ is defined via deconcatenation, \ie by 
\addtolength{\jot}{1ex}
\begin{multline*}
\bigoplus_{n\geq 0} X^{\otimes n}\xto{\bigoplus_{n\geq 0} (X^{\otimes n})_{a+b=n}} 
 \bigoplus_{n \geq 0} \prod_{\substack{ a,b\geq 0 \\ a+b=n}} X^{\otimes n} \xto{\cong} 
 \bigoplus_{n \geq 0} \bigoplus\limits_{\substack{ a,b\geq 0 \\ a+b=n}} X^{\otimes n} \\
\cong \bigoplus\limits_{a,b\geq 0} X^{\otimes a} \otimes X^{\otimes b}
\xto{\cong} 
  \bigoplus\limits_{a\geq 0} X^{\otimes a} \otimes \bigoplus\limits_{b\geq 0} X^{\otimes b},
\addtolength{\jot}{-1ex}
\end{multline*}
where the isomorphism on the first line is provided by condition (2) of Definition \ref{defn:cofreefriendly}.
The counit morphism $\pi \colon \cofree(X) \ra X$ is the identity on $X$ and the zero morphism on the other summands.
\end{prop}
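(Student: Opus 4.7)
My plan is to first construct $\cofree(X) = \bigoplus_{n \geq 0} X^{\otimes n}$ as an object of $\coalgDnil$, and then establish the adjunction bijection, with the conilpotency hypothesis playing the key role.

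For the first step, I would equip $\cofree(X)$ with the deconcatenation $\comult$ defined in the statement, the counit $\epsilon$ given by projection onto the $X^{\otimes 0} = \one$ summand, and the coaugmentation $\eta$ given by inclusion of that summand. Both hypotheses in the definition of cofree-friendly are essential here: condition (2) allows us to reinterpret $X^{\otimes n} \to \prod_{a+b=n} X^{\otimes n}$ as landing in the coproduct, while condition (1) supplies the isomorphism $\bigl(\bigoplus_a X^{\otimes a}\bigr) \otimes \bigl(\bigoplus_b X^{\otimes b}\bigr) \cong \bigoplus_{a,b} X^{\otimes a} \otimes X^{\otimes b}$ needed to make sense of the comultiplication. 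Coassociativity, counitality and the coaugmentation axioms then reduce to formal manipulations summand by summand. Conilpotency of $\coker \eta \cong \bigoplus_{n \geq 1} X^{\otimes n}$ is immediate because the iterated comultiplication $\comult^k$ on $\coker \eta$ restricted to the $X^{\otimes n}$ summand lands in copies of $X^{\otimes a_0} \otimes \cdots \otimes X^{\otimes a_k}$ with $a_i \geq 1$ and $\sum a_i = n$; hence it vanishes for $k \geq n$ and the full iterated comultiplication factors through the direct sum.

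Next, I would construct the bijection $\Hom_{\coalgDnil}(C, \cofree(X)) \cong \Hom_\D(\coker \eta_C, X)$. In the forward direction, a coalgebra morphism $\phi \colon C \to \cofree(X)$ is sent to the composite $\pi \phi$, which factors through $\coker \eta_C$ because $\phi$ preserves coaugmentations and $\pi \eta_{\cofree(X)} = 0$. Conversely, given $f \colon \coker \eta_C \to X$, write $\bar f \colon C \to X$ for its lift along the quotient, and define $\tilde f \colon C \to \cofree(X)$ to have $\epsilon_C$ as its $n=0$ component and, on the positive degree part, to be given by the composite
\[ C \twoheadrightarrow \coker \eta_C \xto{(\comult^k)_k} \bigoplus_{k \geq 0} (\coker \eta_C)^{\otimes k+1} \xto{\bigoplus_k f^{\otimes k+1}} \bigoplus_{k \geq 0} X^{\otimes k+1}, \]
where the conilpotency of $\coker \eta_C$ is exactly what guarantees that the iterated comultiplication lands in the direct sum rather than merely the product.

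The main obstacle will be verifying that $\tilde f$ is a morphism of coaugmented coalgebras, i.e.\ that $\comult_{\cofree(X)} \tilde f = (\tilde f \otimes \tilde f) \comult_C$. Projecting onto the summand $X^{\otimes a} \otimes X^{\otimes b}$ of the codomain, both sides should reduce to $\bar f^{\otimes a+b} \comult_C^{a+b-1}$ after a diagram chase using coassociativity of $\comult_C$ and the explicit deconcatenation formula, where conditions (1) and (2) of cofree-friendliness are again used to pass tensor products through direct sums. Compatibility with counit and coaugmentation is immediate from the construction. The two assignments are mutually inverse: $\pi \tilde f = \bar f$ by inspection of the $n=1$ component, while conversely any coalgebra morphism $\phi \colon C \to \cofree(X)$ is determined by $\pi \phi$, since iterating the coalgebra morphism property of $\phi$ forces its $X^{\otimes n}$-component to equal $(\pi \phi)^{\otimes n} \comult_C^{n-1}$. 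Naturality in $X$ and $C$ is then formal.
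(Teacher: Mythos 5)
The paper does not actually prove this proposition: it is stated without proof, and the preceding paragraph defers to the standard construction in Markl--Shnider--Stasheff [II.3.7] and to Ching for the operadic background. Your argument is exactly that standard construction, and it is sound. You correctly isolate where each half of cofree-friendliness enters (condition (2) to turn $\bigoplus_n\prod_{a+b=n}$ into a double coproduct, condition (1) to distribute $\otimes$ over $\bigoplus$), you verify conilpotency of $\coker\eta$ by the grading argument, and your unit/counit of the adjunction --- $\phi\mapsto\pi\phi$ one way and $f\mapsto\tilde f=(f^{\otimes k+1}\comult^{k})_k$ the other, with conilpotency of $C$ guaranteeing that $(\comult^k)_k$ lands in the coproduct --- together with the uniqueness argument that a coalgebra map into $\cofree(X)$ is determined by its projection to the cogenerators, is the correct proof. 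Two small points deserve slightly more care than your sketch gives them: first, in checking $\comult_{\cofree(X)}\tilde f=(\tilde f\otimes\tilde f)\comult_C$, the components with $a=0$ or $b=0$ do not reduce to $\bar f^{\otimes a+b}\comult_C^{a+b-1}$ but rather to $\tilde f_b$ (resp.\ $\tilde f_a$) via counitality of $C$, since the deconcatenation coproduct has the extra terms $\one\otimes w$ and $w\otimes\one$; second, your identification of the two sides for $a,b\geq 1$ silently uses that the quotient $q\colon C\to\coker\eta_C$ intertwines $\comult_C$ with the reduced comultiplication, i.e.\ $\comult_{\coker\eta_C}\,q=(q\otimes q)\comult_C$, which is what lets you replace iterates of the reduced comultiplication by $\bar f^{\otimes a+b}\comult_C^{a+b-1}$. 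Neither point is a gap, just bookkeeping to make explicit if you write the proof out in full.
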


\begin{ex}\label{ex:kmodcofreefriendly}
The categories of chain or cochain complexes discussed in Examples \ref{ex:cochBounded}, \ref{ex:chBounded} and \ref{ex:ch} are cofree-friendly, and the cofree coalgebra $T^c(X)$ generated by an object in any of these categories is the usual tensor coalgebra.

Similarly, the categories of simplicial or cosimplicial $k$-modules discussed in Examples \ref{ex:coskmod} and \ref{ex:skmod} are cofree-friendly, and the cofree coalgebra generated by $X$ is obtained by applying the tensor coalgebra functor in each simplicial or cosimplicial degree.
\end{ex}
\begin{ex}\label{ex:notcofreefriendlythings}
The categories of simplicial sets, of pointed simplicial sets and of spectra are not cofree-friendly, since finite coproducts and finite products are not isomorphic in any of these categories.
A more abstract way of thinking about this is given by observing that in these categories coaugmented coalgebras can not be described as coalgebras over a cooperad in the usual way: While the category of symmetric sequences in these categories is monoidal with respect to the plethysm that gives rise to the notion of operads, the plethysm governing cooperad structures does not define a monoidal product. This is discussed in detail by Ching \cite{ching}; see in particular Remark 2.10, Remark 2.20 and Remark 2.21.
\end{ex}

Let $G$ be a group with identity element $e$. Recall that a $G$-action on an object $X$ in $\D$ consists of morphisms
\[\phi_g \colon X \ra X, \qquad g \in G,\]
such that 
$\phi_g \phi_h =  \phi_{gh}$ and $\phi_e = X$.
The {\em fixed points} $X^G$ of this actions are given by the equalizer
\[X^G = \eq(X \xrightrightarrows[(\phi_g)_{g \in G}]{(X)_{g\in G}} \prod_{g\in G} X).\]

\begin{defn}\label{defn:permutationfriendly}
Recall that for any object $X$ of $\D$ the symmetric group $\Sigma_n$ acts on $X^{\otimes n}$ by permuting the factors. We call $\D$ {\em permutation-friendly}
if the morphism 
\[ (X^{\otimes a})^{\Sigma_a} \otimes (X^{\otimes b})^{\Sigma_b} \ra (X^{\otimes a+b})^{\Sigma_a \times \Sigma_b},\]
induced by the morphisms $ (X^{\otimes a})^{\Sigma_a} \ra X^{\otimes a} $ and $ (X^{\otimes b})^{\Sigma_b} \ra X^{\otimes b}$, 
is an isomorphism for all $X$ in $\D$ and all $a,b\geq 0$.
\end{defn}

\begin{prop}
If $\D$ is cofree-friendly and permutation-friendly, the forgetful functor 
\[\coker\!\!\colon \comcoalgDnil \ra \D, \quad (C,\comult,\epsilon,\eta) \mapsto \coker \eta\]
admits a right adjoint $\symcofree$, which is called the \emph{cofree cocommutative conilpotent coaugmented coalgebra functor}.
For a given object $X$ of $\D$, define $\symcofree (X)=\bigoplus_{n \geq 0} (X^{\otimes n})^{\Sigma_n}$. The comultiplication $\comult$ on $\symcofree(X)$ is defined by
\addtolength{\jot}{1ex}
\begin{equation*}
\begin{split}
\bigoplus_{n\geq 0} (X^{\otimes n})^{\Sigma_n}
&\xto{\bigoplus_{n\geq 0} ((X^{\otimes n})^{\Sigma_n})_{a+b=n}}
\bigoplus\limits_{n \geq 0} \prod\limits_{\substack{ a,b \geq 0, \\a+b=n}} (X^{\otimes n})^{\Sigma_n} 
\xto{\cong} 
\bigoplus_{n \geq 0} \bigoplus_{\substack{ a,b \geq 0, \\a+b=n}} (X^{\otimes n})^{\Sigma_n}\\ 
&\cong \bigoplus\limits_{a,b\geq 0} (X^{\otimes a} \otimes X^{\otimes b})^{\Sigma_{a+b}}
\xto{\bigoplus\limits_{a,b\geq 0}\res_{a,b}}
 \bigoplus\limits_{a,b\geq 0} (X^{\otimes a} \otimes  X^{\otimes b})^{\Sigma_a \times \Sigma_b}\\
&\xto{\cong}
\bigoplus\limits_{a,b\geq 0} (X^{\otimes a})^{\Sigma_a} \otimes  (X^{\otimes b})^{\Sigma_b}
\xto{\cong}
 \bigoplus\limits_{a\geq 0} (X^{\otimes a})^{\Sigma_a} \otimes \bigoplus\limits_{b\geq 0} (X^{\otimes b})^{\Sigma_b}.
\end{split}
\addtolength{\jot}{-1ex}
\end{equation*}
The isomorphism on the first line again exists because of condition (2) of Definition \ref{defn:cofreefriendly} and the isomorphism at the beginning of the last line is derived from  Definition \ref{defn:permutationfriendly}.  The map $\res_{a,b}$ is the map  
\[\res_{a,b} \colon  (X^{\otimes a} \otimes X^{\otimes b})^{\Sigma_{a+b}} \ra (X^{\otimes a} \otimes  X^{\otimes b})^{\Sigma_a \times \Sigma_b}\]
induced by the inclusion $\Sigma_a \times \Sigma_b \ra \Sigma_{a+b}$.
The counit morphism $\pi\colon\symcofree(X) \ra X$ is given by the identity on $X$ and by the zero morphism on the other summands.
\end{prop}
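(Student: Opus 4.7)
The plan is to verify three things in turn: first, that $\symcofree(X)$ with the described structure maps is indeed an object of $\comcoalgDnil$; second, that one has a natural bijection
\[\comcoalgDnil(C, \symcofree(X)) \cong \D(\coker \eta_C, X);\]
and third, that this bijection is natural in both variables. The cofree-friendly and permutation-friendly assumptions are exactly what is needed to make each of the coherence diagrams involving symmetrization, iterated tensor powers, and deconcatenation make sense.

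First I would check that the comultiplication $\comult$ on $\symcofree(X) = \bigoplus_{n \geq 0} (X^{\otimes n})^{\Sigma_n}$ is coassociative, cocommutative, and counital with counit the projection onto the $n=0$ summand $\one$. Coassociativity reduces to associativity of concatenation of indices $(a,b,c)$ with $a+b+c=n$, once the two isomorphisms from conditions (1) and (2) of Definition \ref{defn:cofreefriendly} have been used to distribute the tensor product across the direct sum. Cocommutativity uses that the restrictions $\res_{a,b}$ and $\res_{b,a}$ are intertwined by the shuffle $\tau_{X^{\otimes a}, X^{\otimes b}}$, which holds because the element is already $\Sigma_{a+b}$-invariant. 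Coaugmentation is provided by the inclusion of the $n=0$ summand, and conilpotency follows because the iterated comultiplication on $\coker(\eta) = \bigoplus_{n \geq 1} (X^{\otimes n})^{\Sigma_n}$ explicitly lands in the finite direct sum $\bigoplus_{k \leq n}$ of tensor powers when restricted to the summand indexed by $n$.

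To produce the adjunction, given $f \colon \coker \eta_C \to X$ in $\D$, I would define its mate $\tilde f \colon C \to \symcofree(X)$ by taking, in each $n$th summand, the composite
\[C \ra \coker \eta_C \xto{\overline{\comult}^{\,n-1}} (\coker \eta_C)^{\otimes n} \xto{f^{\otimes n}} X^{\otimes n},\]
where $\overline\comult$ is the reduced comultiplication induced on $\coker \eta_C$. Conilpotency of $C$ ensures that this family of morphisms assembles into a single morphism $C \ra \bigoplus_{n \geq 0} X^{\otimes n}$; cocommutativity of $C$ together with permutation-friendliness then lets this morphism factor through the invariants $(X^{\otimes n})^{\Sigma_n}$. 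Verifying that $\tilde f$ preserves the comultiplication amounts to a diagram chase: on the $(a,b)$-summand one compares $(\overline\comult^{\,a+b-1})$ followed by a regrouping of factors to $(\overline\comult^{\,a-1}) \otimes (\overline\comult^{\,b-1})$ composed with $\comult$, which is precisely coassociativity of $\comult_C$. Preservation of the counit and coaugmentation is immediate from the definitions. The inverse of the bijection is $g \mapsto \pi \circ g$ restricted to $\coker \eta_C$, which is a one-sided inverse by inspection of the $n=1$ summand, and a two-sided inverse because any coalgebra map into $\symcofree(X)$ is determined by its components, which by coassociativity are themselves determined by the $n=1$ component.

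The main obstacle I expect is the verification that $\tilde f$ respects comultiplication, since this is where all the structural hypotheses on $\D$ get used simultaneously: the distribution of $\otimes$ over $\bigoplus$ from condition~(1), the identification of finite biproducts from condition~(2), and the key isomorphism $(X^{\otimes a})^{\Sigma_a} \otimes (X^{\otimes b})^{\Sigma_b} \cong (X^{\otimes a+b})^{\Sigma_a \times \Sigma_b}$ from permutation-friendliness all appear in the relevant commutative diagram, and one must check that the restriction maps $\res_{a,b}$ interact correctly with the cocommutativity of $C$. Once this diagram is in place, naturality in $C$ and $X$ follows from functoriality of each ingredient.
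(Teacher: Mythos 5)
Your outline is correct and is precisely the standard construction: the paper itself offers no proof of this proposition, deferring to Markl--Shnider--Stasheff [II.3.7] for the algebraic case, and your adjunct $\tilde f$ built from the reduced iterated comultiplication $\overline{\comult}^{\,n-1}$ on $\coker\eta_C$, together with the uniqueness argument showing a coalgebra map into $\symcofree(X)$ is determined by its projection to the $n=1$ summand, is exactly that argument transported to a cofree- and permutation-friendly $\D$. You have also correctly isolated the two places where the hypotheses on $\D$ enter (conilpotency to land in the direct sum rather than the product, and permutation-friendliness plus functoriality of fixed points to factor through $(X^{\otimes n})^{\Sigma_n}$ and to define $\comult$ on $\symcofree(X)$), so no further comparison is needed.
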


\begin{ex}\label{ex:permutfriendlythings}
If $k$ is a field, 
all the categories  discussed in Example~\ref{ex:kmodcofreefriendly} are permutation friendly in addition to being cofree friendly. This means that the categories of nonnegatively graded (co)chain complexes and unbounded chain complexes of $k$-modules and the categories of simplicial and cosimplicial $k$-modules all admit well-defined cofree cocommutative coalgebra functors.

The permutation friendliness of each of these categories can be deduced from the permutation friendliness of the category of graded $k$-modules. A proof for graded $k$-modules concentrated in even degrees or for $k$ a field of characteristic $2$ can be found in Bourbaki \cite[p. IV.49]{bourbaki}. If the characteristic of $k$ is different from $2$ and $M$ is concentrated in odd degrees, $(M^{\otimes n})^{\Sigma_n}$ and $(M^{\otimes a} \otimes M^{\otimes b})^{\Sigma_a \times \Sigma_b}$ can be described explicitly in terms of a basis of $M$.
For an arbitrary graded module $M$, the claim follows from writing $M$ as the direct sum of its even and its odd degree part.

More concretely, if $M$ admits a countable basis $m_i$, $i \geq 1,$ and the $m_i$ are even degree elements, or if the characteristic of $k$ is $2$, we identify $S^c(M)$ with the Hopf algebra $\Gamma_k[m_1, m_2,\dotsc]$.  As an algebra, $\Gamma_k[m_1, m_2,\dotsc]$ is the divided power algebra generated by $M$. The isomorphism to $S^c(M)$ is given by identifying the element $\gamma_{j_1}(m_{i_1})\dotsm\gamma_{j_n}( m_{i_n}) \in \Gamma_k[m_1, m_2,\dotsc]$ with the element $\sum_{\sigma \in \sh(j_1,\dotsc,j_n)} \sigma.(m_{i_1}^{\otimes j_1} \otimes \dotsb \otimes m_{i_n}^{j_n}) \in S^c(M),$
where $\sigma$ ranges over the set of $(j_1,\dotsc,j_n)$-shuffles.
The coproduct is given on multiplicative generators by 
\[\comult(\gamma_j(m_i)) = \sum_{a+b=j} \gamma_a(m_i) \otimes \gamma_b(m_i).\]

If $M$ is concentrated in odd degrees and the characteristic of $k$ is different from $2$, we can identify $S^c(M)$ with the Hopf algebra $\Lambda_k(m_1, m_2,\dotsc).$ This is the exterior algebra generated by $M$, and we identify $m_{i_1}\dotsm m_{i_n}$ with $\sum_{\sigma \in \Sigma_n} \sgn(\sigma) \sigma.(m_{1} \otimes \dotsb \otimes m_{n})$. The coproduct is given by
\[\comult(m_i) = 1 \otimes m_i + m_i \otimes 1.\]

If $k$ is not a field, the cofree cocommutative conilpotent coalgebra cogenerated by a $k$-module $M$ still exists: It is the largest cocommutative subcoalgebra of $\cofree(M)$.
\end{ex}

Now that we have established what we mean by cofree coalgebras in $\D$, we turn to an analysis of 
coHochschild homology for these coalgebras.  These computations should be thought of as a simple case of a dual Hochschild--Kostant--Rosenberg theorem for coalgebras.

Let $(C,\comult_C, \epsilon_C, \eta_C )$ and $(D,\comult_D, \epsilon_D, \eta_D)$ be cocommutative coaugmented coalgebras. Recall that their product in $\comcoalgD$ exists and is given by $C \otimes D$ with comultiplication $ (C\otimes  {\tau_{C,D}} \otimes D) (\comult_C \otimes \comult_D)$, counit $\epsilon_C \otimes \epsilon_D$ and coaugmentation $\eta_C \otimes \eta_D.$ Here $\tau_{C,D} \colon C \otimes D \ra D \otimes C$ is the symmetry isomorphism switching $C$ and $D$.
The projections $C\otimes D \ra C$ and $C\otimes D \ra D$ are given by $C \otimes \epsilon_D$ and $\epsilon_D \otimes C$.

Since $\symcofree$ is a right adjoint, we have the following property:

\begin{lemma} \label{lem:symcofreeAndProducts}
 If $\D$ is cofree-friendly, permutation-friendly and cocomplete, there is a natural isomorphism 
\[\symcofree(X \times Y) \to \symcofree(X) \otimes \symcofree(Y)\]
in $\comcoalgD$. 
It is given by the morphisms 
$\symcofree(X \times Y) \ra \symcofree(X)$ and $\symcofree(X \times Y) \ra \symcofree(Y)$ which are induced by the projections $X \times Y \ra X$ and $X \times Y \ra Y$.
\end{lemma}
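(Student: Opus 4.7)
The plan is to deduce the lemma almost immediately from adjointness, once we identify what happens to products under the cofree functor. Since the preceding proposition established that $\symcofree$ is right adjoint to the functor $\coker\colon \comcoalgDnil \to \D$, it preserves all limits, and in particular products. Thus $\symcofree(X \times Y)$ is a product of $\symcofree(X)$ and $\symcofree(Y)$ in $\comcoalgDnil$, with structure maps induced via adjunction by the projections $X \times Y \to X$ and $X \times Y \to Y$.

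Next I would invoke the description of products in $\comcoalgD$ recalled immediately before the lemma: the product of two cocommutative coaugmented coalgebras $C$ and $D$ is $C \otimes D$ equipped with the comultiplication $(C \otimes \tau_{C,D} \otimes D)(\comult_C \otimes \comult_D)$, counit $\epsilon_C \otimes \epsilon_D$, and coaugmentation $\eta_C \otimes \eta_D$, with projections given by $C \otimes \epsilon_D$ and $\epsilon_C \otimes D$. Applied to $C = \symcofree(X)$ and $D = \symcofree(Y)$, this identifies the categorical product with $\symcofree(X) \otimes \symcofree(Y)$ endowed with its standard tensor coalgebra structure, and the comparison morphism is exactly the one induced by the projections, as claimed.

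The one point to check carefully is that working in $\comcoalgDnil$ rather than $\comcoalgD$ does not distort the product: one needs that the tensor product of two conilpotent coaugmented coalgebras remains conilpotent, so that the product computed in $\comcoalgD$ already lives in $\comcoalgDnil$. This is the step I expect to require the most care. It follows from unpacking the iterated comultiplication $\comult^n$ on $C \otimes D$: it is built, using the symmetry $\tau$, from the iterated comultiplications of $C$ and $D$, so conilpotency of $C$ and $D$ forces the analogous factorization for $C \otimes D$ through $\bigoplus_{n \geq 0}(C \otimes D)^{\otimes n+1}$, using that $\otimes$ preserves colimits in each variable (condition (1) of Definition \ref{defn:cofreefriendly}) and that the requisite finite products and coproducts agree (condition (2)).

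Putting these ingredients together gives the natural isomorphism $\symcofree(X \times Y) \xrightarrow{\cong} \symcofree(X) \otimes \symcofree(Y)$ in $\comcoalgDnil$, and hence in $\comcoalgD$ via the inclusion. Naturality in $X$ and $Y$ is automatic from the universal property, since both sides are computed as right adjoints applied to a product, and the identifying morphism is constructed from projections that are themselves natural.
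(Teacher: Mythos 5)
Your proposal is correct and follows essentially the same route as the paper, which derives the lemma in one line from the fact that $\symcofree$ is a right adjoint together with the preceding description of products in $\comcoalgD$ as tensor products. The additional point you flag --- that the tensor product of conilpotent coaugmented coalgebras is again conilpotent, so the product in $\comcoalgDnil$ agrees with the one in $\comcoalgD$ --- is a genuine detail the paper leaves implicit, and your sketch of why it holds is sound.
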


Every category $\mathcal{C}$ that admits finite products gives rise to the symmetric monoidal category $(\mathcal{C}, \times, *)$, where $*$ is the terminal object. Every object $X$ in $\mathcal{C}$ is then a counital cocommutative 
coalgebra with respect to this monoidal structure: the comultiplication $X \ra X \times X$ is the diagonal, the counit $X \ra *$ is the map to the terminal object. We indicate the monoidal structure we use to form $\coTHH$ with a subscript, so that if $C$ is a coalgebra with respect to $\otimes$, we write $\coTHH_{\otimes}^{\bullet}(C)$.

\begin{prop}\label{prop:abstractHKR}
Let $\D$ be cofree-friendly and permutation-friendly.
Then for any object $X$ in $\D$
\[\coTHH_{\otimes}^{\bullet}(\symcofree(X)) \cong \symcofree(\coTHH_{\times}^{\bullet}(X)).\]
\end{prop}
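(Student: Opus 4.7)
The strategy is to exploit the fact that $\symcofree$ is the right adjoint of $\coker$ and therefore preserves products. The cosimplicial object $\coTHH^\bullet_\times(X)$ is built entirely from the categorical product $\times$, the terminal object $*$, the diagonal $\Delta_X \colon X \to X \times X$, the projection $X \to *$, and the symmetry---i.e., from the canonical coalgebra structure that every object of $\D$ carries in $(\D, \times)$. A product-preserving functor applied to this construction should yield the analogous construction performed using $\otimes$ inside the target. First I would apply Lemma \ref{lem:symcofreeAndProducts} iteratively (using associativity of both $\times$ and $\otimes$) to produce a natural isomorphism $\symcofree(X^{\times n+1}) \cong \symcofree(X)^{\otimes n+1}$ in $\comcoalgDnil$ in each cosimplicial degree $n$; this provides the candidate isomorphism degreewise.

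Next I would verify that these degreewise isomorphisms intertwine the cosimplicial structure. This reduces to two local identifications in $\comcoalgDnil$. First, $\symcofree(\Delta_X) \colon \symcofree(X) \to \symcofree(X \times X)$, composed with the isomorphism from the Lemma, equals the comultiplication $\comult$ of $\symcofree(X)$: both maps are characterized by the universal property of the product $\symcofree(X) \otimes \symcofree(X)$ in $\comcoalgDnil$, and both project to $\id_{\symcofree(X)}$ in each factor (via $\symcofree(p_i \circ \Delta_X) = \symcofree(\id_X)$ on the one hand and counitality on the other). Second, $\symcofree$ applied to the canonical map $X \to *$ lands in $\symcofree(*) \cong \one$ and must agree with the counit $\epsilon$ of $\symcofree(X)$, since any morphism in $\comcoalgDnil$ to the terminal coalgebra $\one$ is unique. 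Compatibility with the codegeneracies follows immediately from the second identification, and compatibility with the faces $\delta_i$ for $0 \le i \le n$ follows from the first together with naturality of the Lemma's isomorphism. The final face $\delta_{n+1}$ involves a symmetry $\tau$, but the Lemma's isomorphism is symmetric-monoidal natural in both arguments (it arises from a universal property invariant under swapping factors), so $\symcofree(\tau)$ becomes the appropriate $\tau$ on the tensor side.

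The main technical obstacle is purely bookkeeping: one must verify that the iterated use of Lemma \ref{lem:symcofreeAndProducts} to identify $\symcofree(X^{\times n+1})$ with $\symcofree(X)^{\otimes n+1}$ is suitably coherent, so that inserting $\comult$ or $\epsilon$ in a given tensor slot on the right-hand side really does correspond to applying $\symcofree$ to the diagonal or projection in the corresponding slot on the left-hand side. This coherence is a direct consequence of the naturality of the Lemma and the fact that its isomorphism is defined via the projections; no deeper input is required beyond carefully tracking factors.
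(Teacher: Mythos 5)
Your proposal is correct and follows essentially the same route as the paper: a degreewise identification via Lemma \ref{lem:symcofreeAndProducts} followed by checking that the diagonal and the map to the terminal object induce the comultiplication and counit of $\symcofree(X)$. The only cosmetic difference is that you compare the two maps using the universal property of the product in $\comcoalgDnil$ (projections onto the two tensor factors), whereas the paper compares them after projecting to the cogenerators $X\times X$ via the cofree adjunction; both hinge on the same adjunction, and both require noting (as the paper does explicitly) that cocommutativity makes $\comult_{\symcofree(X)}$ a coalgebra morphism.
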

\begin{proof}
Applying Lemma \ref{lem:symcofreeAndProducts} yields an isomorphism in each  cosimplicial degree $n$:
\[\coTHH^n_{\otimes}(\symcofree(X)) =\symcofree(X)^{\otimes n+1} \cong \symcofree(X^{\times n+1}) = \symcofree(\coTHH_{\times}^n(X)).\]
To check that the cosimplicial structures agree, it suffices to show that the comultiplications and counits on both sides agree. The diagonal $X \ra X \times X$ induces a morphism of coalgebras
\[\symcofree(X) \ra \symcofree(X \times X).\]
Since $\symcofree(X)$ is cocommutative, the comultiplication 
\[\comult_{\symcofree(X)} \colon \symcofree(X) \ra \symcofree(X) \otimes \symcofree(X) \cong \symcofree(X \times X)\]
is a morphism of coalgebras as well. These maps agree after projection to $X \times X$. 
A similar argument yields that $X \ra 0$ induces the counit of $\symcofree(X)$.
\end{proof}

The following theorem is a consequence of this proposition. This is the best result we obtain about the coHochschild homology of cofree coalgebras without making further assumptions on our symmetric monoidal category $\D$.
\begin{thrm}\label{thrm:coTHHcofreeinsymmonoidalmodelcat}
If $\D$ is a cofree- and permutation-friendly model category, Proposition \ref{prop:abstractHKR} implies that
\[\coTHH_{\otimes}(S^c(X)) \sim \holim_{\Delta}(S^c(\coTHH^{\bullet}_{\times}(X))).\]
\end{thrm}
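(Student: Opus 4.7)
The proof should be essentially a formal consequence of Proposition \ref{prop:abstractHKR} together with the definition of $\coTHH$ as a homotopy limit, so the plan is extremely short. My approach is to simply apply $\holim_{\Delta}$ to the isomorphism of cosimplicial objects provided by the proposition, and then invoke the definition of $\coTHH$ on the left hand side.

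More precisely, I will proceed as follows. First, Proposition \ref{prop:abstractHKR} gives a natural isomorphism of cosimplicial objects in $\D$,
\[
\coTHH_{\otimes}^{\bullet}(\symcofree(X)) \cong \symcofree(\coTHH_{\times}^{\bullet}(X)).
\]
Next, I would recall that $\coTHH_{\otimes}(\symcofree(X))$ is by Definition \ref{defn:coTHHholim} nothing but $\holim_{\Delta} \coTHH_{\otimes}^{\bullet}(\symcofree(X))$. Since $\D$ is a model category, $\D^{\Delta}$ inherits the Reedy framed diagram structure used throughout Section \ref{sect:definitions}, so the functor $\holim_{\Delta}$ takes objectwise weak equivalences of cosimplicial objects to weak equivalences (here one picks Reedy fibrant replacements and applies \cite[19.4.2(2)]{hirschhorn} exactly as in Remark \ref{rem:fibrancyassumptionsforholim}). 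In particular $\holim_{\Delta}$ carries the above isomorphism to a weak equivalence
\[
\coTHH_{\otimes}(\symcofree(X)) = \holim_{\Delta} \coTHH_{\otimes}^{\bullet}(\symcofree(X)) \xrightarrow{\;\sim\;} \holim_{\Delta}\bigl(\symcofree(\coTHH_{\times}^{\bullet}(X))\bigr),
\]
which is the desired conclusion.

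There is essentially no obstacle beyond sorting out the model-categorical bookkeeping: one just has to make sure the homotopy limit is interpreted in the same Reedy-framed sense on both sides so that the isomorphism of cosimplicial diagrams actually induces the weak equivalence $\sim$ after taking $\holim_{\Delta}$. In particular, the statement does not claim anything about being able to compute the right hand side more explicitly (that is the content of the subsequent differential graded Hochschild--Kostant--Rosenberg theorem, where one replaces the general model-categorical $\holim_{\Delta}$ by the totalization available in the dg setting). The present theorem is thus best viewed as isolating the precise ingredient — compatibility of $\symcofree$ with the cosimplicial structure, already proved in Proposition \ref{prop:abstractHKR} — needed to reduce a Hochschild--Kostant--Rosenberg-type identification for cofree coalgebras to an understanding of the homotopy limit of $\symcofree$ applied to a cosimplicial diagram of underlying objects.
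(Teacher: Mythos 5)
Your proof is correct and matches the paper's (implicit) argument: the paper offers no separate proof, simply stating that the theorem is a consequence of Proposition \ref{prop:abstractHKR}, and the intended reasoning is exactly what you wrote — apply $\holim_{\Delta}$ to the degreewise isomorphism of cosimplicial objects and invoke Definition \ref{defn:coTHHholim} together with the homotopy invariance of the Reedy-framed homotopy limit. Your closing remark about what the theorem does and does not claim also accurately reflects its role in the paper.
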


If $\D$ is a simplicial model category, there is an easy description of $\Tot(\coTHH^{\bullet}_{\times}(X))$ as a free loop object.
\begin{prop}[see \eg \cite{loday-loops}]\label{prop:coTHHcotensorS1simpmodelcat}%
Let $\D$ be a simplicial model category and $X$ a fibrant object of $\D$.
Then 
\[\holim_{\Delta}(\coTHH^{\bullet}_{\times}(X)) \sim  X^{\sphere^1}.\]
\end{prop}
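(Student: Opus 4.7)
The plan is to identify $\coTHH^{\bullet}_{\times}(X)$ with the cosimplicial cotensor of $X$ against a simplicial model of the circle, and then invoke standard facts about totalizations in simplicial model categories.

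First, I would fix a simplicial model of $\sphere^{1}$, for instance $\Delta^1/\partial\Delta^1$, which has exactly $n+1$ simplices in simplicial degree $n$. Applying the cotensor $X^{(-)}$ degreewise yields a cosimplicial object whose degree-$n$ term is $X^{\sphere^1_n} \cong X^{\times(n+1)}$. The face and degeneracy maps of $\sphere^1$ translate, via the contravariant functor $X^{(-)}$, into cosimplicial coface and codegeneracy maps. Inspection shows that these agree with the structure of Definition \ref{defn:coTHH} relative to the cartesian monoidal structure: the interior cofaces $\delta_i = X^{\times i} \times \comult \times X^{\times (n-i)}$ arise from the interior face maps of $\sphere^1$ via the diagonal; the codegeneracies $\sigma_i = X^{\times (i+1)} \times \epsilon \times X^{\times (n-i)}$ arise from the degeneracies of $\sphere^1$ via the unique map $X \to \ast$; and the cyclic last coface $\delta_{n+1} = \tau_{X, X^{\times n+1}}(\comult \times X^{\times n})$ corresponds to the face of $\sphere^1$ that wraps around the basepoint. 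This produces an isomorphism of cosimplicial objects $\coTHH^{\bullet}_{\times}(X) \cong X^{\sphere^1_\bullet}$.

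Second, since $X$ is fibrant, Lemma \ref{lem:ReedyFibTimes} guarantees that $\coTHH^{\bullet}_{\times}(X)$ is Reedy fibrant. In a simplicial model category, the homotopy limit of a Reedy fibrant cosimplicial object coincides with its totalization, and the standard cotensor--end adjunction yields a natural isomorphism $\Tot(X^{K_\bullet}) \cong X^{K}$ for any simplicial set $K$ and fibrant $X$. Specializing to $K = \sphere^1$ gives
\[\holim_{\Delta}(\coTHH^{\bullet}_{\times}(X)) \simeq \Tot(X^{\sphere^1_\bullet}) \cong X^{\sphere^1},\]
as desired.

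The only substantive step is the cosimplicial identification in step one; in particular, checking that the cyclic last coface $\delta_{n+1}$, built from the twist and the diagonal, matches the corresponding face of the simplicial circle model requires a small combinatorial argument. Once this identification is in hand, the remainder is formal, and a detailed treatment of this simplicial-circle interpretation is available in Loday \cite{loday-loops}.
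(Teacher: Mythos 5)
Your argument is correct and is precisely the standard one the paper is pointing to: the paper offers no proof of this proposition (it simply cites Loday), but it sets up exactly your ingredients elsewhere — Lemma \ref{lem:ReedyFibTimes} for Reedy fibrancy when $X$ is fibrant, the identification of $\coTHH^{\bullet}$ with $\Hom(S^1_{\bullet},-)$ for $S^1_\bullet=\Delta^1_\bullet/\partial\Delta^1_\bullet$ carried out in Section \ref{sect:thespectralsequence}, and the fact that $\holim_\Delta$ of a Reedy fibrant cosimplicial object is computed by $\Tot$. The final step $\Tot(X^{K_\bullet})\cong X^{K}$ via the end/coend adjunction is the right way to finish, so nothing is missing.
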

{This result identifies  $\coTHH_\times(X)$ as a sort of ``free loop space'' on $X$.  This is precisely the case in the category of spaces, as in Example \ref{ex:freeloopspace}.}

\begin{ex}
For the category of cosimplicial $k$-modules we can choose the diagonal as a model for the homotopy limit over $\Delta$. Since the cofree coalgebra cogenerated by a cosimplicial $k$-module is given by applying $\symcofree$ degreewise, we actually obtain an isomorphism
of coalgebras
\[\coTHH_{\otimes}(S^c(X))\cong S^c(X^{\sphere^1}).\]
\end{ex}

In the case where $S^c$ is a right Quillen functor, we obtain the following corollary to Theorem \ref{thrm:coTHHcofreeinsymmonoidalmodelcat} and Proposition \ref{prop:coTHHcotensorS1simpmodelcat}.
\begin{cor}
Let $\D$ be a simplicial and a monoidal model category. Let $X$ in $\D$ be fibrant. If there is a model structure on $\comcoalgD$ such that $\symcofree$ is a right Quillen functor, then 
\[\coTHH_{\otimes}(\symcofree(X))\cong \mathbb{R}\symcofree( X^{\sphere^1}).\]
\end{cor}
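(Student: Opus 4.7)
The plan is to chain together three facts: Theorem \ref{thrm:coTHHcofreeinsymmonoidalmodelcat}, Proposition \ref{prop:coTHHcotensorS1simpmodelcat}, and the general principle that a right Quillen functor commutes with homotopy limits once passed to its right derived functor.

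First, I would invoke Theorem \ref{thrm:coTHHcofreeinsymmonoidalmodelcat} to get the equivalence
\[\coTHH_{\otimes}(\symcofree(X)) \sim \holim_{\Delta}\bigl(\symcofree(\coTHH^{\bullet}_{\times}(X))\bigr).\]
The right-hand side is computed in $\D$, but because each term $\symcofree(\coTHH^{n}_{\times}(X)) = \symcofree(X^{\times n+1})$ lives naturally in $\comcoalgD$ and the forgetful functor commutes with the homotopy limit (as forgetting to $\D$ is the left adjoint in a Quillen pair, hence preserves certain limits, or more directly because the model structure on $\comcoalgD$ has its fibrations and weak equivalences either created by or compatible with the forgetful functor by hypothesis), one can read the homotopy limit as being taken in $\comcoalgD$.

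Next, since $\symcofree$ is right Quillen, the right derived functor $\mathbb{R}\symcofree$ commutes with homotopy limits over $\Delta$. Because $X$ is fibrant, each $X^{\times n+1}$ is fibrant in $\D$ (finite products of fibrant objects are fibrant in any model category), so $\symcofree$ evaluated on $\coTHH^{\bullet}_{\times}(X)$ already computes $\mathbb{R}\symcofree$ in each cosimplicial degree. Pushing the derived functor past the homotopy limit then yields
\[\holim_{\Delta}\bigl(\symcofree(\coTHH^{\bullet}_{\times}(X))\bigr) \sim \mathbb{R}\symcofree\bigl(\holim_{\Delta}\coTHH^{\bullet}_{\times}(X)\bigr).\]
Finally, Proposition \ref{prop:coTHHcotensorS1simpmodelcat} identifies $\holim_{\Delta}\coTHH^{\bullet}_{\times}(X) \sim X^{\sphere^1}$, and substituting gives the claimed equivalence $\coTHH_{\otimes}(\symcofree(X)) \sim \mathbb{R}\symcofree(X^{\sphere^1})$.

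The main obstacle is justifying cleanly the middle step, namely that $\mathbb{R}\symcofree$ commutes with $\holim_{\Delta}$. Concretely, one must verify that applying $\symcofree$ to a Reedy fibrant replacement of $\coTHH^{\bullet}_{\times}(X)$ (in the Reedy model structure on cosimplicial objects of $\D$) produces a Reedy fibrant cosimplicial object in $\comcoalgD$ whose totalization computes both the derived functor level-wise and the homotopy limit. Since right Quillen functors preserve fibrations and fibrant objects and since Reedy fibrancy is built out of iterated matching-object fibrations, this is formal, but does rely on the particular model structure on $\comcoalgD$ postulated in the hypothesis being compatible enough with the ambient Reedy framework. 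Once this point is settled, the rest of the argument is a direct substitution.
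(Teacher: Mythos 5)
Your argument is exactly the one the paper intends: the corollary is stated there with no written proof, being presented as an immediate consequence of Theorem \ref{thrm:coTHHcofreeinsymmonoidalmodelcat}, Proposition \ref{prop:coTHHcotensorS1simpmodelcat}, and the fact that a right Quillen functor's derived functor commutes with homotopy limits over $\Delta$. Your proposal fills in the same chain of identifications, and correctly flags the only delicate point (comparing the homotopy limit computed in $\D$ with the one computed in $\comcoalgD$ and verifying Reedy fibrancy is preserved), which the paper leaves implicit.
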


In general, however,  $S^c$ is not a right Quillen functor.  Up to a shift in degrees which is dual to the degree shift in the free commutative algebra functor, the most prominent counterexample is the category of chain complexes over a ring $k$ of positive characteristic: The chain complex $D^2$ consists of one copy of $k$ in degrees $1$ and $2$, with differential the identity. This complex is fibrant and acyclic, but $S^c(D^2)$ is not acyclic. This same counterexample shows $\symcofree$ is not right Quillen either for unbounded chain complexes or nonnegatively graded cochain complexes.

Nevertheless, a Hochschild--Kostant--Rosenberg type theorem for cofree coalgebras still holds for  coHochschild homology in the category of nonnegatively graded cochain complexes $\dgm^{\geq 0}$ over a field. 
 We denote the underlying differential graded $k$-module of $\symcofree(X)$ by $U(\symcofree(X))$. The desuspension $\Sigma^{-1}X$ of a cochain complex $X$ is the cochain complex with $(\Sigma^{-1}X)^n = X^{n-1}$.
Define the graded $k$-module $\Omega^{\symcofree(X)\vert k}$ by
\[\Omega^{\symcofree(X)\vert k} = \begin{cases}
U(S^c(X)) \otimes U(S^c(\Sigma^{-1}X)), & \text{if } \cha(k) \neq 2,\\
U(S^c(X)) \otimes \Lambda(\Sigma^{-1}X),& \text{if } \cha(k)=2,
\end{cases}
\]
with $\Lambda(M)$ denoting the exterior powers of the cochain complex $M$. We will compare $\Omega^{\symcofree(X) \vert k}$ with the notion of K\"ahler codifferentials as defined in \cite{farinati-solotar-ext} in Remark \ref{rem:KaehlerFS}.

\begin{thrm}\label{thrm:HKRcofreedg}
Let $k$ be a field. Then for $X$ in $\dgm^{\geq 0}$ there is a quasi-isomorphism 
\[\coTHH_\otimes(\symcofree(X))\to \Omega^{\symcofree(X)\vert k}.\]
\end{thrm}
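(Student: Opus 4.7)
The plan is to combine Theorem \ref{thrm:coTHHcofreeinsymmonoidalmodelcat} with a careful termwise analysis of $S^c$, exploiting the polynomial grading of the cofree coalgebra functor together with the homotopy type of $X^{\sphere^1}$ in cochain complexes over a field.

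First, I would invoke Theorem \ref{thrm:coTHHcofreeinsymmonoidalmodelcat} to reduce to
\[\coTHH_\otimes(\symcofree(X)) \simeq \holim_\Delta \symcofree(\coTHH^\bullet_\times(X)).\]
By Example \ref{ex:cochBounded}, $\dgm^{\geq 0}(k)$ is a simplicial model category in which degreewise surjections are fibrations and every object is fibrant. Since $\symcofree(Y)$ is coaugmented via $k = (Y^{\otimes 0})^{\Sigma_0} \hookrightarrow \symcofree(Y)$, Lemma \ref{lem:ReedyFibSurj} shows that $\symcofree(\coTHH^\bullet_\times(X))$ is Reedy fibrant, and so its homotopy limit is computed via the dual Dold--Kan machinery of Example \ref{ex:cochBounded} as the total complex $\Tot_{\dgm} N^*(\symcofree(\coTHH^\bullet_\times(X)))$.

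Next, I would use the polynomial splitting $\symcofree(Y) = \bigoplus_{n\geq 0} (Y^{\otimes n})^{\Sigma_n}$. All cosimplicial structure maps of $\coTHH^\bullet_\times(X)$ are generated from the diagonal $X \to X \times X$ and the projection $X \to *$, and $\symcofree$ applied termwise preserves polynomial degree, giving a splitting of cosimplicial cochain complexes
\[\symcofree(\coTHH^\bullet_\times(X)) \;=\; \bigoplus_{n\geq 0} \bigl((\coTHH^\bullet_\times(X))^{\otimes n}\bigr)^{\Sigma_n}.\]
Since $\Sigma_n$-invariants are exact over a field and Reedy fibrant cosimplicial objects behave well under direct sums, it suffices to compute, for each $n$, the totalization of $\bigl((\coTHH^\bullet_\times(X))^{\otimes n}\bigr)^{\Sigma_n}$. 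Proposition \ref{prop:coTHHcotensorS1simpmodelcat} together with an explicit small model of $\sphere^1$ identifies the underlying totalization $\Tot_{\dgm}N^*(\coTHH^\bullet_\times(X))$ with $X^{\sphere^1}\simeq X \oplus \Sigma^{-1}X$. Granting that this quasi-isomorphism can be upgraded $\Sigma_n$-equivariantly to the $n$-fold tensor product, passing to $\Sigma_n$-invariants and expanding by permutation-friendliness (Example \ref{ex:permutfriendlythings}) yields $\bigoplus_{a+b=n}(X^{\otimes a})^{\Sigma_a} \otimes (\Sigma^{-1}X^{\otimes b})^{\Sigma_b}$ in characteristic $\neq 2$; summing over $n$ gives $U(\symcofree(X)) \otimes U(\symcofree(\Sigma^{-1}X)) = \Omega^{\symcofree(X)\vert k}$. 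In characteristic $2$, the odd-degree factor $\symcofree(\Sigma^{-1}X)$ reduces by the standard divided-power relations to $\Lambda(\Sigma^{-1}X)$, giving the stated form.

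The principal obstacle is the $\Sigma_n$-equivariant promotion in the previous step. Because $\symcofree$ is not right Quillen (see the counterexample following Corollary \ref{cor}), one cannot simply commute it past $\holim_\Delta$; instead the polynomial-degree decomposition reduces the question to showing that totalization commutes with $n$-fold tensor products of Reedy fibrant cosimplicial cochain complexes in an $\Sigma_n$-equivariant manner. I expect this to be achieved by an explicit $\Sigma_n$-equivariant shuffle quasi-isomorphism of Eilenberg--Zilber type, built from the dual of the classical antisymmetrization HKR map on the cosimplicial cochain complex $\coTHH^\bullet_\times(X)$, adapting the construction of Farinati--Solotar to the positively graded setting. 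Checking that this map is a quasi-isomorphism on each polynomial summand can then be done by a filtration argument (e.g., filtering by the internal cochain degree of $X$), whose $E_1$-page collapses because $X^{\sphere^1} \simeq X \oplus \Sigma^{-1}X$ splits as a direct sum of its ``constant'' and ``loop'' pieces.
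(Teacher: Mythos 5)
Your overall strategy---reduce via Theorem \ref{thrm:coTHHcofreeinsymmonoidalmodelcat} to $\holim_\Delta \symcofree(\coTHH^\bullet_\times(X))$ and then decompose $\symcofree$ by polynomial word-length degree---is different from the paper's, which instead decomposes the \emph{cogenerators}: it writes a finite-dimensional $X$ as a sum of one-dimensional pieces $X_i$, so that $\coTHH^\bullet_\otimes(\symcofree(X))$ splits as a tensor product of $\coTHH^\bullet_\otimes(\symcofree(X_i))$, computes the one-cogenerator case by hand (Lemma \ref{lem:HKROneGenerator}, using a Doi-style cofree resolution in the even/characteristic $2$ case and a direct normalization in the odd case), and then assembles the general case with the dual Eilenberg--Zilber map, the colimit Lemma \ref{lem:infDimCoalg}, and a row-wise double-complex argument for nontrivial differentials. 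Unfortunately, your route has a genuine gap at exactly the step you flag as the ``principal obstacle,'' and in positive characteristic that obstacle is not merely technical.

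Concretely, two things go wrong. First, the claim that ``$\Sigma_n$-invariants are exact over a field'' is false unless $\cha(k)=0$ or $\cha(k)>n$: the functor $(-)^{\Sigma_n}$ on $k[\Sigma_n]$-modules is not exact when $\cha(k)$ divides $n!$, and consequently a $\Sigma_n$-equivariant quasi-isomorphism (such as the cocommutative dual shuffle map $N^*\bigl((\coTHH^\bullet_\times X)^{\otimes n}\bigr)\to \bigl(N^*(\coTHH^\bullet_\times X)\bigr)^{\otimes n}$) need not remain a quasi-isomorphism after passing to invariants. This is the same phenomenon that makes $\symcofree$ fail to be right Quillen, so it cannot be argued away. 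Second, the conclusion your method would produce is demonstrably the wrong answer in characteristic $2$: computing $\Sigma_n$-invariants of $(X\oplus\Sigma^{-1}X)^{\otimes n}$ and summing over $n$ yields $U(\symcofree(X))\otimes U(\symcofree(\Sigma^{-1}X))$, and in characteristic $2$ the factor $\symcofree(\Sigma^{-1}X)$ is a divided power coalgebra $\Gamma$, which is \emph{not} isomorphic to the exterior powers $\Lambda(\Sigma^{-1}X)$ appearing in $\Omega^{\symcofree(X)\vert k}$ (already for $X=k$ in a single degree, $\Gamma$ is infinite-dimensional while $\Lambda$ is two-dimensional); there is no ``reduction by divided-power relations'' that identifies them. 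The discrepancy between $\Gamma$ and $\Lambda$ here is precisely the visible failure of invariants to commute with the totalization, so the equivariant Eilenberg--Zilber map you hope to construct cannot induce a quasi-isomorphism on invariants in characteristic $2$. To repair the argument you would have to abandon the fixed-point decomposition in favor of something like the paper's cogenerator-by-cogenerator analysis, where the characteristic $2$ case is handled by an explicit resolution rather than by formal commutation of limits with invariants.
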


Note that this is an identification of differential graded $k$-modules, not of coalgebras. We determine the corresponding coalgebra structure on $\coTHH_{\otimes}(S^c(X))$ in certain cases in Proposition \ref{cohhcomp}.

This result corresponds to the result of the Hochschild--Kostant--Rosenberg theorem applied to a free symmetric algebra $S(X)$ generated by a chain complex $X$. See, for example, \cite[Theorems 3.2.2 and 5.4.6]{loday}. For a discussion of a coalgebra analogue of  K\"ahler differentials we refer the reader to \cite{farinati-solotar-ext}.

The proof is dual to the proof of the corresponding results for Hochschild homology. We follow the line of proof given by Loday \cite[Theorem 3.2.2]{loday}. We begin by proving a couple of lemmas.
First we identify $\coTHH_{\times}(X)$: 
\begin{lemma}
For a cochain complex $X$ over a field $k$,
\[\Tot_{\dgm} (N^* (\coTHH^{\bullet}_{\times}(X) )) \cong X \times \Sigma^{-1}X.\]
\end{lemma}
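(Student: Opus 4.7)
The plan is to compute $N^*(\coTHH^{\bullet}_{\times}(X))$ directly and observe that it is concentrated in cosimplicial degrees $0$ and $1$ with vanishing horizontal differential, so that the total complex is simply $X \oplus \Sigma^{-1}X$, which in $\dgm$ coincides with $X \times \Sigma^{-1}X$.

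First I would unwind the cosimplicial structure in this Cartesian setting. Since $\dgm$ is additive, finite products agree with finite coproducts, so $\coTHH^n_{\times}(X) = X^{\oplus n+1}$. Under this identification the cofaces $\delta_i \colon X^{\oplus n+1} \to X^{\oplus n+2}$ are all "diagonal-type" maps that duplicate the $i$-th coordinate (with $\delta_{n+1}$ duplicating the $0$-th coordinate and cycling the copy to the last slot via the symmetry), while the codegeneracies $\sigma_i \colon X^{\oplus n+2} \to X^{\oplus n+1}$ are the projections forgetting the $(i+1)$-st coordinate.

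Next I would compute the normalization $N^a = \bigcap_{i=0}^{a-1} \ker \sigma_i$. For $(x_0, \dots, x_a) \in X^{\oplus a+1}$, the condition $\sigma_i(x_0, \dots, x_a) = 0$ amounts to $x_j = 0$ for every $j \neq i+1$. Intersecting over $0 \leq i \leq a-1$ yields
\[
N^a\bigl(\coTHH^{\bullet}_{\times}(X)\bigr) = \begin{cases} X, & a=0,\\ X, & a=1,\\ 0, & a \geq 2,\end{cases}
\]
where in cosimplicial degree $1$ the surviving summand is the copy $\{(0,x_1)\} \cong X$. It remains to compute the cosimplicial differential $d = \delta_0 - \delta_1 \colon N^0 \to N^1$. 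Both $\delta_0$ and $\delta_1$ send $x \in X$ to $(x,x) \in X^{\oplus 2}$ (the symmetry $\tau$ fixes a diagonal element), so $d = 0$.

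Assembling the double complex, the only nonzero columns are $B_{0,\bullet} = X$ and $B_{1,\bullet} = X$, both carrying the internal cochain differential of $X$, with zero horizontal differential between them. The total complex is therefore $X \oplus \Sigma^{-1}X$, and since finite products and coproducts coincide in $\dgm$, this is precisely $X \times \Sigma^{-1}X$. I do not anticipate a serious obstacle; the only point requiring care is the identification of $\delta_{n+1}$ via the symmetry isomorphism, but this is automatic once one restricts to images of diagonal maps.
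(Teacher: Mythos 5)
Your computation is correct and follows essentially the same route as the paper's (very terse) proof: identify $N^0 \cong X$, $N^1 \cong 0\times X$, and $N^a = 0$ for $a\geq 2$, from which the total complex is immediate. Your extra verification that the horizontal differential $\delta_0-\delta_1$ vanishes (both cofaces being the diagonal, which $\tau$ fixes) is a detail the paper leaves implicit, and it is right.
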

\begin{proof}
This follows easily from the fact that 
$$N^0(\coTHH^{\bullet}_{\times}(X)) = X, \qquad N^1(\coTHH^{\bullet}_{\times}(X)) =0 \times X$$
and $N^a(\coTHH^{\bullet}_{\times}(X)) =0$ for $a\geq 2$.
\end{proof}

We next prove the special case of Theorem \ref{thrm:HKRcofreedg} where the module of cogenerators is one dimensional.

\begin{lemma}\label{lem:HKROneGenerator}
Let $k$ be a field. Let $X$ in $\dgm^{\geq 0}$ be concentrated in a single nonnegative degree and be one dimensional in this degree. Then there is a quasi-isomorphism 
\[\coTHH_{\otimes}(S^c(X)) \ra   \Omega^{\symcofree(X)\vert k}.\]
\end{lemma}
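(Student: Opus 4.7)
My plan is to reduce to the cosimplicial identification of Proposition~\ref{prop:abstractHKR} and then exploit the finiteness of a one-dimensional cochain complex in a single degree to compute the totalization directly. By Proposition~\ref{prop:abstractHKR},
\[
\coTHH^{\bullet}_{\otimes}(S^c(X)) \cong S^c(\coTHH^{\bullet}_{\times}(X))
\]
as cosimplicial cochain complexes, and the immediately preceding lemma identifies $\Tot_{\dgm} N^*(\coTHH^{\bullet}_{\times}(X)) \cong X \times \Sigma^{-1}X$. By Lemma~\ref{lem:symcofreeAndProducts}, in each cosimplicial level we have $S^c(X^{\times (n+1)}) \cong S^c(X)^{\otimes (n+1)}$, so the object whose totalization we must compute is concretely $S^c(X)^{\otimes (\bullet+1)}$ with cosimplicial structure induced from the diagonals $X \to X \times X$ and the terminal map $X \to 0$.

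Next, using Example~\ref{ex:permutfriendlythings}, I would fix coordinates: write $S^c(X) = \Gamma_k[x]$ when $\deg x$ is even or $\cha(k)=2$, and $S^c(X) = \Lambda_k(x)$ when $\deg x$ is odd and $\cha(k) \neq 2$, with comultiplication recalled in that example. Because $X$ is one-dimensional in a single degree, the cochain complex $S^c(X)^{\otimes (n+1)}$ is finite-dimensional in each fixed internal degree, and so the double complex $N^*(S^c(\coTHH^{\bullet}_{\times}(X)))$ has only finitely many nonzero terms contributing to any given total degree. This makes $\Tot_{\dgm}$ a strict limit and allows a direct combinatorial analysis.

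The core of the proof is then to build an explicit quasi-isomorphism $\Phi \colon \coTHH_{\otimes}(S^c(X)) \to \Omega^{S^c(X)\vert k}$. Dually to Loday's proof of HKR for free symmetric algebras \cite[Theorem 3.2.2]{loday}, I would define $\Phi$ by projecting to the normalized summand in cosimplicial degrees $0$ and $1$, using the cocommutative comultiplication on $S^c(X)$ to witness $\Sigma^{-1}X$ inside $N^1$. To check $\Phi$ is a quasi-isomorphism I would filter the double complex by total divided-power (respectively exterior) weight: the one-generator hypothesis makes each weight summand a cosimplicial $k$-module whose normalized cochain complex is easy to contract onto the claimed generators $\gamma_i(x) \otimes \gamma_j(\Sigma^{-1}x)$ (or the exterior analogue), and the associated spectral sequence collapses at $E_1$.

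The main obstacle is the parity and characteristic bookkeeping: the very definition of $\Omega^{S^c(X)\vert k}$ depends on whether $\cha(k)=2$ and whether $\deg x$ is even or odd, and the divided-power comultiplication $\comult(\gamma_j(x))= \sum_{a+b=j} \gamma_a(x) \otimes \gamma_b(x)$ produces nontrivial coefficients in positive characteristic that must line up with the coefficients arising in the weight-graded pieces of the totalization. Handling the odd-degree/char $\neq 2$ case separately (where $S^c(X) = \Lambda_k(x)$ is only two-dimensional and the cosimplicial object is essentially a finite explicit complex) and leveraging the fact that in characteristic $2$ the divided power and exterior algebras coincide on a single generator should permit a unified argument across the three cases.
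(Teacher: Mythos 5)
Your strategy would yield the lemma, but it takes a genuinely different route from the paper in the main case $S^c(X)\cong\Gamma_k[x]$. The paper does not pass through Proposition \ref{prop:abstractHKR} here at all; instead it invokes Doi's identification of $\coHH$ with $\cotor_{C^e}(C,C)$ and writes down the explicit two-term cofree resolution
\[ S^c(X)\xto{\comult} S^c(X)\otimes S^c(X)\xto{f} S^c(X)\otimes X\otimes S^c(X)\to 0,\]
which forces the cohomology to be concentrated in cosimplicial degrees $0$ and $1$ in one stroke; the generating cycles $\sum_{i=1}^{n} i\, x^{n-i}\otimes x^i$ in $N^1$ are then matched with $x^{n-1}\otimes\sigma^{-1}x$. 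Your replacement for this step --- split the normalized complex by divided-power weight and contract each weight-$n$ piece --- is viable (the coface and codegeneracy maps do preserve weight, so the cosimplicial object is a direct sum of weight-homogeneous pieces, each a bounded complex of finite-dimensional $k$-modules), but be aware that ``easy to contract'' is exactly where all of the content lives: in weight $n$ the normalized complex has length $n$, and proving $H^a=0$ for $2\le a\le n$ in arbitrary characteristic amounts to redoing the $\cotor$ computation that the paper gets for free from the length-two resolution. If you take your route you must actually exhibit the contracting homotopy or the collapse; as written this is asserted, not proved. The odd-degree, $\cha(k)\neq 2$ case is genuinely trivial either way, since $N^a$ is two-dimensional with vanishing differentials, which is exactly how the paper treats it. What your approach buys is uniformity and independence from Doi's machinery; what the paper's approach buys is that the concentration in cosimplicial degrees $0$ and $1$ comes immediately from the length of the resolution.

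Two smaller corrections. First, the finiteness claim ``finite-dimensional in each fixed internal degree'' fails when $X$ is concentrated in degree $0$; it is the weight decomposition, not internal-degree finiteness, that makes $\Tot_{\dgm}$ harmless in that case. Second, the statement that ``in characteristic $2$ the divided power and exterior algebras coincide on a single generator'' is false as written ($\Gamma_k[x]$ is infinite dimensional and $\Lambda_k(x)$ is not); what is true, and what you actually need, is that in characteristic $2$ the parity of $\deg x$ is irrelevant and $S^c(X)\cong\Gamma_k[x]$ in all cases, so only the divided-power computation occurs there.
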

\begin{proof}
First let the characteristic of $k$ be $2$ or $X$ be concentrated in even degree, so that $S^c(X) \cong \Gamma_k[x]$ for a generator $x$. Up to the internal degree induced by $X$, we can use the results of Doi \cite[3.1]{doi} to compute $\coTHH_{\otimes}(S^c(X))$ using a $S^c(X) \otimes S^c(X)$-cofree resolution of $S^c(X)$. Such a resolution is given by
$$\xymatrix{S^c(X) \ar[r]^-{\comult} & S^c(X) \otimes S^c(X) \ar[r]^-{f} & S^c(X) \otimes X \otimes S^c(X) \ar[r] & 0,}$$
with $f(x^i \otimes x^j) = x^{i-1} \otimes x \otimes x^j - x^i \otimes x \otimes x^{j-1}$, where $x^{-1}=0$. Hence the homology of $\coTHH_{\otimes}(S^c(X))$ is concentrated in cosimplicial degrees $0$ and $1$. An explicit calculation gives that all elements in $N^0(\coTHH_{\otimes}^{\bullet}(S^c(X))) \cong S^c(X)$ are cycles, while generating cycles in  $N^1(\coTHH_{\otimes}^{\bullet}(S^c(X))) $ 
are given by $\sum_{i=1}^{n} i \cdot x^{n-i} \otimes x^i$ for $n\geq 1$. We identify $x^n$ in cosimplicial degree zero with $x^n \otimes 1 \in U(\symcofree(X)) \otimes  \Lambda(\Sigma^{-1}X)$ and $\sum_{i=1}^{n} i \cdot x^{n-i} \otimes x^i$ with $x^{n-1}\otimes \sigma^{-1}x \in U(\symcofree(X)) \otimes  \Lambda(\Sigma^{-1}X).$

Now assume that $X$ is concentrated in odd degree and that the characteristic of $k$ is not $2$, so that $S^c(X) = \Lambda_k(x)$.
Hence a typical element in $\coTHH^a(S^c(X))$ is of the form 
$y=x^{t_0} \otimes x^{t_1} \otimes \dotsb \otimes x^{t_a}$ with $t_i \in \lbrace 0,1 \rbrace$. Now $y\in N^a(\coTHH^{\bullet}(S^c(X)))$ if and only if $t_1=\dotsb=t_a = 1$, and all differentials in $N^*(\coTHH^{\bullet}(S^c(X)))$ are trivial. Identifying $1 \otimes x^{\otimes a}$ with $1 \otimes (\sigma^{-1} x)^a \in U(\symcofree(X)) \otimes U(\symcofree(\Sigma^{-1}X))$ and $x \otimes x^{\otimes a}$ with $x \otimes (\sigma^{-1}x)^{\otimes a} \in U(\symcofree(X)) \otimes U(\symcofree(\Sigma^{-1}X))$ yields the result.
\end{proof}

The general case follows from the interplay of products and the cofree functor and the following result.

\begin{lemma}[Cf. {\cite[p. 719]{michaelis}}]\label{lem:infDimCoalg} 
Let $k$ be a field and let $X$ be a graded $k$-module. Then
\[\symcofree(X) \cong \colim_{V\subset X \text{\,fin\,dim}}  \symcofree(V),\]
where the canonical projection $\pi_X \colon \symcofree(X)\cong\colim_{V\subset X \text{fin\,dim}}  S^c(V) \ra X$ is given by the colimit of the maps  $\symcofree(V)\xto{\pi_V} V\to X$.  
\end{lemma}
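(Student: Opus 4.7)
The plan is to verify the isomorphism on underlying graded $k$-modules; the statements about coalgebra structure and the identification of the projection will then follow by naturality. First, I note that the collection of finite-dimensional graded subspaces $V \subset X$ forms a filtered poset under inclusion, and each inclusion $V \hookrightarrow W$ lifts via functoriality of $S^c$ to a morphism of cocommutative coaugmented conilpotent coalgebras $S^c(V) \to S^c(W)$. Together with the canonical coalgebra maps $S^c(V) \to S^c(X)$ induced by $V \hookrightarrow X$, these assemble to a natural comparison morphism $\varphi \colon \colim_V S^c(V) \to S^c(X)$.

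The key computation is
\[\colim_V \bigoplus_{n\geq 0} (V^{\otimes n})^{\Sigma_n} \cong \bigoplus_{n\geq 0} \colim_V (V^{\otimes n})^{\Sigma_n} \cong \bigoplus_{n\geq 0} \bigl((\colim_V V)^{\otimes n}\bigr)^{\Sigma_n} \cong \bigoplus_{n \geq 0}(X^{\otimes n})^{\Sigma_n} = S^c(X).\]
This uses that $X = \colim_V V$ (every element of $X$ has only finitely many nonzero coordinates with respect to any chosen basis, so it lies in some finite-dimensional subspace), that tensor products distribute over colimits, that direct sums commute with filtered colimits, and that the finite limit $(-)^{\Sigma_n}$ commutes with filtered colimits of $k$-vector spaces. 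For the projection statement, both $\pi_X$ and each $\pi_V$ are by definition the canonical projection onto the weight-one summand. Hence the colimit of the composites $S^c(V) \xrightarrow{\pi_V} V \hookrightarrow X$ is, under the identification above, exactly the projection $S^c(X) \to X$ onto the $n=1$ summand, namely $\pi_X$.

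The only point requiring care is the commutation of $(-)^{\Sigma_n}$ with the filtered colimit, which is the main subtlety. This commutation holds because fixed points of a finite group action on a $k$-vector space are given by the equalizer of the actions of the group elements with the identity, hence by a finite limit, and filtered colimits of $k$-vector spaces are exact. Concretely, if $y \in X^{\otimes n}$ is $\Sigma_n$-invariant, then $y$ is a finite sum of simple tensors and thus lies in $V^{\otimes n}$ for some finite-dimensional $V \subset X$; since the $\Sigma_n$-action on $V^{\otimes n}$ is the restriction of the action on $X^{\otimes n}$, the element $y$ remains invariant in $V^{\otimes n}$. Conversely, any element of $(V^{\otimes n})^{\Sigma_n}$ is plainly $\Sigma_n$-invariant in $X^{\otimes n}$, which together with the filtered nature of the colimit yields the required identification.
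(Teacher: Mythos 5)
Your proof is correct. Note that the paper does not actually supply its own argument for this lemma: it is stated with only a citation to Michaelis (the standard ``local finiteness'' results for cofree coalgebras), and the proof of Theorem \ref{thrm:HKRcofreedg} simply invokes it. So there is nothing in the paper to compare against step by step; what you have written is the natural direct verification, and every link in your chain of isomorphisms is justified correctly --- filtered colimits commute with direct sums and with the tensor product, and $(-)^{\Sigma_n}$ is a finite limit, hence commutes with filtered colimits of (graded) $k$-vector spaces, with your element-level argument confirming this concretely (the injectivity of $V^{\otimes n}\to X^{\otimes n}$ over a field is what makes the ``remains invariant'' step work). The one place you gesture rather than argue is the claim that the coalgebra structure ``follows by naturality'': the point to make explicit is that a filtered colimit of coalgebras over a field is computed on underlying graded $k$-modules (again because $\otimes$ preserves filtered colimits, so the comultiplications assemble), whence your comparison map is a coalgebra morphism that is an isomorphism of underlying modules and therefore an isomorphism of coalgebras. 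With that sentence added, the proof is complete and self-contained, which is arguably an improvement on the paper's bare citation.
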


\begin{proof}[Proof of Theorem~\ref{thrm:HKRcofreedg}]
Recall from Example \ref{ex:ch} that $\coTHH(C)$ of a counital coalgebra in $\dgm^{\geq 0}$ can be computed as
\[\coTHH_{\otimes}(C) = \Tot_{\dgm} (N^* \coTHH^{\bullet}_{\otimes}(C)).\]

Assume first that $X$ has trivial differential. If $X$ is finite dimensional,
\[\coTHH^{\bullet}_{\otimes}(S^c(X)) \cong \coTHH^{\bullet}_{\otimes}(S^c(X_1)) \otimes ... \otimes \coTHH^{\bullet}_{\otimes}(S^c(X_n))\]  
for one dimensional $k$-vector spaces $X_i$. Hence Lemma \ref{lem:HKROneGenerator} and the dual Eilenberg--Zilber map yield the desired quasi-isomorphism. Lemma \ref{lem:infDimCoalg} proves Theorem \ref{thrm:HKRcofreedg} for infinite dimensional cochain complexes $X$ with trivial differential.

If $X$ is any nonnegatively graded cochain complex,  applying the result for graded $k$-modules which we just proved 
shows that there is a morphism of double complexes
\[N^*(\coTHH^{\bullet}_{\otimes}(S^c(X))) \ra \Omega^{\symcofree(X)\vert k}\]
which is a quasi-isomorphism on each row, that is,  if we fix the degree induced by the grading on $X$. Hence this induces a quasi-isomorphism on total complexes.
\end{proof}

\begin{rem} \label{rem:KaehlerFS}
Farinati and Solotar \cite[Section 3]{farinati-solotar-ext}  define a symmetric $C$-bicomodule $\Omega^1_C$ and a coderivation $d \colon \Omega^1_C \ra C$  for any (ungraded) coaugmented coalgebra $C$ such that $(\Omega^1_C, d)$ satisfies the following universal property: Every coderivation $f\colon M \ra C$ from a symmetric $C$-bicomodule $M$ to $C$ factors as 
$$f= d \circ \tilde f$$
for a unique $C$-bicomodule morphism $\tilde f$. Farinati and Solotar also give a construction of $(\Omega^1_C, d)$ dual to the construction of the module of K\"ahler differentials associated to a commutative algebra.
To compare this with our definition of $\Omega^{\symcofree(X)\vert k}$, note that for $X$ concentrated in degree zero
$$\Omega^1_{\symcofree(X)} \cong \symcofree(X) \otimes X$$
as a $\symcofree(X)$-bicomodule. Since $\symcofree(X)$ is cofree, coderivations into $\symcofree(X)$ correspond to maps into $X$, and the coderivation $d \colon \symcofree(X) \otimes X \ra \symcofree(X)$  is induced by the map $\epsilon \otimes \id \colon \symcofree(X) \otimes X \ra X$.
If the characteristic of $k$ is different from $2$, this yields that $\Omega^{\symcofree(X) \vert k}$ coincides with the exterior coalgebra $\Lambda_{\symcofree(X)}(\Omega^1_{\symcofree(X)})$ on $\Omega^1_{\symcofree(X)}$ defined in \cite[Section 6]{farinati-solotar-ext}.
\end{rem}

\begin{rem}
Analogous to Proposition \cite[5.4.6]{loday}, the proof of Theorem \ref{thrm:HKRcofreedg} shows that the Theorem actually holds for any differential graded cocommutative couaugmented coalgebra $C$ whenever the underlying graded cocommutative coaugmented coalgebra is cofree.
\end{rem}

\begin{rem}
A similar result holds for unbounded chain complexes if we define $\Tot_{\dgm}$ and hence $\coTHH$ as a direct sum instead of a product in each degree. For nonnegatively graded cochain complexes both definitions of $\Tot_{\dgm}$ agree. However, if we use the homotopically correct definition of $\coTHH$ for unbounded chain complexes via the product total complex, Lemma \ref{lem:HKROneGenerator} doesn't hold for vector spaces $X$ that are concentrated in degree $-1$.
\end{rem}

\section{A coB\"okstedt Spectral Sequence}\label{sect:thespectralsequence}

While in algebraic cases we are able to understand certain good examples of   $\coTHH$ by an analysis of the definition, in topological examples we require further tools.  In this section, we construct a spectral sequence, which we call the {\em coB\"okstedt spectral sequence.}  {Recall that for a field $k$ and a ring spectrum $R$, the skeletal filtration on the simplicial spectrum $\THH(R)_{\bullet}$ yields a spectral sequence 
\[
E^2_{*,*} = \HH_*(H_*(R;k)) \Rightarrow H_*(\THH(R);k),
\] called the B\"okstedt spectral sequence \cite{Bo2}. Analogously, for a coalgebra spectrum $C$, we consider the Bousfield--Kan spectral sequence arising from the cosimplicial spectrum $\coTHH^{\bullet}(C)$.  We show in Theorem \ref{sscoalgebra} that this is a spectral sequence of coalgebras.   Since our spectral sequence is an instance of the Bousfield--Kan spectral sequence, its convergence is not immediate, but in the case where $C$ is a suspension spectrum $\Sigma_{+}^{\infty} X$ with $X$ a simply connected space, {we provide conditions  in Corollary~\ref{cor.conv.2} below under which this spectral sequence converges to }$H_{*}(\coTHH(\Sigma_{+}^{\infty} X);k).$

Let $(\Spec, \wedge , \mathbb{S})$ denote a symmetric monoidal category of spectra, such as those given by \cite{ekmm}, \cite{hss}, or \cite{mmss}. The notation $\sma$ means ``smash product over $\mathbb{S}$.'' Let $C$ be a coalgebra in this category with comultiplication $\comult\colon C \to C\wedge C$ and counit $\varepsilon\colon C \to \mathbb{S}$. Assume that $C$ is cofibrant as a spectrum so that $\coTHH(C)$ has the correct homotopy type, as in Remark \ref{rem:fibrancyassumptionsforholim}. Note that the spectral homology of $C$ with coefficients in a field $k$ is a graded $k$-coalgebra with structure maps 
\begin{align*}
\comult\colon  H_*(C; k) &\xrightarrow{H_*(\comult)} H_*(C \wedge C; k)\cong  H_*(C; k)\otimes_{k} H_*(C; k), \\
\varepsilon\colon H_*(C; k) &\xrightarrow{H_*(\varepsilon)} k .
\end{align*}

\begin{thm}\label{thm:SpecSeq} Let $k$ be a field. Let $C$ be a coalgebra spectrum that is cofibrant as a spectrum. The Bousfield--Kan spectral sequence for the cosimplicial spectrum $\coTHH^\bullet(C)$ gives a \emph{coB\"okstedt spectral sequence}  for calculating $H_{t-s}(\coTHH(C);k)$ with  $E_2$-page 
\[ E_2^{s, t}=\coHH^{k}_{s,t}(H_*(C;k)).\]
given by the classical coHochschild homology of $H_*(C;k)$ as a graded $k$-module.
\end{thm}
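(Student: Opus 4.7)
The plan is to recognize this as an instance of the homology Bousfield--Kan spectral sequence for a cosimplicial spectrum, and then identify the $E_2$-page explicitly using the Künneth theorem.

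First, I would recall the general construction: for any cosimplicial spectrum $X^\bullet$, the tower of partial totalizations $\{\Tot_s X^\bullet\}_{s\geq 0}$ produces a Bousfield--Kan spectral sequence abutting to $\pi_*(\Tot X^\bullet)$, and this works equally well for any generalized homology theory $h_*$, yielding a spectral sequence of signature
\[
E_1^{s,t} = h_t(X^s) \;\Longrightarrow\; h_{t-s}(\Tot X^\bullet),
\]
with $d_1$ the alternating sum of the cofaces and $E_2$ the cohomology of the resulting normalized cochain complex (equivalently, the $s$-th cohomotopy of the cosimplicial graded abelian group $h_*(X^\bullet)$). I would apply this with $h_* = H_*(-;k)$ and $X^\bullet = \coTHH^\bullet(C)$. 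The cofibrancy hypothesis on $C$ together with Remark~\ref{rem:fibrancyassumptionsforholim} (and the Reedy fibrant replacement in the definition of $\holim_\Delta$) ensures that $\Tot$ of a fibrant replacement computes $\coTHH(C)$, so the abutment is $H_{t-s}(\coTHH(C);k)$.

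The main content is the identification of the $E_2$-page. Since $k$ is a field and $C$ is a cofibrant spectrum, iterated Künneth gives a natural isomorphism of graded $k$-modules
\[
H_*(C^{\wedge n+1};k) \;\cong\; H_*(C;k)^{\otimes n+1}
\]
for each $n\geq 0$. I would then verify that under this isomorphism the cosimplicial structure maps of $\coTHH^\bullet(C)$ become the cosimplicial structure maps of the classical coHochschild complex $\coTHH^\bullet(H_*(C;k))$ of Definition~\ref{defn:coTHH} applied to the $k$-coalgebra $H_*(C;k)$: each coface $\delta_i$ for $0\leq i\leq n$ is built from $\comult\colon C\to C\wedge C$, which on homology is the coproduct of $H_*(C;k)$; the last coface involves in addition the symmetry $\tau$, which is compatible with the graded symmetry on $H_*$; and each codegeneracy $\sigma_i$ uses $\varepsilon\colon C\to \sphere$, which on homology gives the $k$-linear counit. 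Thus there is an isomorphism of cosimplicial graded $k$-modules
\[
H_*(\coTHH^\bullet(C);k) \;\cong\; \coTHH^\bullet(H_*(C;k)).
\]
Taking cohomology of the normalized cochain complex of the right-hand side is, by the definition recalled in Examples~\ref{ex:cochBounded} and~\ref{ex:chBounded}, precisely the classical coHochschild homology $\coHH^k_{s,t}(H_*(C;k))$, which gives the claimed $E_2$-term.

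The expected obstacle is not computational but book-keeping: one must be careful that the Bousfield--Kan tower for $\coTHH^\bullet(C)$ really does compute $H_*(\coTHH(C);k)$ in the sense of an abutment. Unlike in the algebraic examples, the cosimplicial spectrum $\coTHH^\bullet(C)$ need not be Reedy fibrant for formal reasons, so one should invoke the standard fact that the Bousfield--Kan spectral sequence is independent (up to isomorphism from $E_2$ onward) of a Reedy fibrant replacement, and that such a replacement is a levelwise weak equivalence, so it does not alter the $E_1$-page computed via homology and Künneth. The word "abuts" in the statement is deliberately weak: genuine convergence is a separate matter, addressed in the subsequent discussion (and treated in Corollary~\ref{cor.conv.2}), so at this stage I would only claim the strong convergence statement of the underlying Bousfield--Kan spectral sequence and defer full convergence criteria to the later results.
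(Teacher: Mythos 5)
Your proposal is correct and follows essentially the same route as the paper: both construct the Bousfield--Kan spectral sequence from the Tot tower, identify $E_1^{s,t}$ with the normalized part of $H_t(C^{\wedge s+1};k)\cong H_*(C;k)^{\otimes s+1}$ via the K\"unneth isomorphism over the field $k$, recognize $d_1$ as the alternating sum of the induced cofaces (the coHochschild differential), and defer convergence to the later results. The one small difference is parenthesization: the paper runs the homotopy spectral sequence of the levelwise-smashed, Reedy fibrant cosimplicial spectrum $R(\coTHH^\bullet(C)\wedge Hk)$, whereas you apply $H_*(-;k)$ to the Tot tower of $\coTHH^\bullet(C)$ itself; these agree from $E_1$ on because the derived $\Tot_n$ are finite homotopy limits and hence commute with $-\wedge Hk$, and the resulting ambiguity in the abutment is precisely what the comparison map $P$ of Corollary~\ref{cor.conv.1} is introduced to handle.
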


\begin{proof}
The spectral sequence arises as the Bousfield--Kan spectral sequence of a cosimplicial spectrum. We briefly recall the general construction. Let $X^\bullet$ be a Reedy fibrant cosimplicial spectrum and recall that 
\[\Tot (X^{\bullet}) = \text{eq}\Big(
\prod_{n \geq 0} (X^n)^{\Delta^n} \rightrightarrows \prod_{\alpha \in \Delta([a],[b])} (X^b)^{\Delta^a}
\Big),\]
where $\Delta$ is the cosimplicial space whose $m$-th level is the standard $m$-simplex $\Delta^m$. Let $\sk_n \Delta \subset \Delta$ denote the cosimplicial subspace whose $m$-th level is $ sk_n \Delta^m$, the $n$-skeleton of the $m$-simplex and set
\[\Tot_n (X^{\bullet}) := \text{eq}\Big(
\prod_{m \geq 0} (X^m)^{\sk_n \Delta^m} \rightrightarrows \prod_{\alpha \in \Delta([a],[b])} (X^b)^{\sk_n \Delta^a}
\Big).\]

 The inclusions $sk_n \Delta \hookrightarrow sk_{n+1} \Delta$ induce the maps in a tower of fibrations
\begin{align*}
\dotsb \to \Tot_n(X^\bullet) \xrightarrow{p_n} \Tot_{n-1}(X^\bullet)  \xrightarrow{} \dotsb \to \text{Tot}_0(X^\bullet) \cong X^0.
\end{align*}
Let $F_n \xrightarrow{i_n} \text{Tot}_n(X^\bullet) \xrightarrow{p_n} \text{Tot}_{n-1}(X^\bullet)$ denote the inclusion of the fiber and consider the associated exact couple: 
\[\begin{tikzpicture}
\node(a){$\pi_*(\Tot_*(X^\bullet) ) $};
\node(b)[right of=a, node distance = 3cm]{$\pi_*(\Tot_*(X^\bullet))$};
\node(c)[below of= a,node distance = 1.5cm, right of=a, node distance = 1.5cm]{$\pi_*(F_*).$};

\draw[->](a) to node [above]{$p_*$} (b);
\draw[->](b) to node [below right]{$\partial$} (c);
\draw[->](c) to node [below left]{$i_*$} (a);
\end{tikzpicture}\]
This exact couple gives rise to a cohomologically graded spectral sequence $\{E_r, d_r\}$ with $E_1^{s,t}=\pi_{t-s}(F_{s})$ and differentials $d_r\colon E_r^{s,t} \to E_r^{s+r,t+r-1}$. It is a half plane spectral sequence with entering differentials.

The fiber $F_{s}$ can be identified with $\Omega^s (N^s X^\bullet)$, where
\[N^sX^\bullet=\eq\Big( X^s\xrightrightarrows[*]{(\sigma^0,\dots, \sigma^{s-1})}\prod_{i=0}^{s-1} X^{s-1}\Big)\]
We have isomorphisms 
\[E_1^{s,t} =\pi_{t-s}(\Omega^s(N^s X^\bullet)) \cong \pi_{t}(N^s X^\bullet) \cong N^s \pi_t(X^\bullet)\]
 and under these isomorphisms the differential $d_1\colon N^s \pi_t(X^\bullet) \to N^{s+1} \pi_t(X^\bullet)$ is identified with $\sum (-1)^i \pi_t(\delta^i)$. Since the cohomology of the normalized complex agrees with the cohomology of the cosimplicial object, we conclude that the $E_2$ term is
\[E_2^{s,t}\cong H^s(\pi_t(X^\bullet), \textstyle\sum (-1)^i \pi_t(\delta^i)).\]

Let $C$ be a coalgebra spectrum and consider the spectral sequence arising from the Reedy fibrant replacement $R(\coTHH^{\bullet}(C)\wedge H k)$. We have isomorphisms 
\[
\pi_* R(\coTHH^{n}(C)\wedge H k) \cong \pi_*(\coTHH^{n}(C) \wedge H k) \cong H_*(C;k)^{\otimes_{k}(n+1)}
 \]
The map $\pi_*(\delta^i)$ corresponds to the $i$'th coHochschild differential under this identification, therefore
\[E_2^{s,t}= \mathrm{coHH}^{k}_{s,t}(H_*(C;k)). \qedhere\]
\end{proof}

From~\cite[IX 5.7]{BK}, we have the following statement about the convergence of the coB\"okstedt spectral sequence. {See also~\cite[VI.2]{goerss-jardine} for a discussion of complete convergence.}  In~\cite[IX 5]{BK}, the authors restrict to $i \geq 1$ so that they are working with groups.  Since we have abelian groups for all $i$, that restriction is not necessary here.  

\begin{prop}\label{prop:bkconvergence}
If for each $s$ there is an $r$ such that $E_r^{s, s+i} = E_{\infty}^{s, s+i}$, then the coB\"okstedt spectral sequence for $\coTHH(C)$ converges completely to \[\pi_* \Tot R(\coTHH^\bullet(C) \wedge Hk).  \]
\end{prop}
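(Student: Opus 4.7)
The plan is to deduce this essentially directly from Bousfield--Kan's general convergence criterion for the spectral sequence associated to a tower of fibrations of spectra, applied to the Tot tower. Here is how I would organize the argument.

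First, I would recall the setup from the proof of Theorem~\ref{thm:SpecSeq}: the coB\"okstedt spectral sequence arises as the homotopy spectral sequence of the tower of fibrations
\[
\cdots \to \Tot_n R(\coTHH^{\bullet}(C) \sma Hk) \xrightarrow{p_n} \Tot_{n-1} R(\coTHH^{\bullet}(C) \sma Hk) \to \cdots \to \Tot_0 R(\coTHH^{\bullet}(C) \sma Hk),
\]
whose homotopy inverse limit is $\Tot R(\coTHH^{\bullet}(C) \sma Hk)$. The $E_1$-term is $\pi_*$ of the fibers, and the full spectral sequence is the Bousfield--Kan spectral sequence of this tower.

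Next, I would invoke~\cite[IX~5.7]{BK}, which asserts that the spectral sequence of such a tower converges completely to the homotopy groups of the inverse limit provided that for each bidegree $(s, s+i)$ the $E_r$-term stabilizes at a finite stage, i.e.\ $E_r^{s,s+i} = E_\infty^{s,s+i}$ for some $r = r(s,i)$. This is precisely the hypothesis in the statement, so the conclusion follows.

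The only subtlety, which I would briefly address, is that Bousfield--Kan formulate IX.5 under the blanket restriction $i \geq 1$, since for cosimplicial pointed spaces the relevant homotopy groups are only groups (and abelian only for $i \geq 2$). In our setting the tower consists of spectra, so $\pi_{t-s}$ of each stage is an abelian group for every $t-s$, and the obstruction to running the Bousfield--Kan argument for $i \leq 0$ disappears. I would note this explicitly and observe that the same proof goes through verbatim in all bidegrees.

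The main potential obstacle is simply that the cited BK result is stated for spaces rather than spectra, so I would want to be careful to verify (or cite) that the tower-of-fibrations arguments used in~\cite[IX~5]{BK} --- in particular the $\lim^1$ analysis that identifies $\pi_*$ of the inverse limit with the abutment of the spectral sequence --- go through unchanged once everything in sight is an abelian group. This is standard (see also the discussion of complete convergence in~\cite[VI.2]{goerss-jardine}), so no new ideas are required, but writing the proof carefully means making this transition from spaces to spectra explicit.
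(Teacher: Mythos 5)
Your proposal matches the paper's own treatment: the paper likewise deduces the proposition directly from \cite[IX 5.7]{BK} (with \cite[VI.2]{goerss-jardine} for complete convergence) and disposes of the $i\geq 1$ restriction by the same observation that all homotopy groups in the spectrum-level tower are abelian. No substantive difference; your write-up is just a slightly more explicit version of the paper's citation-based argument.
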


There is a natural map
\[P\colon \Tot(R\coTHH^\bullet(C)) \wedge Hk \to \Tot R(\coTHH^\bullet(C) \wedge Hk)\]
 arising from the natural construction of a map of the form $\Hom(X,Y)\sma Z \to \Hom(X,Y\sma Z)$. 
  Since $\pi_*(\Tot(R\coTHH^\bullet(C)) \wedge Hk) \cong H_{*}(\coTHH(C);k),$ we have the following.

\begin{cor}\label{cor.conv.1}
If the conditions on $E_r^{s, s+i}$ in Proposition \ref{prop:bkconvergence} hold and the map $P$ described above induces an isomorphism in homotopy, then the coB\"okstedt spectral sequence for $\coTHH(C)$ converges completely to $H_{*}(\coTHH(C);k)$.  
\end{cor}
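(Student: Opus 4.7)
The proof plan is essentially to combine Proposition \ref{prop:bkconvergence} with the naturality of the target of convergence under the assumed homotopy isomorphism. This corollary is a nearly formal consequence of two inputs, so the proof will be short.

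First I would apply Proposition \ref{prop:bkconvergence} directly: the hypothesis on $E_r^{s,s+i}$ being eventually stable is exactly what is needed to conclude complete convergence of the coB\"okstedt spectral sequence to $\pi_* \Tot R(\coTHH^\bullet(C)\sma Hk)$. This gives the convergence statement, but with the ``wrong'' target --- one needs to replace $\Tot R(\coTHH^\bullet(C)\sma Hk)$ by $\Tot(R\coTHH^\bullet(C))\sma Hk$ in order to recover $H_*(\coTHH(C);k)$.

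Second, I would invoke the identifications stated immediately before the corollary: by definition $\coTHH(C)=\holim_\Delta \coTHH^\bullet(C)\simeq \Tot R\coTHH^\bullet(C)$, so $H_*(\coTHH(C);k)\cong \pi_*(\Tot(R\coTHH^\bullet(C))\sma Hk)$. The comparison map $P$ from the excerpt links these two potential targets, and by assumption it induces an isomorphism on homotopy. Combining these, the abutment $\pi_*\Tot R(\coTHH^\bullet(C)\sma Hk)$ is identified via $\pi_*(P)^{-1}$ with $H_*(\coTHH(C);k)$.

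There is really no obstacle here: the content of the corollary is just that one needs two separate hypotheses, one (from Proposition \ref{prop:bkconvergence}) to ensure the Bousfield--Kan spectral sequence converges at all, and a second (the homotopy isomorphism of $P$) to ensure it converges to the desired thing, namely the $k$-homology of $\coTHH(C)$ rather than the homotopy of $\Tot$ of the smashed cosimplicial object. The proof is therefore a one-line assembly of Proposition \ref{prop:bkconvergence} with the identification of $\pi_*(\Tot(R\coTHH^\bullet(C))\sma Hk)$ with $H_*(\coTHH(C);k)$, transported across $\pi_*(P)$.
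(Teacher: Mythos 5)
Your proposal is correct and matches the paper's (implicit) argument exactly: the paper treats this corollary as an immediate consequence of Proposition \ref{prop:bkconvergence} together with the identification $\pi_*(\Tot(R\coTHH^\bullet(C)) \sma Hk) \cong H_{*}(\coTHH(C);k)$ stated just before the corollary, transported across the assumed isomorphism $\pi_*(P)$. Nothing further is needed.
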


\begin{ex}\label{ex:freeloopspace}
Let $X$ be a simply connected space, let $d\colon X \to X \times X$ denote the diagonal map and let $c\colon X \to *$ be the map collapsing $X$ to a point. We form a cosimplicial space $X^{\bullet}$ with
$X^n = X^{\times (n+1)}$ and with cofaces
\[
\delta_i \colon X^{\times (n+1)} \ra X^{\times (n+2)}, \quad \delta_i=\begin{cases}
X^{\times i} \times d \times X^{\times (n-i)}, & 0\leq i \leq n,\\
{\tau_{X, X^{\times (n+1)}}}(d \times  X^{\times n}), & i=n+1,
\end{cases}\]
and codegeneracies
\[
\sigma_i \colon X^{\times (n+2)} \ra X^{\times (n+1)}, \quad
\sigma_i = X^{\times (i+1)} \times c \times X^{\times (n-i)} \quad \text{for } 0 \leq i \leq n.
\]
The  cosimplicial space $X^\bullet$ is the cosimplicial space $\coTHH^\bullet(X)$  of Definition \ref{defn:coTHH} for the coalgebra $(X,d,c)$ in the category of spaces; see Example \ref{ex:simpset}. As  noted earlier in Proposition \ref{prop:coTHHcotensorS1simpmodelcat}, the cosimplicial space $X^\bullet$ totalizes to the free loop space $\mathcal{L}X$; see Example 4.2 of \cite{bousfield}. 

We can consider the spectrum $\Sigma_+^{\infty} X$ as a coalgebra with comultiplication arising from the diagonal map $d\colon X \to X \times X$ and counit arising from $c\colon X \to *$. We have an identification of cosimplical spectra
\[
\coTHH^{\bullet}(\Sigma_+^{\infty} X)=\Sigma^{\infty}_+( X^{\bullet}).
\]
The topological coHochschild homology is the totalization of a Reedy fibrant replacement of the above cosimplicial spectrum. We have a natural map
\[
\Sigma^{\infty}_+ \Tot(X^{\bullet}) \to \Tot(\Sigma^{\infty}_+ X^{\bullet}),
\]
as Malkiewich describes in {Section 2 of \cite{malkiewich}}, which after composing with the map to the totalization of a Reedy fibrant replacement becomes a stable equivalence {for simply connected $X$; see Proposition 2.22} of \cite{malkiewich}. Hence we have a stable equivalence
\[
\Sigma^\infty_+ \mathcal{L}X \xrightarrow{\cong}  \coTHH(\Sigma_+^{\infty} X).
\]
\end{ex}

\begin{cor}\label{cor.conv.2}
{Let $X$ be a simply connected space.  If for each $s$ there is an $r$ such that $E_r^{s, s+i} = E_{\infty}^{s, s+i}$, the coB\"okstedt spectral sequence arising from the coalgebra $\Sigma_+^{\infty} X$ converges completely to 
\[H_{*}(\coTHH(\Sigma_+^{\infty} X);k) \cong H_*(\mathcal{L}X; k).\]
}
\end{cor}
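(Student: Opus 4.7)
The plan is to invoke Corollary \ref{cor.conv.1} with $C = \Sigma_+^\infty X$ and combine it with the Malkiewich equivalence recorded in Example \ref{ex:freeloopspace}. The hypothesis on the $E_r$-pages is supplied in the statement, so I would focus on the two remaining ingredients: (i) identifying the abutment with $H_*(\mathcal{L}X; k)$, and (ii) verifying that the comparison map
\[ P : \Tot(R\coTHH^\bullet(\Sigma_+^\infty X)) \wedge Hk \longrightarrow \Tot R(\coTHH^\bullet(\Sigma_+^\infty X) \wedge Hk) \]
is a homotopy equivalence.

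Ingredient (i) is essentially immediate from Example \ref{ex:freeloopspace}, which invokes Malkiewich's Proposition 2.22 of \cite{malkiewich} to produce a stable equivalence $\Sigma_+^\infty \mathcal{L}X \simeq \coTHH(\Sigma_+^\infty X)$ whenever $X$ is simply connected. Smashing this equivalence with $Hk$ and passing to homotopy groups yields the desired isomorphism $H_*(\coTHH(\Sigma_+^\infty X); k) \cong H_*(\mathcal{L}X; k)$.

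Ingredient (ii) is where the simple connectedness of $X$ is used in an essential way, and I expect it to be the main obstacle. The strategy is that, since $X$ is $1$-connected, the normalized pieces $N^s \Sigma_+^\infty X^\bullet$ have connectivity tending to infinity with $s$, and smashing with $Hk$ preserves these connectivity bounds. Consequently the Tot towers of interest enjoy a Mittag--Leffler property in each fixed degree. Because every partial totalization $\Tot_n$ is a finite homotopy limit and thus commutes with $\wedge Hk$, the map $P$ is an equivalence at each finite stage; the Mittag--Leffler estimate then upgrades this to a $\pi_*$-isomorphism on the full totalizations. The care needed here is that smash products and homotopy limits do not commute in general, so the simple connectedness hypothesis is doing real work. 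Once $P$ is established as a homotopy equivalence, Corollary \ref{cor.conv.1} delivers complete convergence of the coB\"okstedt spectral sequence to $H_*(\coTHH(\Sigma_+^\infty X); k)$, and ingredient (i) identifies this with $H_*(\mathcal{L}X; k)$ as claimed.
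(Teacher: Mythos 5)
Your proposal is correct and follows essentially the same route as the paper: identify the abutment via Malkiewich's equivalence $\Sigma_+^\infty \mathcal{L}X \simeq \coTHH(\Sigma_+^\infty X)$, observe that each finite stage $\Tot_n$ is a finite homotopy limit and hence commutes with $-\wedge Hk$, and use the increasing connectivity of the fibers of the Tot tower (again from Malkiewich, requiring simple connectivity) to pass to the full totalization. The only difference is one of precision: where you invoke a ``Mittag--Leffler estimate,'' the paper makes this step rigorous by showing the tower is pro-weakly equivalent to the constant tower, applying Bousfield's result to get a pro-isomorphism on homology, and then citing Bousfield--Kan to identify the inverse limits.
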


\begin{proof}
Since $X$ is simply connected, by \cite[2.22]{malkiewich}
$H_{*}(\coTHH(\Sigma_+^{\infty} X);k) \cong H_*(\mathcal{L}X; k).$   As in Corollary~\ref{cor.conv.1}, we need to show that there is a weak equivalence
\begin{equation}\label{TotCompare} \Tot (R\coTHH^\bullet(\Sigma_+^{\infty} X)) \wedge Hk \simeq \text{Tot}R(\coTHH^\bullet(\Sigma_+^{\infty} X) \wedge Hk).\end{equation}
First we consider the $n$th stage approximations:
    \[\Tot_n (R\coTHH^\bullet(\Sigma_+^{\infty} X)) \wedge Hk \simeq \text{Tot}_n R(\coTHH^\bullet(\Sigma_+^{\infty} X) \wedge Hk).\]   
    The $n$th stages are weakly equivalent because the derived $\Tot_n$ functor is a finite homotopy limit functor.  In spectra, this agrees with a finite homotopy colimit functor and hence the derived $\Tot_n$ commutes with smashing with $Hk$. Since these weak equivalences are compatible for all $n$, it follows that the inverse limits over $n$ of both towers are weakly equivalent.  On the right hand side, this inverse limit agrees with the right hand side of Equation \ref{TotCompare}. For the left hand sides to agree, we need to commute the inverse limit with $- \wedge Hk$.  This is possible here because of the properties of this specific tower.
    
    In~\cite[2.22]{malkiewich}, and in more detail in an {earlier version~\cite[4.6]{malkiewichv1}}, Malkiewich shows that the connectivity of the fibers of the tower $ \{ \Tot_n R(\coTHH^\bullet(\Sigma_+^{\infty} X)) \}$ tend to infinity, and hence there is a pro-weak equivalence between this tower and the constant tower given by $\{\Tot R(\coTHH^\bullet(\Sigma_+^{\infty} X))\}$.  That is, applying homotopy, $\pi_k$, produces a pro-isomorphism of groups for each $k$.  By~\cite[8.5]{bousfield}, it follows that there is a pro-isomorphism for each $l$ between the constant tower   $\{H_l\Tot(R\coTHH^\bullet(\Sigma_+^{\infty} X))\}$ and the tower $ \{ H_l\Tot_n R(\coTHH^\bullet(\Sigma_+^{\infty} X)) \}$.  It follows that there is a pro-weak equivalence 
    \[ \{\Tot(R\coTHH^\bullet(\Sigma_+^{\infty} X)) \wedge Hk \}  \xrightarrow{\simeq}  \{\Tot_n(R\coTHH^\bullet(\Sigma_+^{\infty} X)) \wedge Hk  \}.\]  
    By~\cite[III.3.1]{BK}, it follows that the inverse limits are weakly equivalent.  That is, there is a weak equivalence
     \[\Tot R(\coTHH^\bullet(\Sigma_+^{\infty} X)) \wedge Hk \xrightarrow{\simeq}  \lim_n(\Tot_n R(\coTHH^\bullet(\Sigma_+^{\infty} X)) \wedge Hk).\]
     This shows that $- \wedge Hk$ commutes with inverse limits here as desired.
\end{proof}

Our next result is to prove that the coB\"okstedt spectral sequence of Theorem \ref{thm:SpecSeq} is a spectral sequence of coalgebras.  We exploit this additional structure in Section \ref{sect:computations} to make calculations of topological coHochschild homology in nice cases.

 We first describe a functor $\Hom(-,C)\colon \Set^\op\to \comcoalg$, where $C$ is a cocommutative coalgebra. In particular this will give us a cosimplicial spectrum $\textup{Hom}(S^1_{\bullet},C)$ that agrees with the cosimplicial spectrum $\coTHH^{\bullet}(C)$ defined in Definition \ref{defn:coTHH}.

Let $X$ be a set and $C$ be a cocommutative coalgebra spectrum.  Let $\Hom(X,C)$ be the indexed smash product
\[ \bigwedge_{x\in X} C.\]
If $f\colon X\to Y$ is a map of sets, we define a map of spectra $\Hom(Y,C)\to \Hom(X,C)$ as the product over $Y$ of the component maps
\[ C\to \bigwedge_{x\in f^\inv(y)} C\]
given by iterated comultiplication over $f^\inv(y)$. Here by convention the smash product indexed on the empty set is $\mathbb{S}$, and the map to $C\to \mathbb{S}$ is the counit map. Since $C$ is cocommutative, this map doesn't depend on a choosen ordering for applying the comultiplications and for  each $X$, the comultiplication on $C$ extends to define a cocommutative comultiplication on $\Hom(X,C)$.  Hence $\Hom(-,C)$ is a functor $\Set^\op\to \comcoalg$.  If $X_*$ is a simplicial set, the composite 
\[ (\Delta^\op)^\op \xto{X_*^\op}\Set^\op \xto{\Hom(-,C)} \comcoalg\]
thus defines a cosimplicial cocommutative coalgebra spectrum.  

To obtain the cosimplicial coalgebra yielding $\coTHH(C)$, we take $X_*$ to be the simplicial circle $\Delta^1_{\bullet}/\partial\Delta^1_{\bullet}$.  Here $\Delta^1_{\bullet}$ is the simplicial set with $\Delta^1_n=\{x_0,\dotsc,x_{n+1}\}$ where $x_t$ is the function $[n]\to [1]$ so that the preimage of $1$ has order $t$.  The face and degeneracy  maps are given by:
\[ \delta_i(x_t)= \begin{cases} x_t & t\leq i\\ x_{t-1}& t>i \end{cases}\]
\[ \sigma_i(x_t)=\begin{cases}x_t & t\leq i\\ x_{t+1} & t>i\end{cases}\]
The $n$-simplices of $S^1_*$ are obtained by identifying $x_0$ and $x_{n+1}$ so that $S^1_n=\{x_0,\dotsc, x_n\}$.  Comparing the coface and codegeneracy maps in $\Hom(S^1_{\bullet}, C)$ and $\coTHH^\bullet(C)$ shows that the two cosimplicial spectra are the same.

\begin{thm}\label{sscoalgebra}
Let $C$ be a connected cocommutative coalgebra that is cofibrant as a spectrum. Then the Bousfield--Kan spectral sequence described in Theorem \ref{thm:SpecSeq} is a spectral sequence of $k$-coalgebras. In particular, for each $r>1$ there is a coproduct
\[
\psi\colon E_r^{**} \rightarrow E_r^{**} \otimes_{k} E_r^{**},
\]
and the differentials $d_r$ respect the coproduct. 
\begin{proof}

Below we will construct the coproduct using natural maps of spectral sequences
\[
E_r^{**} \xto{\ \nabla\ }  D_r^{**}  \xleftarrow{\ AW\ }   E_r^{**} \otimes_{k} E_r^{**}
\]
where the map $AW$ is an isomorphism for $r\geq 2$. 

Recall that the cosimplicial spectrum $\coTHH^\bullet(C)$ can be identified with the cosimplicial spectrum Hom($S^1_{\bullet}, C)$. Let $D_r^{**}$ denote the Bousfield--Kan spectral sequence for the cosimplicial spectrum $\Hom((S^1\amalg S^1)_{\bullet}, C)\sma Hk$. The codiagonal map $\nabla\colon S^1\amalg S^1\to S^1$ is a simplicial map and gives a map $\Hom(S^1_{\bullet},C)\to \Hom((S^1\amalg S^1)_{\bullet},C).$ The map $\nabla\colon  E_r^{**} \rightarrow D_r^{**}$ is induced by this codiagonal map. Let $X^{\bullet}$ denote the cosimplicial $Hk$-coalgebra spectrum $\coTHH(C)^{\bullet} \sma Hk$. Note that the cosimplicial spectrum Hom($(S^1\amalg S^1)_{\bullet}, C) \sma Hk$ is the cosimplicial spectrum $[n] \to X^n \sma_{Hk} X^n$. This is the diagonal cosimplicial spectrum associated to a bicosimplicial spectrum, $(p, q) \to X^p \sma_{Hk} X^q$. We will denote this diagonal cosimplicial spectrum by $\diag(X^{\bullet} \sma_{Hk} X^{\bullet}$). The spectral sequence $D_r^{* *}$ is given by the Tot tower for the Reedy fibrant replacement $R (\diag(X^{\bullet} \sma_{Hk} X^{\bullet}$)). 

Note that our cosimplicial spectra are in fact cosimplicial $Hk$-modules. We can apply the standard equivalence between $Hk$-modules and chain complexes of $k$-modules \cite{shipley-hz}. By hypothesis, our $Hk$-modules are connective and therefore can be replaced by non-negatively graded chain complexes of $k$-modules. By the Dold--Kan correspondence, non-negatively graded chain complexes of $k$-modules are equivalent to simplicial $k$-modules. This also holds on the level of diagram categories and therefore our cosimplicial $Hk$-module spectra can be identified as cosimplicial simplicial $k$-modules. This string of equivalences is weakly monoidal in the sense of \cite{schwede-shipley-equiv}.

The Bousfield--Kan results in the simplicial setting \cite{ BKpairings, BKquadrant} thus apply, giving us a map of spectral sequences
\[
E_r^{**} \otimes_k E_r^{**} \to D_r^{**}.
\] 
By Bousfield and Kan, on the $r=1$ page, this map is given by the Alexander--Whitney map:
\[
 E_1^{**} \otimes_{k} E_1^{**} = N^*( \pi_*X^{\bullet}) \otimes_{k} N^*(\pi_*X^{\bullet}) \rightarrow N^*( \pi_*X^{\bullet} \otimes_{k} \pi_*X^{\bullet}) =  D_1^{**}.
\] 

By the cosimplicial analog of the Eilenberg--Zilber theorem (see, for example, \cite{fresse}) this induces an isomorphism on cohomology 
\[
 \xymatrix{
H^*(E_1^{*,*}) \otimes_{k} H^*(E_1^{*,*}) \cong H^*(E_1^{*,*}\otimes_{k} E_1^{*,*}) \ar[r]^-{\cong}&  H^*(D_1^{*,*}) \cong D_{2}^{**} ,
 }
 \]
 where the first isomorphism is given by the K\"unneth theorem. 
 This composite is the map $
 E_{2}^{*,*} \otimes_{k} E_{2}^{*,*}\xto{\cong}  D_{2}^{**} .
$
 By induction and repeated application of the K\"unneth theorem, we get equivalences  
 $ E_{r}^{*,*} \otimes_{k} E_{r}^{*,*} \xto{\cong}  D_{r}^{**}$
for all $r>1$. The composite of $\nabla$ with the inverse of this isomorphism gives us the desired coproduct.
\end{proof}
\end{thm}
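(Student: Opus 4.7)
The plan is to produce the coproduct on $E_r$ by factoring it through an auxiliary spectral sequence built from a bicosimplicial smash power of $\coTHH^{\bullet}(C)\wedge Hk$, and then to show that the map from the tensor product of two copies of $E_r$ into the auxiliary spectral sequence is an isomorphism from the $E_2$-page onward. The cocommutativity of $C$ is what allows $\coTHH^{\bullet}(C)$ to live in cocommutative coalgebra spectra rather than merely in spectra, and the connectivity assumption is what makes the relevant K\"unneth isomorphism hold in the underlying category of $k$-modules after passing through Shipley's equivalence.

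First I would identify the cosimplicial spectrum $\coTHH^{\bullet}(C)$ with the cosimplicial cocommutative coalgebra $\Hom(S^1_{\bullet}, C)$ constructed before the statement of the theorem, using that $C$ is cocommutative. Since the construction $X\mapsto \Hom(X,C)$ is functorial on $\Set^{\op}$ and takes values in cocommutative coalgebra spectra, the simplicial codiagonal $\nabla \colon S^1\amalg S^1 \to S^1$ yields a map
\[
\coTHH^{\bullet}(C) \longrightarrow \Hom((S^1\amalg S^1)_{\bullet}, C)
\]
which, after smashing with $Hk$, can be identified with the map from the cosimplicial $Hk$-module $\coTHH^{\bullet}(C)\sma Hk$ to the diagonal $\diag(X^{\bullet}\sma_{Hk} X^{\bullet})$ of the evident bicosimplicial object $(p,q)\mapsto X^p\sma_{Hk} X^q$, where $X^{\bullet}=\coTHH^{\bullet}(C)\sma Hk$. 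Applying the Bousfield--Kan tower of a Reedy fibrant replacement to this map produces a morphism of spectral sequences $\nabla\colon E_r^{**} \to D_r^{**}$ for every $r$.

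Next I would construct a pairing map $E_r^{**}\otimes_k E_r^{**} \to D_r^{**}$. The cleanest way to do this is to replace all of the cosimplicial $Hk$-module spectra involved by cosimplicial simplicial $k$-modules via the zigzag of weakly monoidal Quillen equivalences between $Hk$-modules and chain complexes of $k$-modules (Shipley) and the Dold--Kan correspondence between non-negatively graded chain complexes and simplicial $k$-modules; the connectivity hypothesis on $C$ is used here to reduce to the non-negatively graded setting. In the simplicial $k$-module setting, the Bousfield--Kan pairing machinery for cosimplicial objects yields a natural map of spectral sequences $E_r^{**}\otimes_k E_r^{**} \to D_r^{**}$ whose effect on the $E_1$-page is the Alexander--Whitney map
\[
N^{*}(\pi_{*}X^{\bullet})\otimes_k N^{*}(\pi_{*}X^{\bullet}) \longrightarrow N^{*}(\pi_{*}X^{\bullet}\otimes_k \pi_{*}X^{\bullet}).
\]

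I would then invoke the cosimplicial Eilenberg--Zilber theorem together with the K\"unneth formula (valid because $k$ is a field) to conclude that this Alexander--Whitney map becomes an isomorphism on $E_2$, and inductively on all later pages. Composing the inverse of this isomorphism with $\nabla$ produces the desired coproduct $\psi\colon E_r^{**}\to E_r^{**}\otimes_k E_r^{**}$ for every $r\geq 2$, and compatibility with the differentials is automatic since both maps in the factorization are maps of spectral sequences. The main obstacle I anticipate is the bookkeeping needed to run the Bousfield--Kan pairing construction in the cosimplicial cocoalgebra setting; one has to be careful that the pairing is really being produced by the comultiplication on $C$ and not by some spurious monoid structure, and this is exactly what the functoriality of $\Hom(-,C)$ on $\Set^{\op}$ and the cocommutativity of $C$ guarantee.
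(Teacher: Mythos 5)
Your proposal is correct and follows essentially the same route as the paper's own proof: identifying $\coTHH^\bullet(C)$ with $\Hom(S^1_\bullet,C)$, using the codiagonal $S^1\amalg S^1\to S^1$ to map into the auxiliary spectral sequence of the bicosimplicial smash power, passing through Shipley's equivalence and Dold--Kan to invoke the Bousfield--Kan pairing, and then using Alexander--Whitney, Eilenberg--Zilber, and K\"unneth to invert the pairing map from $E_2$ onward. No substantive differences to report.
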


\section{Computational results}\label{sect:computations}

Now that we have developed the coB\"okstedt spectral sequence and its coalgebra structure, we use these structures to make computations of the homology of $\coTHH(C)$ for certain coalgebra spectra $C$. We also prove that in certain cases this spectral sequence collapses.

We first make  some  elementary computations of coHochschild homology, which we will later use as input to the coHH to coTHH spectral sequence described in Theorem \ref{thm:SpecSeq}. The coalgebras we consider are the underlying coalgebras of Hopf algebras, specifically, of exterior Hopf algebras, polynomial Hopf algebras and divided power Hopf algebras. As mentioned in Example \ref{ex:permutfriendlythings}, the exterior Hopf algebra and the divided power Hopf algebras give the free cocommutative coalgebras on graded $k$-modules concentrated in odd degree or even degree, respectively. Recall from Example \ref{ex:permutfriendlythings} that $\Lambda_{k}(y_1, y_2, \dotsc)$ denotes the exterior Hopf algebra on generators $y_i$ in odd degrees and that $\Gamma_{k}[x_1, x_2, \dotsc]$ denotes the divided power Hopf algebra on generators $x_i$ in even degrees. Let $k[w_1,w_2,\dotsc]$ denote the polynomial Hopf algebra on generators $w_i$ in even degree.  The coproduct is given by $\comult(w_i^j)=\sum_{k} \binom{j}{k}\,w_i^k\otimes w_i^{j-k}.$

In the following computations, we assume that all the Hopf algebras in question  only have finitely many generators in any given degree. We work over a field $k$. 
The following computation of coHochschild homology is the main result we use as input for our spectral sequence. 

\begin{prop}\label{cohhcomp}
Let the cogenerators $x_i$ be of even nonnegative 
 degree and let the cogenerators $y_i$ be of odd nonnegative degree.  We evaluate the coHochschild homology as a coalgebra for the following cases of free cocommutative coalgebras: 
\[
\coHH_*(\Gamma_{k}[x_1, x_2, \dotsc]) = \Gamma_{k}[x_1, x_2, \dotsc] \otimes \Lambda_{k}(z_1, z_2, \dotsc),
\]
where $\deg(z_i) = (1, \deg(x_i))$. 
\[
\coHH_*(\Lambda_{k}(y_1, y_2, \dotsc)) = \Lambda_{k}(y_1, y_2, \dotsc) \otimes k[w_1, w_2, \dotsc] 
\]
where $\deg(w_i) = (1, \deg(y_i))$.

\end{prop}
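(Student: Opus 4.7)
The plan is to upgrade the underlying module identification of Theorem \ref{thrm:HKRcofreedg} to an isomorphism of coalgebras. Proposition \ref{prop:abstractHKR} gives an isomorphism of cosimplicial cocommutative coalgebras
\[
\coTHH_\otimes^\bullet(S^c(X)) \cong S^c(\coTHH_\times^\bullet(X)),
\]
so $\coHH(S^c(X))$ carries a natural coalgebra structure induced by the cocommutativity of $S^c(X)$. For $X$ concentrated in even nonnegative degrees with basis $\{x_i\}$, we have $S^c(X) = \Gamma_k[x_1,x_2,\dotsc]$, and since $\Sigma^{-1}X$ sits in odd degrees, $S^c(\Sigma^{-1}X) = \Lambda_k(z_1,z_2,\dotsc)$ with $z_i$ corresponding to $\sigma^{-1}x_i$. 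By Lemma \ref{lem:symcofreeAndProducts}, the target coalgebra of the proposition is nothing but $S^c(X) \otimes S^c(\Sigma^{-1}X) \cong S^c(X \times \Sigma^{-1}X)$, equipped with its tensor product coalgebra structure, so the task reduces to showing that the quasi-isomorphism of Theorem \ref{thrm:HKRcofreedg} can be promoted to a coalgebra map.

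To verify this compatibility, I would follow the same inductive pattern as the proof of Theorem \ref{thrm:HKRcofreedg}. In the one-dimensional case, compute the coproducts of the explicit cocycle representatives from Lemma \ref{lem:HKROneGenerator} --- namely $x^n \in N^0$ and $\sum_{i=1}^n i\cdot x^{n-i}\otimes x^i \in N^1$, together with their odd-degree analogues --- and check by direct cosimplicial calculation that they satisfy the expected deconcatenation relations for $\Gamma_k[x]\otimes\Lambda_k(z)$. The extension to finitely many generators uses the natural isomorphism $\coTHH_\otimes^\bullet(S^c(X\oplus Y)) \cong \coTHH_\otimes^\bullet(S^c(X))\otimes \coTHH_\otimes^\bullet(S^c(Y))$ together with the fact that the dual Eilenberg--Zilber map is a map of coalgebras; the passage to arbitrary $X$ via the filtered colimit of Lemma \ref{lem:infDimCoalg} is then routine because the constructions involved commute with filtered colimits. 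The case of $\coHH(\Lambda_k(y_i))$ with $y_i$ in odd degree is handled analogously: now $\Sigma^{-1}Y$ lies in even degrees, so $S^c(\Sigma^{-1}Y) = \Gamma_k[\Sigma^{-1}y_i]$, and one identifies this with $k[w_1,w_2,\dotsc]$ via the standard basis that matches the specified binomial coproduct $\Delta(w_i^j) = \sum_k \binom{j}{k} w_i^k \otimes w_i^{j-k}$.

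The main obstacle is that $S^c$ is not a right Quillen functor and does not in general commute with $\Tot$, so the coalgebra structure cannot be transported through the totalization merely by naturality. The verification has to be carried out on explicit cocycle representatives, which reduces to a combinatorial match between the deconcatenation-type coproduct on $S^c(\coTHH_\times^\bullet(X))$ inherited from the cofree construction and the shuffle/divided-power coproduct on the target. Once this is pinned down in the one-generator case, the rest of the argument is essentially bookkeeping through tensor products and filtered colimits.
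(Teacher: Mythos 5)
Your strategy is genuinely different from the paper's: the paper never touches the HKR machinery here, but instead uses Doi's identification $\coHH_n(C)\cong\cotor^n_{C^e}(C,C)$, reduces to $\coHH(C)\cong C\otimes\cotor_C(k,k)$ via the coalgebra map $\nabla\colon C^e\to C$ and an induced injective resolution, and then computes $\cotor_C(k,k)$ by dualizing to $\tor^{C^\vee}(k,k)$ and quoting the classical Koszul computations $\tor^{k[x]}(k,k)\cong\Lambda_k(z)$ and $\tor^{\Lambda_k(y)}(k,k)\cong\Gamma_k[w]$. That route delivers the coalgebra structure for free (as the dual of a known Hopf algebra structure on Tor) and avoids any representative-level bookkeeping.

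There is, however, a genuine gap in your exterior case. Your guiding principle is that the target coalgebra is $\symcofree(X)\otimes\symcofree(\Sigma^{-1}X)$ with its cofree (deconcatenation/shuffle) coproduct. For $C=\Gamma_k[x_i]$ this happens to be consistent, since $\Lambda_k(z)$ is self-dual. But for $C=\Lambda_k(y_i)$ you would get $\symcofree(\Sigma^{-1}Y)=\Gamma_k[\Sigma^{-1}y_i]$, and your claim that this "identifies with $k[w_1,w_2,\dotsc]$ via the standard basis that matches the specified binomial coproduct" is false in positive characteristic: in $\Gamma_k[w]$ the only primitive among the $\gamma_j(w)$ is $\gamma_1(w)$, whereas in $k[w]$ with $\comult(w^j)=\sum_k\binom{j}{k}w^k\otimes w^{j-k}$ the elements $w^{p^r}$ are all primitive over $\Fp$, so the two are not isomorphic as coalgebras. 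The correct answer is $\Lambda_k(y_i)\otimes k[w_i]$ (dual to the divided power algebra), not $\Lambda_k(y_i)\otimes\Gamma_k[w_i]$; the binomial coefficients genuinely arise from the Alexander--Whitney/shuffle comparison and cannot be read off from the underlying-module HKR identification. This shows the coproduct on $\coHH$ of a cofree coalgebra is \emph{not} simply the cofree coproduct on $\symcofree(X)\otimes\symcofree(\Sigma^{-1}X)$, so the central premise of your reduction fails in one of the two cases; even in the divided-power case, the coproduct computation on the representatives $\sum_i i\,x^{n-i}\otimes x^i$ and the comultiplicativity of the dual Eilenberg--Zilber map are the entire content of the argument and are asserted rather than carried out.
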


Before proving the proposition, we recall some basic homological coalgebra. Consider any  coalgebra $C$  over a field $k$. Let $M$ be a right $C$-comodule and $N$ be a left $C$-comodule with corresponding maps $\rho_M\colon M \rightarrow M \otimes C$ and $\rho_N\colon N \rightarrow C \otimes N$. Recall that the cotensor product of $M$ and $N$ over $C$, $M \Box_C N$, is defined as the kernel of the map
\[
\rho_M \otimes 1 - 1 \otimes \rho_N\colon M \otimes N \to M \otimes C \otimes N. \]
The Cotor functors are the derived functors of the cotensor product. In other words, $\cotor_C^n(M, N) = H^n(M \Box_C \bf{X})$ where $\bf{X}$ is an injective resolution of $N$ as a left $C$-comodule.

  Let $C^e = C \otimes C^{op}$. Let $N$ be a $(C, C)$-bicomodule. Then $N$ can be regarded as a right $C^e$-comodule. As shown in Doi \cite{doi}, $\coHH_n(N, C) = \cotor^n_{C^e}(N, C)$. 

\begin{lemma}

For $C = \Gamma_{k}[x_1, x_2, \ldots]$ or $C= \Lambda_{k}(y_1, y_2, \ldots)$,
\[\coHH(C) = C \otimes \cotor_C(k,k).\]
\end{lemma}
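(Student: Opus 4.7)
The plan is to reduce $\coHH(C) = \cotor^*_{C^e}(C, C)$ to $\cotor^*_C(k, k)$ by exploiting the Hopf algebra structure on $C$. Both $\Gamma_k[x_1, x_2, \dotsc]$ and $\Lambda_k(y_1, y_2, \dotsc)$ are the coalgebras underlying connected graded commutative cocommutative Hopf algebras over $k$ with antipodes $S$, so we may invoke this richer structure rather than just coalgebra axioms.

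The key tool is the shearing isomorphism of graded $k$-modules
\[\phi\colon C \otimes C \to C \otimes C, \quad a \otimes b \mapsto a\,b_{(1)} \otimes b_{(2)},\]
with inverse $a \otimes b \mapsto a\,S(b_{(1)}) \otimes b_{(2)}$. Using $\phi$, I would identify the natural bicomodule $C$ (regarded as a right $C^e$-comodule) with the bicomodule $C \otimes k$ whose left coaction is $\Delta$ and whose right coaction is trivial via the unit $\eta\colon k \to C$. This is the coalgebra dual of the classical observation that for a commutative Hopf algebra $H$, the $H$-bimodule $H$ is isomorphic to $H \otimes_k k$ as an $H^e$-module, which underlies the standard proof that $\HH_*(H) \cong H \otimes \tor_*^H(k, k)$.

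With that identification in hand, I would take an injective resolution $k \to I^\bullet$ of the trivial $C$-comodule $k$ and argue that $C \otimes I^\bullet$, equipped with the left $C$-coaction from $\Delta$ on the first factor and the right $C$-coaction transported along $\phi$, is an injective resolution of $C$ as a $C^e$-comodule. Applying $C \Box_{C^e} (-)$ and invoking the shearing isomorphism once more identifies the resulting complex with $C \otimes (k \Box_C I^\bullet)$, and taking cohomology gives
\[\coHH(C) \cong C \otimes \cotor^*_C(k, k),\]
as claimed.

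The main obstacle will be rigorously verifying that the shearing isomorphism interacts correctly with the bicomodule structures and with the cotensor product $\Box$, so that an injective $C$-comodule resolution of $k$ genuinely transfers to an injective $C^e$-comodule resolution of $C$. Both steps use the antipode $S$ in an essential way, so the argument ultimately rests on the Hopf algebra structure on $C$ rather than just its coalgebra structure; this is precisely what the hypotheses that $C = \Gamma_k[x_1, x_2, \dotsc]$ or $C = \Lambda_k(y_1, y_2, \dotsc)$ provide.
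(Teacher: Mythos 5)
Your proposal is correct and follows essentially the same route as the paper: the paper also reduces $\cotor_{C^e}(C,C)$ to $\cotor_C(k,k)$ by inducing an injective $C$-comodule resolution of $k$ up to a $C^e$-comodule resolution of $C$ via the coalgebra map $\nabla(c\otimes d)=cS(d)$, which is exactly the mechanism your shearing isomorphism encodes. The only presentational difference is that the paper makes the final use of cocommutativity explicit (so that $C_\nabla$ carries the trivial coaction) and defers the injectivity and freeness verifications to Doi, which is where your acknowledged ``main obstacle'' is discharged.
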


\begin{proof}
As above, $\coHH_n(C) = \cotor^n_{C^e}(C, C)$. Observe that $C$ is a Hopf algebra. Let $S$ denote the antipode in $C$. As in Doi \cite[Section 3.3]{doi}, we consider the coalgebra map
\[
\nabla\colon C^e \rightarrow C
\]
given by $\nabla(c \otimes d^{op}) = cS(d)$. Given a $(C, C)$-bicomodule $M$, let $M_{\nabla}$ denote $M$ viewed as a right $C$-comodule via the map $\nabla$. 

Let $k \rightarrow Y^0 \rightarrow Y^1 \rightarrow \cdots$ be an injective resolution of $k$ as a left $C$-comodule. When $C$ is a Hopf algebra, $(C^e)_{\nabla}$ is free as a right $C$-comodule, and  $(C^e)_{\nabla}\Box_C Y^n$ is injective as a left $C^e$-comodule \cite{doi}. The sequence  
\[
(C^e)_{\nabla}\Box_Ck \rightarrow (C^e)_{\nabla}\Box_C Y^0 \rightarrow(C^e)_{\nabla}\Box_C Y^1 \rightarrow \cdots
\]
is therefore an injective resolution of $(C^e)_{\nabla}\Box_Ck \cong C$ as a left $C^e$-comodule. Cotensoring over $C^e$ with $C$ we have:
\[
C \Box_{C^e}((C^e)_{\nabla}\Box_C Y^0) \rightarrow C \Box_{C^e}((C^e)_{\nabla}\Box_C Y^1) \rightarrow \cdots
\]
which gives:
\[
C_{\nabla}\Box_C Y^0 \rightarrow C_{\nabla}\Box_C Y^1 \rightarrow \cdots
\]
Therefore $\coHH_n(C) \cong \cotor^n_{C^e}(C, C) \cong \cotor^n_C(C_{\nabla}, k).$ In our cases $C$ is cocommutative, so $C_{\nabla}$ has the trivial $C$-comodule structure. Therefore $\cotor^n_C(C_{\nabla},k) \cong C \otimes \cotor^n_C(k, k).$ 
\end{proof}

We now want to compute $\cotor_C(k, k)$ for the coalgebras $C$ that we are interested in. 
\begin{prop}
For $C = \Gamma_{k}[x_1, x_2, \ldots]$, $\cotor_C(k,k) =\Lambda_{k}(z_1, z_2, \ldots)$ where $\deg(z_i) = (1, \deg(x_i))$. For $C = \Lambda_{k}(y_1, y_2, \ldots)$, $\cotor_C(k,k) = k[w_1, w_2, \ldots] $ where $\deg(w_i) = (1, \deg(y_i))$.
\end{prop}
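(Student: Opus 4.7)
The plan is to use a Künneth-style decomposition to reduce to the single-cogenerator case, then to construct explicit injective comodule resolutions of $k$ and compute $\cotor$ by cotensoring with $k$ and taking cohomology.

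First, for coalgebras $C$ and $D$ over a field, there is an isomorphism $\cotor_{C \otimes D}(k,k) \cong \cotor_C(k,k) \otimes \cotor_D(k,k)$: the tensor product of injective $C$- and $D$-comodule resolutions of $k$ is an injective $C\otimes D$-comodule resolution of $k$, and cotensoring with $k$ commutes with tensor products since all modules are flat over a field. Because $\Gamma_k[x_1,x_2,\dots] \cong \bigotimes_i \Gamma_k[x_i]$ and $\Lambda_k(y_1,y_2,\dots) \cong \bigotimes_i \Lambda_k(y_i)$ as coalgebras, and because we have only finitely many cogenerators in each degree, this reduces the computation to the single-cogenerator case in each bidegree.

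For $C = \Lambda_k(y)$ with $|y|$ odd, the plan is to exhibit the periodic injective $C$-comodule resolution of $k$ obtained by dualizing the standard periodic free resolution of $k$ over the graded dual algebra $\Lambda_k(y^*)$. Concretely, this has the form
\[ 0 \to k \to \Lambda_k(y) \to \Sigma^{|y|}\Lambda_k(y) \to \Sigma^{2|y|}\Lambda_k(y) \to \cdots, \]
with each differential given by the cofree $C$-comodule map $1 \mapsto y$. Cotensoring with $k$ annihilates the $y$-components, leaving the complex $\bigoplus_{n\ge 0} \Sigma^{n|y|}k$ with zero differential. Hence $\cotor^n_{\Lambda_k(y)}(k,k)$ is one-dimensional in internal degree $n|y|$, matching $k[w]$ with $|w|=(1,|y|)$ as a bigraded $k$-module.

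For $C = \Gamma_k[x]$ with $|x|$ even, the plan is to use the length-one injective resolution
\[ 0 \to k \to \Gamma_k[x] \xrightarrow{\partial} \Sigma^{|x|}\Gamma_k[x] \to 0, \]
where $\partial(\gamma_n(x)) = \gamma_{n-1}(x)$ (in the shifted internal degree); this is dual to the two-term free resolution of $k$ over the polynomial ring $k[t]$ given by multiplication by $t$. Cotensoring with $k$ yields the complex $k \xrightarrow{0} \Sigma^{|x|}k$, whose cohomology is one-dimensional in bidegrees $(0,0)$ and $(1,|x|)$ and zero elsewhere, matching $\Lambda_k(z)$ with $|z|=(1,|x|)$. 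The main obstacle is verifying that these explicit maps are honest $C$-comodule morphisms forming exact sequences into cofree comodules; once the resolutions are in place the computation is immediate, and the bigraded bookkeeping—keeping homological (resolution) degree and internal degree straight—requires care but is otherwise routine.
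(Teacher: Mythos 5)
Your route is genuinely different from the paper's: you build explicit injective (cofree) comodule resolutions for the one\--cogenerator coalgebras and assemble the general case via a K\"unneth decomposition, whereas the paper observes that $C$ is degreewise finite dimensional, so that $\cotor_C(k,k)$ is the Hopf algebra dual of $\tor^A(k,k)$ for $A$ the dual Hopf algebra ($A=k[x_1,x_2,\dots]$, resp.\ $A=\Lambda_k(y_1,y_2,\dots)$), and then quotes the classical computations $\tor^{k[x]}(k,k)\cong\Lambda_k(z)$ and $\tor^{\Lambda_k(y)}(k,k)\cong\Gamma_k[w]$. Your two resolutions are correct: the periodic cofree resolution of $k$ over $\Lambda_k(y)$ and the length-one resolution $0\to k\to\Gamma_k[x]\to\Sigma^{\deg(x)}\Gamma_k[x]\to 0$ are exact sequences of comodule maps into cofree comodules, and cotensoring with $k$ gives the stated bigraded vector spaces with zero differentials.

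There is, however, a gap: your argument identifies $\cotor_C(k,k)$ only as a bigraded $k$-module (you say as much yourself), while this proposition feeds into Proposition \ref{cohhcomp}, which asserts an identification \emph{as coalgebras}; indeed the entire content of writing the answer as the polynomial Hopf algebra $k[w_1,w_2,\dots]$ rather than the divided power Hopf algebra $\Gamma_k[w_1,w_2,\dots]$ is invisible additively. In characteristic $p>0$ these two have the same underlying bigraded module but are not isomorphic as coalgebras (compare $\comult(\gamma_2(w))=\gamma_2(w)\otimes 1+\gamma_1(w)\otimes\gamma_1(w)+1\otimes\gamma_2(w)$ with $\comult(w^2)=w^2\otimes 1+2\,w\otimes w+1\otimes w^2$), so your periodic resolution for $C=\Lambda_k(y)$ cannot by itself decide which of the two $\cotor_C(k,k)$ is. To close this you would need to compute the coproduct on $\cotor$ induced by $\comult\colon C\to C\otimes C$ together with your K\"unneth isomorphism (which should itself be recorded as an isomorphism of coalgebras), or else pass to the dual algebra as the paper does, where the classical statement $\tor^{\Lambda_k(y)}(k,k)\cong\Gamma_k[w]$ as Hopf algebras dualizes to exactly the polynomial Hopf algebra. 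For $C=\Gamma_k[x]$ the issue is harmless, since a connected coalgebra with underlying module $\Lambda_k(z)$ with $z$ in cohomological degree $1$ has its coproduct forced; the $\Lambda_k(y)$ case is precisely where the multiplicative structure carries content.
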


\begin{proof}
Note that in both cases $C$ is a cocommutative coalgebra that in each fixed degree is a free and finitely generated  $k$-module. Let $A$ denote the Hopf algebra dual of $C$. As in \cite{neisendorfer} we conclude that $\cotor_C(k, k)$ is a Hopf algebra, which is dual to the Hopf algebra $\tor^A(k,k)$. 

Recall that the Hopf algebra dual of $\Gamma_{k}[x_1, x_2, \dotsc]$ is $k[x_1, x_2, \dotsc]$ and the Hopf algebra $\Lambda_{k}(y_1, y_2, \dotsc)$ is self dual. We recall the classical results:
\begin{align*}
\tor^{k[x_1, x_2, \ldots]}(k,k) &\cong \Lambda_{k}(z_1, z_2, \ldots),\\
\tor^{\Lambda_{k}(y_1, y_2, \ldots)}(k,k) &\cong \Gamma_{k}[w_1, w_2, \ldots]
\end{align*}
where the degree of $z_i$ is $(1, \deg(x_i))$ and the degree of $w_i$ is  $(1, \deg(y_i))$. The result follows. 
\end{proof}

We have now proven Proposition \ref{cohhcomp}.  These results provide the input for the coB\"okstedt spectral sequence.  They are the starting point for the following theorem, which says that this spectral sequence collapses at $E_2$ in the case where the homology of the input is cofree on cogenerators of even nonnegative degree.

\begin{thrm}\label{comp1} Let $C$ be a cocommutative coassociative coalgebra spectrum that is cofibrant as a spectrum, and whose homology coalgebra is  
$$H_*(C;k) = \Gamma_{k}[x_1,x_2,\dots],$$ 
where the $x_i$ are cogenerators in nonnegative even degrees and there are only finitely many cogenerators in each degree. Then the coB\"okstedt spectral sequence for $C$ collapses at $E_2$, and 

\[ E_2\cong E_{\infty} \cong \Gamma_{k}[x_1, x_2, \dots]   \otimes \Lambda_{k}(z_1, z_2, \dots) ,\]
with $x_i$ in degree $(0, \deg(x_i))$ and $z_i$ in degree $(1, \deg(x_i))$.

\end{thrm}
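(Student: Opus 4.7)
The proof proceeds by first identifying the $E_2$-page as a coalgebra and then using the coderivation structure of the differentials to force them all to vanish.

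By Theorem \ref{thm:SpecSeq}, the coB\"okstedt spectral sequence for $C$ has
\[ E_2^{s,t} = \coHH_{s,t}^{k}(H_*(C;k)) = \coHH_{s,t}^{k}(\Gamma_{k}[x_1,x_2,\dots]). \]
Proposition \ref{cohhcomp} identifies this, as a bigraded $k$-coalgebra, with
\[ \Gamma_{k}[x_1,x_2,\dots] \otimes \Lambda_{k}(z_1,z_2,\dots), \]
where $x_i$ sits in bidegree $(0, \deg(x_i))$ and $z_i$ sits in bidegree $(1, \deg(x_i))$. Taking the total degree to be $t-s$, each $x_i$ has even total degree and each $z_i$ has odd total degree. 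Then Example \ref{ex:permutfriendlythings} combined with Lemma \ref{lem:symcofreeAndProducts} shows $E_2 \cong \symcofree(V)$ where $V = k\{x_1, x_2, \dots\} \oplus k\{z_1, z_2, \dots\}$. Crucially, $V$ is concentrated in cosimplicial degrees $s \in \{0, 1\}$.

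By Theorem \ref{sscoalgebra}, each $d_r$ for $r \geq 2$ is a coderivation on the coalgebra $E_r$. The universal property of $\symcofree$ implies that a coderivation $D$ on $\symcofree(V)$ is determined by its composition with the projection $\pi \colon \symcofree(V) \to V$ onto cogenerators; in particular, $\pi \circ D = 0$ forces $D = 0$. Since $d_r\colon E_r^{s,t} \to E_r^{s+r, t+r-1}$ raises cohomological degree by $r \geq 2$, its image lies in $s$-bidegree $\geq 2$, whereas $V$ lies in $s$-bidegree $\leq 1$. Hence $\pi \circ d_2 = 0$, so $d_2 = 0$ and $E_3 \cong E_2$ as coalgebras. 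Iterating yields $d_r = 0$ and $E_{r+1} \cong E_2$ for every $r \geq 2$, so $E_\infty \cong E_2$, as claimed.

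The main obstacle is the compatibility of the cofree coalgebra structure with each page of the spectral sequence. This requires Proposition \ref{cohhcomp} identifying the $E_2$-page \emph{as a coalgebra} with $\symcofree(V)$, together with the inductive observation that once $d_r = 0$ the equality $E_{r+1} = E_r$ as coalgebras propagates cofreeness up the pages. An alternative, more concrete approach would compute $d_r$ directly on the primitive cogenerators $\{x_i, z_i\}$: by a bidegree count, the only primitives of $E_r$ live in $s \leq 1$, while $d_r$ of a primitive lands in $s \geq r \geq 2$, forcing $d_r$ to vanish on primitives; one then extends the conclusion to all of $E_r$ via the same coderivation principle.
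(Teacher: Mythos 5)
Your overall strategy---identify the $E_2$-page as a coalgebra whose cogenerators sit in cosimplicial degrees $s\le 1$, then use the fact that a coderivation killed by the projection to cogenerators must vanish, so that the bidegree $(r,r-1)$ of $d_r$ forces $d_r=0$---is exactly the strategy of the paper's proof. However, there is a genuine gap at the step ``$E_2\cong \symcofree(V)$.'' When $\cha(k)=2$, the exterior coalgebra $\Lambda_k(z_1,z_2,\dots)$ on the odd-total-degree classes $z_i$ is \emph{not} the cofree cocommutative conilpotent coalgebra on $k\{z_1,z_2,\dots\}$: in characteristic $2$ the sign in the $\Sigma_2$-action disappears, so $\bigl((k\{z\})^{\otimes 2}\bigr)^{\Sigma_2}\neq 0$ and the cofree cocommutative coalgebra on odd cogenerators is the divided power coalgebra $\Gamma_k$, not $\Lambda_k$ (see Example \ref{ex:permutfriendlythings}, which explicitly assumes $\cha(k)\neq 2$ for the exterior identification). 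Hence in characteristic $2$ the $E_2$-page is not cofree, the universal property of $\symcofree$ you invoke is unavailable, and your argument as written only establishes the theorem for $\cha(k)\neq 2$, whereas the statement is over an arbitrary field.

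The paper repairs exactly this point by isolating the weaker hypothesis it actually needs: a conilpotent coalgebra $C$ is \emph{cogenerated} by a surjection $\pi\colon C\to Y$ if the composite $C\xto{(\comult^n)_n}\cofree(C)\xto{\cofree(\pi)}\cofree(Y)$ is injective. It proves that under this hypothesis alone any coderivation $d$ with $\pi d=0$ vanishes, and then checks that $\Gamma_k[x_1,x_2,\dots]\otimes\Lambda_k(z_1,z_2,\dots)$ is cogenerated by $\{x_1,x_2,\dots,z_1,z_2,\dots\}$ in every characteristic (in characteristic $2$ it is a proper subcoalgebra of the cofree coalgebra, but the required injectivity still holds). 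If you add this verification, your degree count goes through verbatim. Your proposed alternative via primitives is essentially a variant of the same argument and needs the same care: you must identify the primitives of $\Gamma_k\otimes\Lambda_k$ itself rather than of a cofree coalgebra, and you must justify that $d_r$ carries the coaugmentation filtration correctly so that images of low-degree elements are primitive.
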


\begin{proof}
By Proposition \ref{cohhcomp}, 
\[\coHH_*(\Gamma_{k}[x_1, x_2, \dots] ) = \Gamma_{k}[x_1, x_2, \dots]  \otimes \Lambda_{k}(z_1, z_2, \dots).\]

A conilpotent coalgebra $C$ in graded $k$-modules is called \emph{cogenerated} by $Y$ if there is a surjection $\pi \colon C \ra Y$ such that
\[ C \xto{(\comult^{n})_{n}}  \cofree(C) \xto{\cofree(\pi)}  \cofree(Y)\]
is injective. Every cofree cocommutative coalgebra $\symcofree(X)$ is cogenerated by $X$ under the canonical projection $\pi\colon \symcofree(X) \to X$, since the composite
\[\symcofree(X) \xto{(\Delta^{n})_{n}}  \cofree(\symcofree(X)) \xto{\cofree(\pi)}  \cofree(X)\]
is the inclusion of $\symcofree(X)$ into $\cofree(X)$.

Let $p$ be the characteristic of $k$. If $p\neq 2$, $\coHH_*(\Gamma_{k}[x_1, x_2, \dots] )$ is the cofree cocommutative conilpotent coalgebra cogenerated by $\lbrace x_1, x_2, \dots, z_1,z_2, \dots \rbrace$ with respect to the total grading. If $p=2$, it is easy to verify that $\Gamma_{k}[x_1, x_2, \dots]  \otimes \Lambda_{k}(z_1, z_2, \dots)$ is cogenerated by $\lbrace x_1, x_2, \dots, z_1,z_2, \dots \rbrace$ as well.

If $C$ is cogenerated by $\pi\colon C \ra Y$, any coderivation $d \colon C \ra C$ is completely determined by $\pi d$: Since 
\[\comult^{n} d  = \sum_{i=1}^{n+1}C^{\otimes i-1} \otimes  d \otimes C^{\otimes n+1-i}\circ\comult^{n}\]
and hence 
\[ \cofree(\pi) (\comult^{n})_n d = (\sum_{i=1}^{n+1}\pi^{\otimes i-1} \otimes  \pi d \otimes \pi^{\otimes n+1-i}\comult^{n})_n,\]
the injectivity of $\cofree(\pi) (\comult^{n})_n$ yields that $\pi d$ determines $d$.
 In particular, if $\pi d =0$, this implies $d=0$.

We now return to the coB\"okstedt spectral sequence. Assume that for some $r \geq 2$ we already know that $d^2,\dotsc, d^{r-1}$ vanish and thus the pages $E_2 \cong \dotsb \cong E_{r-1}$ are as claimed. The differential $d^r$ has bidegree $(r, r-1)$. Since the cogenerators are of bidegrees $(0,t)$ and $(1,t)$ and $E^{s,t}_{r-1} =0$ if $s<0$, the cogenerators cannot be in the image of $d^r$ and we find that $\pi d^r =0$  if $ r\geq 2$. 
\end{proof}

Recall that for a space $X$ the suspension spectrum $\Sigma^{\infty} X$ is an $\mathbb{S}$-coalgebra. This provides a wealth of examples for which we can use the computational tools of Theorem \ref{comp1}. Before stating particular results, we discuss the motivation for studying topological coHochschild homology of suspension spectra. Recall the following theorem  from Malkiewich \cite{malkiewich} and Hess--Shipley \cite{hs.coTHH}:
\begin{thm}
For $X$ a simply connected space, 
\[
\THH(\Sigma^{\infty}_+(\Omega X)) \simeq \coTHH(\Sigma^{\infty}_+X)
\]
\end{thm}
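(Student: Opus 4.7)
The plan is to identify both sides with $\Sigma_+^\infty \mathcal{L}X$, the suspension spectrum of the free loop space of $X$, and thus obtain the equivalence by transitivity.

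First I would handle the coTHH side. This is essentially done in Example~\ref{ex:freeloopspace}: the cosimplicial space $X^\bullet$ with $X^n = X^{\times (n+1)}$, cofaces given by diagonals and codegeneracies given by collapse maps, totalizes to $\mathcal{L}X$ (see Bousfield, Example~4.2 of \cite{bousfield}). Applying $\Sigma_+^\infty$ levelwise yields the cosimplicial spectrum $\coTHH^\bullet(\Sigma_+^\infty X)$, and one must show that the canonical map $\Sigma_+^\infty \mathrm{Tot}(X^\bullet) \to \mathrm{Tot}(R(\Sigma_+^\infty X^\bullet))$ is a stable equivalence. This is exactly Proposition~2.22 of \cite{malkiewich}, where the simple connectivity hypothesis is used to ensure the connectivity of the fibers of the $\mathrm{Tot}$ tower grows without bound, so that $\Sigma_+^\infty$ commutes with the limit up to weak equivalence. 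Thus $\coTHH(\Sigma_+^\infty X) \simeq \Sigma_+^\infty \mathcal{L}X$.

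Next I would handle the THH side. For a topological monoid $G$, the simplicial spectrum $\THH^\bullet(\Sigma_+^\infty G)$ defining $\THH(\Sigma_+^\infty G)$ is levelwise $\Sigma_+^\infty G^{\times (n+1)}$ with faces and degeneracies induced by the multiplication and unit of $G$ in the cyclic order. Since $\Sigma_+^\infty$ is a left adjoint and so commutes with geometric realization, we obtain $\THH(\Sigma_+^\infty G) \simeq \Sigma_+^\infty |B^{cy}G|$, where $B^{cy}G$ is the cyclic bar construction. The standard identification (see e.g.\ Goodwillie) gives $|B^{cy}G| \simeq \mathcal{L}BG$ whenever $G$ is a grouplike topological monoid. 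Specializing to $G = \Omega X$, we have $B\Omega X \simeq X$ when $X$ is simply connected, so $\mathcal{L}B\Omega X \simeq \mathcal{L}X$. Hence $\THH(\Sigma_+^\infty \Omega X) \simeq \Sigma_+^\infty \mathcal{L}X$.

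Combining these two equivalences produces the stable equivalence
\[
\THH(\Sigma_+^\infty \Omega X) \simeq \Sigma_+^\infty \mathcal{L}X \simeq \coTHH(\Sigma_+^\infty X).
\]
The main obstacle is the coTHH half: passing $\Sigma_+^\infty$ through the homotopy limit over $\Delta$ requires controlling the connectivity of the matching fibers of the cosimplicial replacement and invoking the pro-equivalence argument of Malkiewich. The THH half is comparatively routine because geometric realization is a colimit, so $\Sigma_+^\infty$ commutes with it on the nose, and the equivalence $|B^{cy}G| \simeq \mathcal{L}BG$ is classical. A subsidiary point worth checking is that both equivalences are compatible with the natural $S^1$-action arising from the cyclic structure, though this is not required for the statement as given.
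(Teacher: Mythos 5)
Your proof is correct and is essentially the argument the paper itself assembles: the paper does not prove this theorem directly but recalls it from Malkiewich and Hess--Shipley, and its own discussion (Example \ref{ex:freeloopspace} for the identification $\coTHH(\Sigma_+^\infty X)\simeq\Sigma_+^\infty\mathcal{L}X$, together with the recollection of B\"okstedt--Waldhausen's $\THH(\Sigma_+^\infty\Omega X)\simeq\Sigma_+^\infty\mathcal{L}X$ in Section \ref{sect:computations}) supplies exactly the two identifications with $\Sigma_+^\infty\mathcal{L}X$ that you combine. The only technical points to attend to --- replacing $\Omega X$ by the Moore loop space so that the cyclic bar construction is defined on a strict monoid, and the connectivity/pro-equivalence argument needed to commute $\Sigma_+^\infty$ past $\Tot$ on the coTHH side --- are precisely the ones you flag, and both are handled in the cited sources.
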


Consequently, for a simply connected space $X$, to understand $\THH$ of $\Sigma^{\infty}_+(\Omega X)$, one can instead compute the $\coTHH$ of the suspension spectrum of $X$.  
In some cases this topological coHochschild homology computation is more accessible. In particular, to compute the homology  $H_*(\THH(\Sigma^{\infty}_+(\Omega X)); k)$ using the B\"okstedt spectral sequence, the necessary input is the Hochschild homology of the algebra $H_*( \Sigma^{\infty}_+(\Omega X); k)$. To compute $H_*(\coTHH(\Sigma^{\infty}_+X); k)$ the spectral sequence input is the coHochschild homology of the coalgebra $H_*(\Sigma^{\infty}_+X; k)$. In some cases this homology coalgebra is easier to work with than the homology algebra of the loop space, making the topological coHochschild homology calculation a more accessible path to studying the topological Hochschild homology of based loop spaces. For instance, the Lie group calculations in Example \ref{Lie} are more approachable via the coTHH spectral sequence. Topological Hochschild homology of based loop spaces is of interest due to connections to algebraic $K$-theory, free loop spaces,  and string topology, which we will now recall.

Recall B\"okstedt and Waldhausen showed that $\THH(\Sigma^{\infty}_+(\Omega X)) \simeq \Sigma^{\infty}_+\mathcal{L} X$, where $\mathcal{L} X$ is the free loop space, $\mathcal{L} X = \Map(S^1, X)$. The homology of a free loop space, $H_*(\mathcal{L} X)$, is the main object of study in the field of string topology \cite{ChasSullivan, CJY}. Since 
\[\coTHH(\Sigma^{\infty}_+X)\simeq \Sigma^{\infty}_+\mathcal{L} X\]
for simply connected $X$, topological coHochschild homology gives a new way of approaching this homology.

Topological coHochschild homology of suspension spectra also has connections to algebraic $K$-theory. Waldhausen's $A$-theory of spaces $A(X)$ is equivalent to $K(\Sigma^{\infty}_+\Omega X)$. Recall that there is a trace map from algebraic $K$-theory to topological Hochschild homology
\[
A(X) \simeq K(\Sigma^{\infty}_+\Omega X) \rightarrow \THH(\Sigma^{\infty}_+\Omega X). 
\]
When $X$ is simply connected this gives us a trace map to topological coHochschild homology as well: 
\[
A(X) \simeq K(\Sigma^{\infty}_+\Omega X) \rightarrow \THH(\Sigma^{\infty}_+\Omega X) \simeq \coTHH(\Sigma^{\infty}_+X). 
\]
In the remainder of this section we will make some explicit computations of the homology of coTHH of suspension spectra. 
\begin{ex} Let  $X$ be a product of $n$ copies of $\mathbb{C}P^{\infty}$. As a coalgebra 
\[H_*(\Sigma^{\infty}_+X; k) \cong \Gamma_{k}(x_1, x_2, \ldots x_n)\]
 where $\deg(x_i) = 2$. By Theorem \ref{comp1}, there is an isomorphism of graded $k$-modules
\[
H_*(\coTHH(\Sigma^{\infty}_+X); k) \cong \Gamma_{k}(x_1, x_2, \ldots x_n) \otimes \Lambda_{k}(\hat{x}_1, \hat{x}_2, \ldots \hat{x}_n).
\] 
\end{ex}

Note that any space having cohomology with polynomial generators in even degrees will give us a suspension spectrum whose homology satisfies the conditions of Theorem \ref{comp1}. Hence this theorem allows us to compute the topological coHochschild homology of various suspension spectra, as in the example below. 

\begin{ex}\label{Lie}
Theorem \ref{comp1} directly yields the following computations of topological coHochschild homology.  The isomorphisms are of graded $k$-modules.
\begin{align*}
H_*(\coTHH(\Sigma^{\infty}_+BU(n)); k) &\cong  \Gamma_{k}(y_1, \ldots y_n) \otimes \Lambda_{k}(\hat{y}_1, \ldots \hat{y}_n) \\
&\qquad\qquad\abs{y_i} = 2i,\  \abs{\hat{y}_i} = 2i-1 \\[2ex]
H_*(\coTHH(\Sigma^{\infty}_+BSU(n)); k)& \cong  \Gamma_{k}(y_2, \ldots y_n) \otimes \Lambda_{k}(\hat{y}_2, \ldots \hat{y}_n) \\
& \qquad\qquad \abs{y_i} = 2i,\  \abs{\hat{y}_i} = 2i-1 \\[2ex]
H_*(\coTHH(\Sigma^{\infty}_+BSp(n)); k) &\cong \Gamma_{k}(z_1, \ldots z_n) \otimes \Lambda_{k}(\hat{z}_1, \ldots \hat{z}_n) \\
& \qquad\qquad\abs{z_i} = 4i,\  \abs{\hat{z}_i} = 4i-1 
\end{align*}

With some restrictions on the prime $p$, we also immediately get the mod $p$ homology of the topological coHochschild homology of the suspension spectra of the classifying spaces $BSO(2k), BG_2, BF_4,$  $BE_6, BE_7,$ and $BE_8$. Note that these computations also yield the homology of the free loop spaces $H_*(\mathcal{L}BG; \F_p)$ and the homology $H_*(\THH(\Sigma^{\infty}_+G); \F_p)$, for $G = U(n), SU(n), Sp(n), SO(2k),$ $G_2, F_4. E_6, E_7,$ and $E_8$. The topological coHochschild homology calculations also follow directly for products of copies of any of these classifying spaces $BG$.  
\end{ex}

\appendix
\section{Proof of Lemma \ref{lem:ReedyFibSurj}}\label{appendix:yuckyproof}
We prove the assertion of Lemma \ref{lem:ReedyFibSurj}.  This shows that $\coTHH^\bullet(C)$ is Reedy fibrant when $C$ is a counital coaugmented coalgebras in the category of graded $k$-modules for a commutative ring $k$.  
\begin{proof}[Proof of Lemma \ref{lem:ReedyFibSurj}]
Let $\eta\colon k\to C$ be the coaugmentation or the $k$-module map such that $\epsilon\eta=\id$.  Consider the maps $\eta_i\colon C^{\otimes n}\to C^{\otimes n+1}$ given by applying $\eta$ between the $i+1$ and $i+2$ copy of $C$; that is 
\[ \eta_i=\id^{\otimes i+1}\otimes \eta \otimes \id^{\otimes n-i-1}. \]
The indexing has been chosen to match the indexing on the codegeneracy maps $\sigma_i$ in the sense that  $\sigma_i\eta_i=\id$.  Direct calculation yields the following additional relations among the $\eta_i$ maps and codegeneracies:
\[\begin{cases}
\eta_j\eta_i=\eta_i\eta_{j-1}  & i<j\\
\sigma_j\eta_i = \eta_i\sigma_{j-1} & i<j\\
\sigma_i\eta_j =\eta_{j-1}\sigma_i & i<j
\end{cases}\]
The last two relations in particular imply that $\sigma_{i+1}\eta_i=\eta_i\sigma_i=\sigma_i\eta_{i+1}$.  To get a sense for these operations, note that if $c_0\otimes\dotsb\otimes c_n$ is a simple tensor, then
\[\eta_i\sigma_i(c_0\otimes \dotsb \otimes c_n)= c_0\otimes \dotsb \otimes c_{i}\otimes \eta\epsilon(c_{i+1})\otimes c_{i+1}\otimes \dotsb \otimes c_n\]
so that $\eta_i\sigma_i$ replaces the $i+1$th tensor factor $c_{i+1}$ with the result of sending it to $k$ via the counit and then mapping back up via the coaugmentation.

From Hirschhorn, we know that the matching space $M_n(X^\bullet)$ for a cosimplicial graded $k$-module $X^\bullet$ is given by
\[M_n(X^\bullet)=\{(x_0,\dotsc,x_{n-1}) \in (X^{n-1})^{\times n}\mid \sigma_i(x_j)=\sigma_{j-1}(x_i) \text{ for } 0\leq i<j\leq n-1\}.\]
Let $X^\bullet=\coTHH(C)^\bullet$ and consider $(x_0,\dotsc, x_{n-1})\in M_n(X^\bullet)$. We define $y\in \coTHH(C)^{n}=C^{\otimes n+1}$ to be the following sum:
\[y= \sum_{i=0}^{n-1} \eta_i\left(x_i +\sum_{j=1}^i (-1)^j\sum_{0\leq l_1<\dotsb<l_j\leq i-1} \eta_{l_1}\sigma_{l_1}\dotsm\eta_{l_j}\sigma_{l_j}(x_i)\right).\]
That is, for each $i$, we apply $\eta_i$ to $x_i$ plus a signed sum over all tuples of distinct numbers between $0$ and $i-1$ of applying $\eta\epsilon$ to $x_i$ in the spots corresponding to these numbers. The is sign determined by the number of elements in a tuple.  (In fact, we can think of $x_i$ itself as corresponding to the ``empty tuple'' and so unify our description of these terms, but that seems more obfuscating than strictly necessary.) 

We show that $\sigma_s(y)=x_s$ for all $0\leq s\leq n-1$, so that the matching map applied to $y$ yields $(x_0,\dotsc,x_{n-1})$.

First, break $\sigma_s(y)$ into terms corresponding to $i<s$, $i=s$ and $i>s$. 
\begin{align*}
\sigma_s(y)&=\sum_{i=0}^{s-1}\sigma_s\eta_i\left(x_i+\sum_{j=1}^{i} (-1)^j\sum_{0\leq l_1<\dotsb<l_j\leq i-1} \eta_{l_1}\sigma_{l_1}\dotsm\eta_{l_j}\sigma_{l_j}(x_i)\right)\\
&+\sigma_s\eta_s\left(x_s+\sum_{j=1}^s (-1)^j\sum_{0\leq l_1<\dotsb<l_j\leq s-1} \eta_{l_1}\sigma_{l_1}\dotsm\eta_{l_j}\sigma_{l_j}(x_s)\right)\\
&+\sum_{i=s+1}^{n-1}\sigma_s\eta_i\left(x_i+\sum_{j=1}^i(-1)^j\sum_{0\leq l_1<\dotsb<l_j\leq i-1}\eta_{l_1}\sigma_{l_1}\dotsm\eta_{l_j}\sigma_{l_j}(x_i)\right)
\end{align*}
which by the relationship between $\sigma_s$ and $\eta_i$ in each case we can rewrite as 
\begin{align*}
\sigma_s(y)&=\sum_{i=0}^{s-1}\eta_i\sigma_{s-1}\left(x_i+\sum_{j=1}^{i} (-1)^j\sum_{0\leq l_1<\dotsb<l_j\leq i-1} \eta_{l_1}\sigma_{l_1}\dotsm\eta_{l_j}\sigma_{l_j}(x_i)\right)\\
&+\left(x_s+\sum_{j=1}^s (-1)^j\sum_{0\leq l_1<\dotsb<l_j\leq s-1} \eta_{l_1}\sigma_{l_1}\dotsm\eta_{l_j}\sigma_{l_j}(x_s)\right)\\
&+\sum_{i=s+1}^{n-1}\eta_{i-1}\sigma_s\left(x_i+\sum_{j=1}^i(-1)^j\sum_{0\leq l_1<\dotsb<l_j\leq i-1}\eta_{l_1}\sigma_{l_1}\dotsm\eta_{l_j}\sigma_{l_j}(x_i)\right)
\end{align*}
We will show that the terms with $i>s$ all vanish and that the terms with $i\leq s$ cancel to leave only  $x_s$.

Observe that whenever $l< s$, our relations plus the codegeneracy relations imply
\begin{equation}\label{keyrelation} \sigma_s\eta_l\sigma_l=\eta_l\sigma_{s-1}\sigma_l=\eta_l\sigma_l\sigma_s.\end{equation}
That is, $\sigma_s$ ``commutes with'' the operation $\eta_l\sigma_l$ for $l<s$.

We first prove that for each $i$, $s+1\leq i\leq n-1$, the sum
\begin{equation}\label{sumforlargei}\sigma_sx_i+\sigma_s(\sum_{j=1}^{i}(-1)^j\sum_{0\leq l_1<\dotsb<l_j\leq i-1} \eta_{l_1}\sigma_{l_1}\dotsm \eta_{l_j}\sigma_{l_j}(x_i))
\end{equation}
vanishes.  In fact, this follows purely from the relations on the $\eta$'s and $\sigma$'s and doesn't depend on $x_i$ at all and we will drop the $x_i$ and just consider the operation given by our sum of $\eta$'s and $\sigma$'s. 

For each tuple $0\leq l_1<\dotsb < l_j\leq i-1$, let $l_a$ be the entry with $l_a< s \leq l_{a+1}$.  The relation of Equation (\ref{keyrelation}) lets us move the $\sigma_s$ past all the $\eta$'s and $\sigma$'s until we get to the term indexed by $l_{a+1}$:
\[ \sigma_s\eta_{l_1}\sigma_{l_1}\dotsm\eta_{l_j}\sigma_{l_j} =\eta_{l_1}\sigma_{l_1}\dotsm\eta_{l_a}\sigma_{l_a}\sigma_s\eta_{l_{a+1}}\sigma_{l_{a+1}}\dotsm\eta_{l_j}\sigma_{l_j}\]

We then split the operation in the summation term of Equation \ref{sumforlargei} into two cases: the summation over tuples where $s=l_{a+1}$ and the summation over tuples where $s<l_{a+1}$.

If $l_{a+1}=s$, then $\sigma_s\eta_{l_{a+1}}\sigma_{l_{a+1}}= \sigma_s\eta_s\sigma_s=\sigma_s$, so these tuples yield the operation
\[ -\sigma_s +\sum_{j=2}^i\sum_{\substack{\text{$j$-tuples}\\ \text{containing $s$}}} (-1)^j \eta_{l_1}\sigma_{l_1}\dotsm\eta_{l_a}\sigma_{l_a}\sigma_s\eta_{l_{a+2}}\sigma_{l_{a+2}}\dotsb\eta_{l_j}\sigma_{l_j}\]
The tuples where $l_{a+1}>s$ yield the operation
\[ \sum_{j=1}^i\sum_{\substack{\text{$j$-tuples not}\\ \text{containing $s$}}} (-1)^j\eta_{l_1}\sigma_{l_1}\dotsb\eta_{l_a}\sigma_{l_a}\sigma_s\eta_{l_{a+1}}\sigma_{l_{a+1}}\dotsm\eta_{l_j}\sigma_{l_j}.\]
When we sum these two operations, all the terms cancel except $-\sigma_s$ because every $j$-tuple containing $s$ yields a $j-1$-tuple not containing $s$ after omitting the $s$.   Thus  
\[\sigma_s(\sum_{j=1}^{i}(-1)^j\sum_{0\leq l_1<\dotsb<l_j\leq i-1} \eta_{l_1}\sigma_{l_1}\dotsm \eta_{l_j}\sigma_{l_j}(x_i)) =-\sigma_s(x_i)
\]
and each term of $\sigma_s(y)$ coming from $i>s$ vanishes.

Our next task is to understand the terms of the form
\[\eta_i\sigma_{s-1}\left(x_i+\sum_{j=1}^i \sum_{0\leq l_1<\dotsb<l_j\leq i-1} \eta_{l_j}\sigma_{l_1}\dotsm \eta_{l_j}\sigma_{l_j}(x_i)\right)\]
when $i<s$.  Since in this case, each entry of the tuple is less than $s-1$, we use the relation in Equation (\ref{keyrelation}) to move the $\sigma_{s-1}$ all the way to the right:
\[\eta_i\sigma_{s-1}(x_i)+\sum_{j=1}^i \sum_{0\leq l_1<\dotsb<l_j\leq i-1} \eta_i\eta_{l_j}\sigma_{l_1}\dotsm \eta_{l_j}\sigma_{l_j}\sigma_{s-1}(x_i)
\]
Each entry of the tuple is also less than $i$, so the  additional relation $\eta_i\eta_l\sigma_l=\eta_l\eta_{i-1}\sigma_l=\eta_l\sigma_l\eta_{i}$  when $l<i$ allows us move the $\eta_i$ to the right and we obtain
\[\eta_i\sigma_{s-1}(x_i)+\sum_{j=1}^i \sum_{0\leq l_1<\dotsb<l_j\leq i-1} \eta_{l_j}\sigma_{l_1}\dotsm \eta_{l_j}\sigma_{l_j}\eta_i\sigma_{s-1}(x_i)
\]
and since $(x_0,\dotsc,x_{n-1})$ is in the matching space $M_n(X^\bullet)$, $\sigma_{s-1}(x_i)=\sigma_i(x_s)$.

Putting this all together we find:
\begin{align*}
\sigma_s(y)&=\sum_{i=0}^{s-1} \eta_i\sigma_i(x_s) +\sum_{j=1}^i \sum_{0\leq l_1<\dotsb<l_j\leq i-1} (-1)^j\eta_{l_1}\sigma_{l_1}\dotsm\eta_{l_j}\sigma_{l_j}\eta_i\sigma_{i}(x_s)\\
&\qquad + x_s +\sum_{j=1}^s \sum_{0\leq l_1<\dotsb<l_j\leq s-1}(-1)^j\eta_{l_1}\sigma_{l_1}\dotsm\eta_{l_j}\sigma_{l_j}(x_s)\\
&=\sum_{i=0}^{s-1}\sum_{j=1}^{i}\sum_{0\leq l_1<\dotsb <l_j\leq i-1}(-1)^j \eta_{l_1}\sigma_{l_1}\dotsm \eta_{l_j}\sigma_{l_j}\eta_i\sigma_i(x_s)\\
&\qquad +x_s+\sum_{j=2}^s\sum_{0\leq l_1<\dotsb<l_j\leq s-1} \eta_{l_1}\sigma_{l_1}\dotsm\eta_{l_j}\sigma_{l_j}(x_s)
\end{align*}
For fixed $i$, the terms in the first summation correspond to the tuples in the second summation whose final term is $l_j=i$.  Hence the two summations run over the same tuples but with different signs, so they cancel and $\sigma_s(y)=x_s$ as required.  
\end{proof}

\bibliography{coTHH}
\bibliographystyle{plain}

\end{document}